\theoremstyle{definition}
\newtheorem{thm}{Theorem}[section]
\newtheorem{lem}[thm]{Lemma}
\newtheorem{defin}[thm]{Definition}
\newtheorem{prop}[thm]{Proposition}
\newtheorem*{xrem}{Remark}
\numberwithin{equation}{section}
\newcommand{\subjclass}[1]{\bigskip\noindent\emph{2010 Mathematics Subject Classification:}\enspace#1}
\newcommand{\keywords}[1]{\noindent\emph{Keywords:}\enspace#1}
\begin{document}

%%%%% To ease editing, add:

\baselineskip=17pt

%%%%%%%%%%%%%%%%

\title{Existence and uniqueness of axially symmetric compressible subsonic jet impinging on an infinite wall}

\author{Jianfeng Cheng\\
Department of Mathematics, Sichuan University, \\
              Chengdu 610065, P. R. China\\
jianfengcheng@126.com\\
\\
Lili Du\\
Department of Mathematics, Sichuan University,\\
 Chengdu 610065, P. R. China\\
dulili@scu.edu.cn.\ \ \ corresponding author.\\
\\
Qin Zhang\\
Department of Mathematics, Chongqing Jiaotong University,\\
Chongqing 400074, P. R. China. \\
            zqcfl2626@gmail.com}

\date{}

\maketitle

%%%%%%%%

\begin{abstract}
This paper is concerned with the well-posedness theory of the impact
of a subsonic axially symmetric jet emerging from a semi-infinitely long nozzle, onto a
rigid wall. The fluid motion is described by the steady isentropic
Euler system. We showed that there exists a critical value $M_{cr}>0$,
if the given mass flux is less than $M_{cr}$, there exists a unique smooth subsonic axially symmetric jet
issuing from the given semi-infinitely long nozzle and hitting a given
uneven wall. The surface of the axially symmetric impinging jet is a free
boundary, which detaches from the edge of the nozzle smoothly. It is showed that a unique suitable
choice of the pressure difference between the chamber and the atmosphere guarantees the continuous fit condition of
the free boundary.
Moreover, the asymptotic behaviors and the decay properties of the
impinging jet and the free surface in downstream were also obtained.
The main results in this paper solved the open problem on the
well-posedness of the compressible axially symmetric impinging jet, which
has proposed by A. Friedman in Chapter 16 in \cite{FA2}. The key
ingredient of our proof is based on the variational method to the
quasilinear elliptic equation with the Bernoulli's type free boundaries.

\subjclass{Primary 76N10, 76G25; Secondary 35Q31, 35J25.}

\keywords{Existence and uniqueness; free streamline; compressible
Euler system; subsonic impinging flow.}
\end{abstract}

\def\aint{\dashint}

\def\arraystretch{2}
\def\eps{\varepsilon}

\def\s#1{\mathbb{#1}} % set
\def\t#1{\tilde{#1}} %new variables
\def\b#1{\overline{#1}}
\def\N{\mathcal{N}} %Nozzle
\def\M{\mathcal{M}} %Mach number
\def\R{{\mathbb{R}}}
\def\B{{\mathcal{B}}}
\def\BB{\mathfrak{B}}
\def\F{{\mathcal{F}}}
\def\G{{\Gamma}}
\def\ba{\begin{array}}
\def\ea{\end{array}}
\def\be{\begin{equation}}
\def\ee{\end{equation}}

\def\bes{\begin{mysubequations}}
\def\ees{\end{mysubequations}}

\def\cz#1{\|#1\|_{C^{0,\alpha}}}
\def\ca#1{\|#1\|_{C^{1,\alpha}}}
\def\cb#1{\|#1\|_{C^{2,\alpha}}}
\def\psir{\left|\frac{\nabla\psi}{r}\right|^2}
\def\lb#1{\|#1\|_{L^2}}
\def\ha#1{\|#1\|_{H^1}}
\def\hb#1{\|#1\|_{H^2}}
\def\th{\theta}
\def\Th{\Theta}
\def\cin{\subset\subset}
\def\Ld{\Lambda}
\def\ld{\lambda}
\def\l{\lambda}
\def\ol{{\Omega_L}}
\def\sla{{S_L^-}}
\def\slb{{S_L^+}}
\def\e{\varepsilon}
\def\C{\mathbf{C}} %%unit cylinder
\def\ra{\rightarrow}
\def\xra{\xrightarrow}
\def\g{\nabla}
\def\a{\alpha}
\def\b{\beta}
\def\d{\delta}
\def\th{\theta}
\def\fai{\varphi}
\def\O{\Omega}
\def\ol{{\Omega_L}}
\def\psirk{\left|\frac{\nabla\psi}{r+k}\right|^2}
\def\tO{\tilde{\Omega}}
\def\tu{\tilde{u}}
\def\tv{\tilde{v}}
\def\trho{\tilde{\rho}}
\def\W{\mathcal{W}}
\def\f{\frac}
\def\p{\partial}
\def\o{\omega}
\def\B{\mathcal{B}}
\def\H{\Theta}
\def\msS{\mathscr{S}}
\def\bq{\mathbf{q}}
\def\msE{\mathcal{E}}
\def\mfa{\mathfrak{a}}
\def\mfb{\mathfrak{b}}
\def\mfc{\mathfrak{c}}
\def\mfd{\mathfrak{d}}
\def\ff{\texthtbardotlessj}
\def\mcL{\mathcal{L}}
\def\mcR{\mathcal{R}}
\def\Div{\text{div}}

\section{Introduction}
\label{intro}
The problem of a compressible jet falling from a channel and
impacting on a wall is a fascinating one, with very practical
applications. The canonical problem is of interest in a number of
areas, such a flow is produced by the downwards-directed jet from a
vertical take-off aircraft spreading out over the ground, or by a
jet of water form a tap falling into a full sink. The monographs of
Birkhoff and Zarantonello in \cite{Birk}, Jacob in \cite{Ja},
Gurevich in \cite{Gu} and Milne-Thomson in \cite{MT} gave good
surveys of these flows.

The two-dimensional impinging jets have been considered by Helmholtz
and Kirchhoff in 1968. They constructed a solution to the steady
irrotational flows of ideal incompressible weightless fluid, bounded
by the walls and free streamlines. A first systematic well-posedness
result on the incompressible impinging jet was mentioned in Page
364 and Page 416 in \cite{FA1} that A. Friedman and L. Caffarelli
established the existence of the incompressible irrotational jet
issuing from a two-dimensional semi-infinite channel and impinging
on an infinite plate (see also in Chapter 16.3 in \cite{FA2}), see \cite{ACF1,ACF2,ACF3,CDW2,CDW3} in different settings.
Furthermore, A. Friedman investigated the compressible subsonic free surface flow theory
on the sharped charge jets in \cite{FA2} and proposed several open problems on the impinging jets
in two dimensions. The one is
$$\text{"\it{Problem (4)}. Do the same for the compressible case."}$$ In the recent work \cite{CDW}, the authors established
the existence result on the subsonic jets issuing from a convergent
nozzle and impact on a flat plate for some special atmospheric
pressure (Please see Figure \ref{fi0}).

\begin{center}\begin{figure*}
% Use the relevant command to insert your figure file.
% For example, with the graphicx package use
  \includegraphics[width=0.75\textwidth]{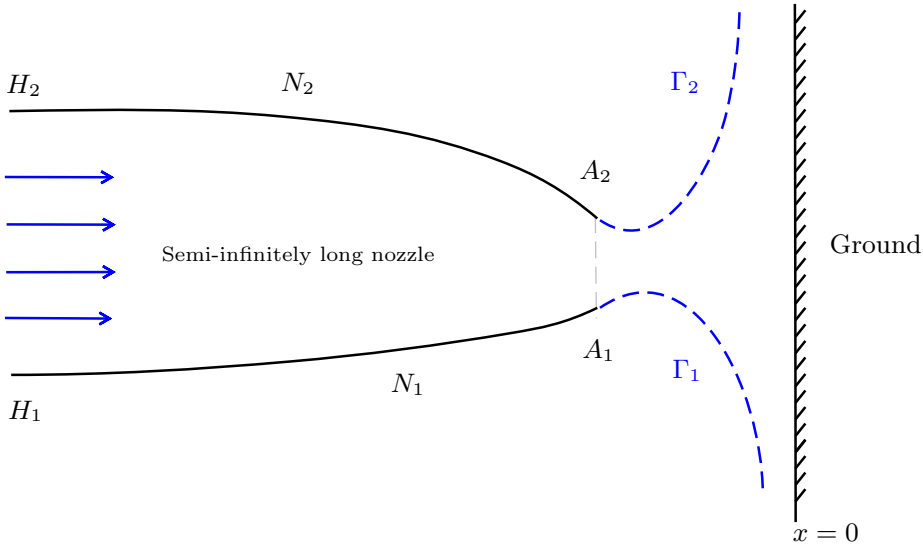}
% figure caption is below the figure
\caption{Subsonic impinging jet in two dimensions}
\label{fi0}       % Give a unique label
\end{figure*}\end{center}

As A. Friedman pointed out in \cite{FA2},  "$\cdots$ the
compressible axially symmetric case is quite open $\cdots$". In this
paper, we will focus on another open problem pointed out by A.
Friedman in \cite{FA2}:
$$\text{"\it{Problem (5)}. Extend the results to the axially
symmetric flows."}$$ We will establish the existence and uniqueness
of the compressible impinging jet in axially symmetric case, and
solve the open problem (5) pointed out by A. Friedman. Many numerical
simulations of the impact of a compressible flows from a cylinder on
a rigid wall are referred to \cite{GE,GY,KO,SW,Ve1}.
\begin{figure*}
% Use the relevant command to insert your figure file.
% For example, with the graphicx package use
  \includegraphics[width=0.75\textwidth]{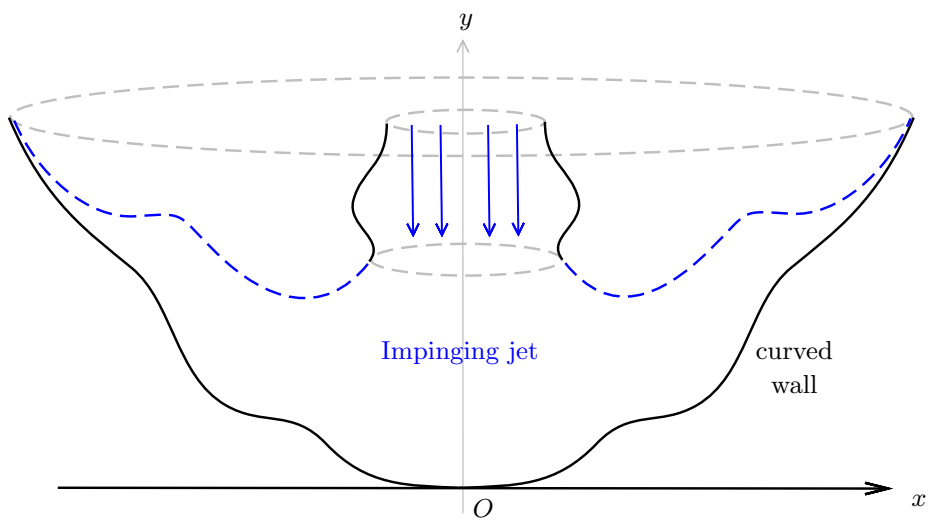}
% figure caption is below the figure
\caption{Subsonic axially symmetric impinging jet}
\label{fi2}       % Give a unique label
\end{figure*}

The present paper treats the compressible impinging jet problem
created by the impingement of a subsonic axially symmetric jet
emerging from a semi-infinitely long nozzle on a solid curved wall
(see Figure \ref{fi2}). The geometry considered here is a
semi-infinite nozzle in the form of a circular cylinder, in an
unbounded space. The infinite uneven wall is solid and undeformable.
The fluid is assumed to be steady, inviscid and irrotational
throughout, and the jet emerges from the orifice of the nozzle of
circular cross-section bounded by a stream surface, the nozzle wall
and the curved wall.

\subsection{Formulation of the physical problem}

The steady isentropic compressible flow is governed by the following
three-dimensional Euler system
\be\label{a01}\left\{\ba{rl} &\nabla\cdot(\rho U)=0,\\
&(\rho U\cdot\nabla)U+\nabla p=0,\ea\right.\ee with the irrotational
condition \be\label{a03}\nabla\times U=0.\ee Here, $U=(u_1,u_2,u_3)$
is the velocity, $\rho=\rho(x_1,x_2,x_3)$ is the density and
$p=p(\rho)$ denotes the pressure, $(x_1,x_2,x_3)\in\mathbb{R}^3$ is
the space variable. Without loss of generality, we assume that the
flow is perfect polytropic gas satisfying the $\gamma$-law
\be\label{a02}p=\mathcal{A}\rho^\gamma,\ee with $\mathcal{A}>0$ and
the adiabatic exponent $\gamma>1$. The sound speed of the flow is
defined as
$c(\rho)=\sqrt{p'(\rho)}=\sqrt{\mathcal{A}\gamma\rho^{\gamma-1}}$,
and the flow is subsonic if and only if $|U|<c(\rho)$.

Here, we consider the axially symmetric flow in this paper, and take
$y=x_3$ to be the axis of symmetry and $x=\sqrt{x_1^2+x^2_2}$.
 Let the fluid density and velocity be $\rho(x,y)$ and $u(x,y),v(x,y),w(x,y)$ in cylindrical coordinates, where $u,v,w$ are radial velocity, axially velocity and swirl velocity respectively. Furthermore, we look for such an axisymmetric flow without swirl in this paper, one has
$$u_1(x_1,x_2,x_3)=u(x,y)\f{x_1}{x},\ u_2(x_1,x_2,x_3)=u(x,y)\f{x_2}{x},\ u_3(x_1,x_2,x_3)=v(x,y).$$ Then the governing
system \eqref{a01} and \eqref{a03} are written in the cylindrical
coordinates as
\be\label{a04}\left\{\ba{rl}&(x\rho u)_x+(x\rho v)_y=0,\\
&(x\rho u^2)_x+(x\rho uv)_y+xp_x=0,\\
&(x\rho uv)_x+(x\rho v^2)_y+xp_y=0,\ea\right.\ee with the irrotational
condition \be\label{a05}u_y-v_x=0.\ee
\begin{figure*}
% Use the relevant command to insert your figure file.
% For example, with the graphicx package use
  \includegraphics[width=0.75\textwidth]{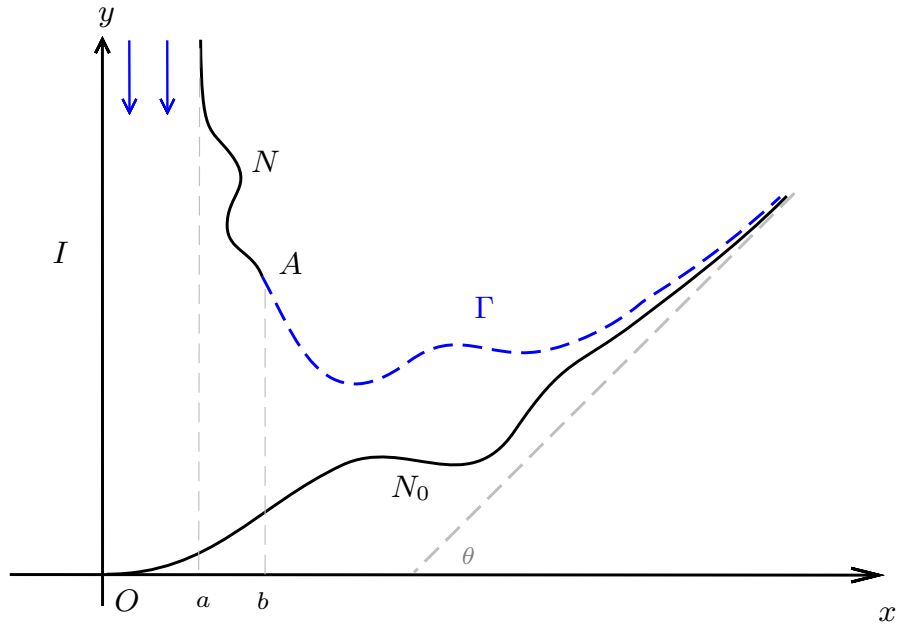}
% figure caption is below the figure
\caption{Axially symmetric impinging jet}
\label{fi3}       % Give a unique label
\end{figure*}

In order to clarify the physical problem, we start with the notation
and the assumptions on the geometry of the nozzle and the
impermeable wall as follows. As shown in Figure \ref{fi3}, we denote
the semi-infinite nozzle as \be\label{a06}N: y=g(x)\in
C^{2,\a}((a,b]),~ ~ g(b)=1~~\text{and}~~ \lim_{x\rightarrow
a^+}g(x)=+\infty, \ee~with~$A=(b,1)$ being the endpoint of the
nozzle, and $g(x)$ is decreasing in $(a,a+\e_0)$ for any small
$\e_0>0$. Denote the uneven wall as $N_0: ~y=g_0(x)$ for~~$x\geq0$
satisfying \be\label{a07} g_0(x)\in
C^{2,\a}([0,+\infty)),~~g_0(0)=0,~~ g(x)>g_0(x)\ \ \text{ for any
$x\in(a,b]$ },\ee and there exist a
$\theta\in\left[0,\f{\pi}{2}\right)$, and a $R_0>b$, such that
\be\label{a09}~~g_0'(x)\rightarrow \tan\theta\ \ \text{ as
$x\rightarrow+\infty$, and}\ \ g_0''(x)\geq 0\ \ \text{for any
$x>R_0$}.\ee

The boundary conditions require that the nozzle wall $N$ and the
uneven wall $N_0$ are assumed to be impermeable, thus
\be\label{a10}(u,v)\cdot\vec{n}=0~~~\text{on}~~N\cup N_0,\ee
 where $\vec{n}$ is the unit outward normal to $N\cup N_0$. We denote the incoming mass flux as $M_0$ in the axially symmetric nozzle, namely
\be\label{a11}-\int_{0}^{x_0} 2\pi x\rho(x,y_0) v(x,y_0)dx=M_0>0,\ee
for any $y_0\in(1,+\infty)$, where $x_0=\inf\{x\mid g(x)=y_0\}$.

The well-known Bernoulli's law gives that
\be\label{a12}\f{q^2}{2}+\f{\mathcal{A}\gamma}{\gamma-1}\rho^{\gamma-1}=\mathcal{B}
\ \text{ in the fluid field,}\ee where $q=\sqrt{u^2+v^2}$ is the
flow speed and $\mathcal{B}$ is the Bernoulli's constant.
%Furthermore, it is easy to check that there exist a critical speed
%$q_{cr}$, a critical density $\rho_{cr}$ and a maximal density
%$\rho_{max}$, such that the flow is subsonic if and only if
%$\sqrt{u^2+v^2}<q_{cr}$ or $\rho\in(\rho_{cr},\rho_{max})$, in which
%$$q_{cr}=\left(2B\f{\gamma-1}{\gamma+1}\right)^{\f{1}{2}},~~\rho_{cr}=\left(2B\f{\gamma-1}{\gamma+1}\right)^{\f{1}{\gamma-1}}~~~\text{and}
%~~\rho_{max}=\left(B(\gamma-1)\right)^{\f{1}{\gamma-1}}.$$

%After standard scaling, we can introduce the nondimensional velocity and density still denoted as $(u,v,\rho)$ such that $\rho_{cr}=1$ and $q_{cr}=1$, then the equation \eqref{a12} reduces to
%\be\label{a13}\f{q^2}{2}+\f{\rho^{\gamma-1}}{\gamma-1}=\f{\gamma+1}{2(\gamma-1)}.\ee

%This implies that the density $\rho$ can be represented  as a decreasing function of $q^2$ that
%\be\label{a14}\rho=\rho(q^2)=\left(\f{\gamma+1-(\gamma-1)q^2}{2}\right)^{\f{1}{\gamma-1}}.\ee
%Then, the fluid is subsonic if and only if $q<1$ or $\rho\in\left(1,\left(\f{\gamma+1}{2}\right)^{\f{1}{\gamma-1}}\right)$.

The free surface $\G$ is defined as an interface between the fluid
issuing from the nozzle wall $N$ and the fluid outside. And then
the fluid still satisfies the slip boundary condition on the free
surface $\G$. Moreover, the pressure on $\G$ balances to the
atmospheric pressure $p_{atm}$, and thus we assume that
\be\label{a15}p=p_{atm}~~\text{on}~~\G.\ee

%In this paper, we seek a compressible subsonic impinging jet, and a key point here is that we first impose an arbitrary atmospheric pressure $P_{\text{at}}$ with the subsonic condition

%\be\label{a16}P_{\text{at}}\in\left(\f{1}{\gamma},~\f{1}{\gamma}\left(\f{\gamma+1}{2}\right)^{\f{\gamma}{\gamma-1}}\right).\ee

Hence, we can formulate the compressible subsonic impinging jet problem into
the following free boundary problem (FBP).

\begin{defin}\label{def1}
{\bf The free boundary problem (FBP).}\\
 Given a semi-infinitely long nozzle wall $N$, an uneven wall $N_0$, for some appropriate incoming mass flux $M_0>0$, whether there exists a unique axially symmetric subsonic impinging jet flow,
 such that the free surface $\G$ detaches smoothly from the endpoint of the nozzle wall $N$,
 and goes to infinity in $x$-direction, and the pressure balances to the atmospheric pressure $p_{atm}$ on the free surface?
\end{defin}

Next, we give the definition of the subsonic solution to the FBP.

\begin{defin}
{\bf(A subsonic solution to the FBP).}\\
A vector $(u,v,\rho,\G)$ is called a subsonic solution to the FBP, provided that \\
(1) the free surface $\G$ is given by a $C^1$-smooth function
$y=k(x)$ for $x\in(b,+\infty)$ with \be\label{a18}k(b+0)=g(b-0)=1,~
~~k^{'}(b+0)=g'(b-0),\ee and$$ k'(x)\rightarrow \tan\theta,~~~
k(x)-g_0(x)\rightarrow0~~\text{as}~~ x\rightarrow+\infty.$$
(2)~$(u,v,\rho)\in~C^{1,\a}(\O_0)\cap C(\bar{\O}_0)$ solves the
compressible Euler system \eqref{a04} in $\O_0$, where
$\O_0$ is the flow field bounded by $N$, $N_0$, $I$ and $\G$;\\
(3)~$\sup_{(x,y)\in\bar{\O}_0}\f{\sqrt{u^2+v^2}}{c(\rho)}<1$ and $p=p_{atm}$ on $\G$.\\

\end{defin}

\begin{xrem}The conditions \eqref{a18} are so-called {\it continuous fit condition} and {\it smooth fit
condition} to the impinging jet, which imply that the free surface $\G$ initiates smoothly from the endpoint $A$ of the nozzle wall $N$.
\end{xrem}

\subsection{Main results} Before we state the main results in this paper, we would like to emphasize that the atmospheric pressure $p_{atm}$ is an arbitrary constant here. Once it is fixed, we found that there exists an interval $(p_1,p_2)$ for the constant pressure $p_{in}$ in the inlet, and then our results reveal that we can impose a unique $p_{in}\in(p_1,p_2)$ to guarantee the unique existence of the axially symmetric impinging jet. And the critical values $p_1$, $p_2$ depend on the atmospheric pressure $p_{atm}$ and the mass flux $M_0$, which can be determined uniquely by the following formulas,
$$\f{M^2_0}{2\pi^2a^4\left(\f{p_1}{\mathcal{A}}\right)^{\f2\gamma}}+\f{\mathcal{A}\gamma}{\gamma-1}\left(\f{p_1}{\mathcal{A}}\right)^{\f{\gamma-1}\gamma}
=\f{\mathcal{A}\gamma}{\gamma-1}\left(\f{p_{atm}}{\mathcal{A}}\right)^{\f{\gamma-1}{\gamma}}$$ and
$$\f{M^2_0}{2\pi^2a^4\left(\f{p_2}{\mathcal{A}}\right)^{\f2\gamma}}+\f{\mathcal{A}\gamma}{\gamma-1}\left(\f{p_2}{\mathcal{A}}\right)^{\f{\gamma-1}\gamma}
=\f{\mathcal{A}\gamma(\gamma+1)}{2(\gamma-1)}\left(\f{p_{atm}}{\mathcal{A}}\right)^{\f{\gamma-1}{\gamma}}.$$ Obviously, $p_2>p_1$ and the interval $(p_1,p_2)$ is well-defined.

The main results in this paper are stated as follows.
\begin{thm}\label{the1}
Assume that the semi-infinitely long nozzle wall $N$ and the
uneven wall $N_0$ satisfy the conditions \eqref{a06}-\eqref{a09},
for any given atmospheric pressure $p_{atm}>0$, then there exists a critical mass flux $M_{cr}$, such that for any $M_0\in(0,M_{cr})$, there exist a unique incoming pressure $p_{in}\in(p_1,p_2)$ and a
 unique subsonic solution $(u,v,\rho,\G)$ to the
FBP. Moreover, \\
(1)~the subsonic impinging jet flow satisfies the asymptotic
behavior in upstream as follows,
$$(u,v,\rho)(x,y)\rightarrow(0,v_{in},\rho_{in})~~~\text{and}~~\nabla(u,v,\rho)\rightarrow0,$$
uniformly in any compact subset of $(0,a)$ as $y\rightarrow+\infty$,
where $v_{in}=-\f{M_0}{\pi a^2\rho_{in}}$ and $\rho_{in}=\left(\f{p_{in}}{\mathcal{A}}\right)^{\f1\gamma}$.

Similarly, the subsonic impinging jet flow satisfies the asymptotic
behavior in downstream as follows,
$$(u,v,\rho)(x,y)\rightarrow\left(q_0\cos\th,q_0\sin\th,\rho_0\right),
 \ \ (x,y)\in\O_0,$$ as $x^2+y^2\rightarrow+\infty$,
where $\rho_0=\left(\f{p_{atm}}{\mathcal{A}}\right)^{\f1\gamma}$ and $q_0=\sqrt{v^2_{in}+\f{2\mathcal{A}\gamma}{\gamma-1}\left(\rho^{\gamma-1}_{in}-\rho_0^{\gamma-1}\right)}$;\\
(2)~the free boundary $\G$ converges to $N_0$ in the far field, and
\be\label{a20}x(k(x)-g_0(x))\rightarrow\f{M_0}{2\rho_0q_0\pi\cos\theta}~~~\text{as}~~x\rightarrow+\infty.\ee Furthermore, the free boundary $\Gamma$ is analytic;\\
(3)~the radial velocity $u>0$ in $\bar{\O}_0\setminus I$;\\
(4) $M_{cr}$ is
the upper critical value of mass flux for the existence of subsonic jet flow
in the following sense: either
$$\sup_{\bar\O_0}(u^2+v^2-c^2(\rho))\rightarrow 0 \ \ \text{as}\ \ M_0\rightarrow
M^-_{cr},$$ or there is no $\sigma>0$, such that for any
$M_0\in(M_{cr},M_{cr}+\sigma)$, there exists a subsonic solution to the
compressible jet flow problem and
$$\sup_{M_0\in(M_{cr},M_{cr}+\sigma)}\left(\sup_{\bar\O_0}(u^2+v^2-c^2(\rho))\right)<0.$$
\end{thm}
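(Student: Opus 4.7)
The plan is to reduce the free boundary problem for the Euler system to a single variational problem for a stream function $\psi$, then combine an Alt--Caffarelli-type analysis with a subsonic truncation and a continuity argument in the inlet pressure to obtain the unique solution.

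First I would introduce the stream function $\psi$ defined by $\psi_y=x\rho u$ and $\psi_x=-x\rho v$, which automatically encodes the continuity equation \eqref{a04}$_1$. By \eqref{a11} and \eqref{a10} we may normalize $\psi=0$ on the symmetry axis and $\psi=-M_0/(2\pi)$ on $N\cup\G\cup N_0$. The irrotationality \eqref{a05} together with Bernoulli's law \eqref{a12} then recasts the problem as the degenerate quasilinear elliptic equation
\be\label{planeq}
\left(\f{\psi_x}{x\rho}\right)_x+\left(\f{\psi_y}{x\rho}\right)_y=0,\qquad \rho=\rho\!\left(\f{|\g\psi|}{x}\right),
\ee
with the Bernoulli-type free boundary condition $|\g\psi|/x=\rho_0 q_0$ on $\G$, where $\rho_0,q_0$ are determined by $p_{atm}$ and $p_{in}$. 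The unknowns thus become the pair $(\psi,\G)$ together with the free parameter $p_{in}$.

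Next I would set up a family of approximating problems. Choose $L\gg 1$, truncate to a bounded domain $\ol$ obtained by intersecting the jet region with $\{y<L,\ x<L\}$, and subsonically truncate the density--speed relation so that \eqref{planeq} becomes uniformly elliptic. Prescribe Dirichlet data for $\psi$ on the artificial boundaries consistent with mass conservation. Following the Alt--Caffarelli variational framework adapted to the axial weight $x\,dx\,dy$, minimize a functional of the form
\be\label{planfunc}
J(\psi)=\int_{\ol}\left[F\!\left(\f{|\g\psi|^2}{x^2}\right)+\ld^2\chi_{\{\psi>-M_0/(2\pi)\}}\right]x\,dx\,dy,
\ee
where $F$ is the Legendre transform of the pressure in the subsonic regime and $\ld=\rho_0 q_0$ depends on $p_{in}$. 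Standard arguments then yield existence of a minimizer, non-oscillation, Lipschitz regularity up to the free boundary, and $C^{1,\a}$ (indeed analytic) regularity of $\G$ away from the nozzle tip $A$. I would also verify the slip condition on $N\cup N_0$ via reflection and prove $u>0$ inside $\ol$ using a maximum principle on the tangential derivative of $\psi$ along streamlines.

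The central new step is the \emph{continuous fit}: for a given $p_{in}\in(p_1,p_2)$, the endpoint of $\G$ hits the vertical line $x=b$ at some height $h(p_{in})$, and I would prove that $h$ depends continuously and monotonically on $p_{in}$, with $h\to\pm\infty$ (or with a strict sign change) as $p_{in}$ approaches $p_1,p_2$. Here $p_1$ is the sonic threshold and $p_2$ corresponds to the cavitation/zero-speed limit in the Bernoulli balance, which is exactly how the two formulas defining $p_1,p_2$ arise. An intermediate-value argument together with monotonicity then gives a unique $p_{in}$ for which $h(p_{in})=1$, which is \eqref{a18}$_1$; the \emph{smooth fit} \eqref{a18}$_2$ follows from the Bernoulli condition, the nozzle boundary condition and compatibility at $A$. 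I expect this continuity/monotonicity step to be the main obstacle, because the free boundary $\G$ and the parameter $p_{in}$ are coupled through a non-local dependence, the monotonicity requires a comparison principle for free boundaries of different subsonic minimizers, and the axial weight $x$ in \eqref{planeq} degenerates on $\{x=0\}$.

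Finally I would let $L\to\infty$ through uniform energy bounds and elliptic estimates to obtain a solution on the unbounded domain $\O_0$. The upstream asymptotics follow from the fact that $(0,v_{in},\rho_{in})$ is the unique uniform subsonic state in the cylinder of radius $a$ with mass flux $M_0$ and inlet pressure $p_{in}$, together with standard Liouville-type rigidity for \eqref{planeq} on strips. The downstream asymptotics, including the sharp decay rate \eqref{a20}, are obtained by blow-down analysis: rescale $(x,y)$ so that the inclined wall $N_0$ is straightened and use that $|\g\psi|/x=\rho_0 q_0$ on $\G$ together with mass conservation between $N_0$ and $\G$ to get the precise $M_0/(2\pi\rho_0 q_0 \cos\th)$ constant in \eqref{a20}, exactly as in \eqref{a20}. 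The subsonic truncation is finally removed by defining
$$M_{cr}=\sup\{M_0>0: \text{for every } M'_0\in(0,M_0) \text{ the constructed solution is strictly subsonic}\},$$
and verifying the dichotomy in part (4) by monotone dependence of the maximum Mach number on the mass flux.
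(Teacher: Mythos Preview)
Your overall architecture---stream function, subsonic truncation, Alt--Caffarelli minimization on truncated domains, continuous fit in the free parameter, passage to the unbounded limit---matches the paper's. But there are several points where your plan either contains an error or relies on facts the paper does not prove and instead circumvents.

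\textbf{Boundary normalization.} You set $\psi=-M_0/(2\pi)$ on $N\cup\Gamma\cup N_0$, but $N_0$ and $I$ form a single streamline (they meet at the origin), distinct from $N\cup\Gamma$. The correct normalization is $\psi=0$ on $N_0\cup I$ and $\psi=m_0$ on $N\cup\Gamma$; otherwise there is no fluid region between $\Gamma$ and $N_0$.

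\textbf{The functional.} A functional of the form \eqref{planfunc} is not finite on the unbounded flow domain, because along the downstream channel $|\nabla\psi|/x\to\lambda$ and the integrand does not decay. The paper adds a linear compensation $-2\lambda F_1(\lambda^2;\lambda)\nabla\psi\cdot e$ and replaces $\lambda^2$ by $\Lambda^2=2F_1(\lambda^2;\lambda)\lambda^2-F(\lambda^2;\lambda)$ inside the characteristic function, so that the integrand is pointwise bounded by $Cx\,|\nabla\psi/x-\lambda e|^2$ and hence integrable. This is what makes the $L\to\infty$ limit and the downstream decay estimate (your \eqref{a20}) go through.

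\textbf{Continuous fit and uniqueness of $p_{in}$.} You plan to prove that $h(p_{in})$ is \emph{monotone} in $p_{in}$; the paper does not establish this. It proves only continuity of the initial height in $\lambda$, obtains existence of a fitting $\lambda_\mu$ by the intermediate value theorem, and then proves uniqueness of $(\lambda,\psi_\lambda)$ a posteriori by a direct sliding/comparison argument on the \emph{limit} solutions (rescaling by the critical momentum $\Pi_\lambda$ and comparing the ratios $\lambda/\Pi_\lambda$). Your monotonicity route may be feasible but is not the one taken, and you should not regard it as routine. Also, your identification of $p_1,p_2$ is reversed: $p_1$ corresponds to $\lambda=0$ (stagnation on the free surface) and $p_2$ to the sonic threshold $\lambda=\lambda_{cr}$.

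\textbf{Critical mass flux.} This is the main gap. You propose to obtain the dichotomy in part~(4) from ``monotone dependence of the maximum Mach number on the mass flux''. The paper neither states nor proves such monotonicity, and it is not clear it holds. Instead, the paper runs a sequence of subsonic cut-offs at levels $\Pi_\lambda-\varepsilon_n$ with $\varepsilon_n\downarrow0$, defines $m_n$ as the supremal mass flux for which \emph{all} smaller mass fluxes admit a solution with margin $4\varepsilon_n$, shows the function $T_n(m)$ is left-continuous, and sets $m_{cr}=\lim m_n$. The dichotomy then follows from this nested construction rather than from any monotonicity in $M_0$. Your plan would need either a proof of the monotonicity you invoke or a replacement argument of this type.
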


\begin{xrem} In view of the statement (4) in Theorem \ref{the1}, we have that either
$$\sup_{\bar\O_0}(u^2+v^2-c^2(\rho))\rightarrow 0 \ \ \text{as}\ \ M_0\rightarrow
M^-_{cr},$$ or for any $M_0>M_{cr}$, there exists an incoming mass flux $\t M_0\in[M_{cr},M_0]$, such that there are no an incoming pressure $p_{in}\in(p_1,p_2)$ and a subsonic solution to the
compressible jet flow problem satisfying
$$\sup_{\bar\O_0}(u^2+v^2-c^2(\rho))<0.$$
\end{xrem}

\begin{xrem}\label{ref1} Our result indicates that the chamber pressure $p_{in}$ in the inlet is determined uniquely by the continuous fit condition, provided that the atmospheric pressure $p_{atm}$ is imposed. On another hand, the result implies that there exists a unique pressure difference between the chamber and the outside, such that there exists a unique axially symmetric impinging jet with continuous fit condition.
\end{xrem}

\begin{xrem} The result \eqref{a20} in fact gives the convergence rate of the distance between the free surface $\Gamma$ and the curved wall $N_0$ in the downstream, which is quite different from the two-dimensional case.
\end{xrem}

\begin{xrem} Here, we restrict the magnitude of the incoming mass flux $M_0$ to guarantee the global subsonicity of the impinging jet, this idea is motivated by the recent works \cite{CDX1,CDW4,DX,DXX,DXY,XX1,XX2,XX3} on the global subsonic flow in an infinitely long
nozzle. This is also quite different from the recent results on
two-dimensional subsonic impinging jets in \cite{CDW}.
\end{xrem}

Based on the significant work \cite{ACF6} by Alt, Caffarelli and
Friedman, we can obtain the higher regularity of the free boundary
near the end point $A$.
\begin{thm}\label{the2} If $N$ is $C^{3,\alpha}$ near $A$, then the solution $(u,v,\rho,\Gamma)$ established in Theorem \ref{the1} satisfies that either \\
(1). $N\cup\Gamma$ is $C^2$ at $A$ or,\\
(2). the optimal regularity of $N\cup\Gamma$ at $A$ is only $C^{1,\f12}$ and the curvature of $\Gamma$ goes to $\pm\infty$ as
$x\rightarrow b^+$.

\end{thm}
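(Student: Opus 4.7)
The plan is to reduce Theorem \ref{the2} to the dichotomy of Alt--Caffarelli--Friedman in \cite{ACF6} by localizing near the endpoint $A=(b,1)$ and flattening the nozzle wall. Recall that Theorem \ref{the1} already produces a solution satisfying the continuous and smooth fit conditions \eqref{a18}, so $N\cup\Gamma$ is a priori a $C^1$ curve at $A$; only the higher regularity is at stake. The jet will be analyzed via the stream function $\psi$ constructed in the existence proof, which satisfies a quasilinear, uniformly elliptic equation of the form $\mcL\psi=0$ in a one-sided neighborhood of $A$, with $\psi=0$ on both $N$ and $\Gamma$ and the Bernoulli condition $|\nabla\psi|/r=\lambda$ on $\Gamma$ (where $\lambda$ is determined by $p_{atm}$).

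The first step is to localize to a neighborhood $U$ of $A$ so small that $r$ is bounded below by a positive constant, the flow is strictly subsonic and $N\cap U$ is the graph $y=g(x)$ with $g\in C^{3,\alpha}$. In this regime the coefficients of $\mcL$, which depend smoothly on $\nabla\psi$ and $r$, are uniformly elliptic with coefficients as regular as $\nabla\psi$ allows, and the Bernoulli coefficient $\lambda r$ is a $C^{3,\alpha}$ function of the space variables. Next, I would apply a local $C^{3,\alpha}$ diffeomorphism $\Phi$ that straightens $N\cap U$ to a segment of a coordinate axis. Because $g$ is $C^{3,\alpha}$, $\Phi$ preserves the structure of the problem: the transformed equation is still quasilinear and uniformly elliptic with $C^{2,\alpha}$ principal part, the fixed boundary is flat, the free boundary $\Phi(\Gamma)$ meets it tangentially at $\Phi(A)$, and the Bernoulli-type condition becomes $|\nabla\psi|=Q(x,y)$ for a strictly positive $C^{2,\alpha}$ function $Q$.

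After this reduction the problem has precisely the structure handled in \cite{ACF6}: a one-phase Bernoulli-type free boundary meeting a smooth fixed boundary tangentially at a contact point. The dichotomy in \cite{ACF6} classifies the blow-up profile of $\psi$ at such a contact point via a homogeneous solution in a half-plane; it asserts that either the profile is regular, in which case the combined boundary is $C^2$ at the contact point, or the profile contains a singular mode of homogeneity $3/2$, in which case the free boundary is exactly $C^{1,1/2}$ and the curvature blows up as one approaches the contact point. Transporting this classification back through $\Phi^{-1}$ preserves both the $C^2$/$C^{1,1/2}$ dichotomy and the curvature blow-up, proving Theorem \ref{the2}.

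The main obstacle is verifying that the hypotheses of \cite{ACF6} genuinely hold for our problem: namely, that after flattening, the quasilinear operator $\mcL$ and the Bernoulli coefficient $Q$ satisfy the smoothness and non-degeneracy assumptions of \cite{ACF6}, and that the blow-up analysis, originally carried out for a model equation, remains valid when the leading coefficients are themselves smooth functions of $\nabla\psi$. This amounts to a careful Schauder-type estimate in $U$ together with a perturbation argument showing that the relevant blow-up limits solve the constant-coefficient model problem. Once this is in place, the dichotomy is a direct citation of \cite{ACF6}, and the alternative in case (2) is sharp because the $C^{1,1/2}$ homogeneous singular mode is explicitly associated with a non-integrable curvature.
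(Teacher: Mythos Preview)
Your proposal is correct and takes essentially the same route as the paper: both reduce the question to the dichotomy of Alt--Caffarelli--Friedman \cite{ACF6}. The paper's proof is terser than yours; it simply records that the stream function has uniformly continuous gradient in a one-sided neighborhood of $A$ and that $N\cup\Gamma$ is $C^1$ at $A$ (the smooth fit already obtained in the existence argument), and then invokes Theorem~1.1 of \cite{ACF6} directly, without any preliminary flattening of $N$. The flattening diffeomorphism you describe is not needed because \cite{ACF6} is formulated in enough generality (quasilinear, axially symmetric, variable Bernoulli coefficient) to apply to the present problem as it stands. One minor slip: in this paper $\psi=m_0$, not $\psi=0$, on $N\cup\Gamma$; this is of course irrelevant after subtracting the constant.
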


\begin{xrem}
The results of Theorem \ref{the2} imply that either $$\text{the curvature
along $\Gamma$ tends to the curvature of $N$ at $A$,}$$ or
$$\text{the curvature of $\Gamma$ tends to $\pm\infty$ in absolute value as one approaches $A$ along $\Gamma$}.$$
The second case is so-called {\it abrupt separation}. The proof of Theorem \ref{the2} follows from Theorem 1.1 in \cite{ACF6} directly.
\end{xrem}

To investigate the well-posedness of the compressible subsonic
impinging jet in axially symmetric case, from the mathematical point
of view, there are at least three difficulties and key points here.
The one is how to discover a mechanism to guarantee the smoothness
and the global subsonicity of the impinging jet in the whole fluid
field. In the first well-posedness result \cite{ACF5} on
compressible subsonic jet, the authors suggested to constrain the
atmospheric pressure with subsonic condition and the convex geometry
condition of the nozzle wall. With the aid of geometry property,
they can conclude that the compressible jet achieves its maximal
speed on the free boundary, and then the subsonic condition on the
free boundary implies the global subsonicity of the jet. A similar
idea has been adapted in the recent work on subsonic impinging jet
in two dimensions in \cite{CDW}. However, in the present work, our
idea is quite different from the one in \cite{ACF5}. We do not
restrict the condition on the atmospheric pressure and the geometry
condition on the nozzle wall, and we find an upper critical value of
the incoming mass flux and show the regularity and global
subsonicity of the impinging jet provided that the incoming mass
flux is less than the upper critical value.

 The second difficulty is how to fulfill the continuous fit condition between the nozzle wall and the free boundary. In the pioneer work \cite{ACF5}, the continuous fit condition was fulfilled for special choice of the incoming mass flux. Namely, they showed that there exists a unique incoming mass flux, such that the free boundary connects smoothly at the endpoint of the nozzle wall. Here, we choose the pressure in the inlet as a parameter and show that there exists a unique pressure in the inlet lying in an appropriate interval $(p_1,p_2)$, such that the continuous fit condition holds. As mentioned in Remark \ref{ref1}, the result implies that there exists a pressure difference between the inlet and the outlet, such that the continuous fit condition is fulfilled. This makes the result more reasonable from the physical point of
 view.
 The third key point here is that for the 3D axially symmetric impinging jet there is no uniform positive distance between the free surface $\Gamma$ and the uneven wall $N_0$. Our proof firstly focuses on the decay estimates of the solution in far field. Moreover, with the optimal decay rate in hand, we get the convergence rate of the distance between the free surface and the ground. And then rescaling the impinging jet in downstream obtains many important facts, such as the asymptotic behavior of the impinging jet in downstream.

%%\begin{figure}[!h]
%%\includegraphics[width=100mm]{AS-CImp02.eps}
%%\caption{Infinitely cavitational flow in infinitely long nozzle with bluff body}\label{f3}
%%\end{figure}

%%\begin{xrem}we can consider the problem of the axially symmetric subsonic impinging jet flow impacting on a hemispherical cup (see Figure 3), and give the existence of that problem. The main feature is that the hemispherical cup goes to straight at the far field in $y-$direction, which is the basic reason to the ununiqueness of the solution to the problem, thus the free boundary cannot be close to the hemispherical cup, then the impinging jet flow possesses a uniformly positive width in infinity. Hence we can also establish some important properties, such as the positivity of the radical velocity, asymptotic behavior of the impinging jet flow.
%%\end{xrem}

The rest of the paper is organized as follows. In Section 2, we
introduce a variational problem to solve the free boundary problem,
and moreover, establish some properties of the minimizer, such as
the bounded gradient lemma and the non-degeneracy lemma. The Section
3 is about the free boundary of the minimizer, we prove the
continuous dependence of the minimizer and the free boundary with
respect to the parameter $\ld$, and obtain the continuous fit
condition of the free boundary. In Section 4, we establish the
existence and uniqueness of the subsonic solution to the axially
symmetric impinging flow problem, provided that the incoming mass
flux is small enough. The existence of critical mass flux is
obtained in Section 5. In the final section, we give a summary of
the proof to the main results in this paper.

\section{Mathematical setting on the FBP}
\subsection{Stream function setting}

Based on the continuity equation, the stream function $\psi$
can be introduced such that \be\label{b1}u=\f{1}{x\rho}\psi_y \ \ \text{and}\ \
v=-\f{1}{x\rho}\psi_x.\ee Without loss of generality, we can impose the boundary condition as
$$\psi=m_0\ \ \text{on $N\cup \Gamma$ and}\ \psi=0\ \ \text{on $N_0\cup I$},$$ where $m_0=\f{M_0}{2\pi}$. Denote $\O$ as the possible fluid field and $E=\O\cap\{x>b\}$. Define the free boundary of the stream function as follows
$$\Gamma=E\cap\p\{\psi<m_0\}.$$

Since the pressure is equal to the constant atmospheric pressure
$p_{atm}>0$ on the free boundary $\Gamma$, it follows from
Bernoulli's law \eqref{a12} that the density $\rho$ and the momentum
$\rho q$ are also constants on $\Gamma$, denote
\be\label{b2}\rho_0=\left(\f{p_{atm}}{\mathcal{A}}\right)^{\f1\gamma},\
\
q_0=\left(2\mathcal{B}-\f{2\mathcal{A}\gamma}{\gamma-1}\rho_0^{\gamma-1}\right)^{\f12}\
\ \text{and}\ \ \ld=\rho_0 q_0\ \ \text{on}\ \ \Gamma.\ee It is easy
to see that
\be\label{b3}\left|\f{\nabla\psi}{x}\right|=\f{1}{x}\f{\p\psi}{\p\nu}=\l=\rho q=\rho_0q_0=\rho_0\left(2\mathcal{B}-\f{2\mathcal{A}\gamma}{\gamma-1}\rho_0^{\gamma-1}\right)^{\f12}~~\text{on}~~\G,\ee
where $\nu$ is the outer normal vector of $\Gamma$. Moreover, one
has
\be\label{b4}\f{2m^2_0}{a^4\rho^2_{in}}+\f{\mathcal{A}\gamma}{\gamma-1}\rho_{in}^{\gamma-1}
=\f{\ld^2}{2\rho_0^2}+\f{\mathcal{A}\gamma}{\gamma-1}\rho_0^{\gamma-1}=\mathcal{B}. \ee By virtue of \eqref{b4}, $\ld$
%$\ld=\left(2\mathcal{B}(\f{p_{atm}}{\mathcal{A}})^{\f2\gamma}+\f{2\mathcal{A}\gamma}{\gamma-1}(\f{p_{atm}}{\mathcal{A}})^{\f{\gamma+1}\gamma}\right)^{\f12}$
is uniquely determined by the density $\rho_{in}$ in upstream, once
$m_0$ and $\rho_0$ are fixed. Therefore, we can take $\ld$ as a
parameter to solve the free boundary problem firstly, and denote
\be\label{bf4}\mathcal{B}(\ld^2)=\f{\ld^2}{2\rho_0^2}+\f{\mathcal{A}\gamma}{\gamma-1}\rho_0^{\gamma-1}.\ee
As we know, there exist some critical quantities,
$$q_{\ld,cr}=\left(2\mathcal{B}(\ld^2)\f{\gamma-1}{\gamma+1}\right)^{\f12},\
 \rho_{\ld,cr}=\left(\f{2\mathcal{B}(\ld^2)}{\mathcal{A}\gamma}\f{\gamma-1}{\gamma+1}\right)^{\f1{\gamma-1}},
  \ \rho_{\ld,max}=\left(\f{\mathcal{B}(\ld^2)}{\mathcal{A}\gamma}
  (\gamma-1)\right)^{\f1{\gamma-1}},$$
  such that the flow is subsonic if and only if $q<q_{\ld,cr}$
  or $\rho_{\ld,cr}<\rho\leq\rho_{\ld,max}$ (see also in \cite{Bers2} and \cite{CFO}).

Let $t=\left|\f{\nabla\psi}{x}\right|^2$ be the square norm of the
momentum and the Bernoulli's law \eqref{a12} gives that
$$
\f{t}{2\rho^2}+\f{\mathcal{A}\gamma}{\gamma-1}\rho^{\gamma-1}=\f{\ld^2}{2\rho_0^2}+\f{\mathcal{A}\gamma}{\gamma-1}\rho_0^{\gamma-1}=\mathcal{B}(\ld^2).
$$
Moreover, set
$$\Pi_\ld=\rho_{\ld,cr}q_{\ld,cr}$$and \be\label{bf2} \mathcal{H}(t,\rho;\ld^2)=\f{t}{2\rho^2}+\f{\mathcal{A}\gamma}{\gamma-1}\rho^{\gamma-1}-\mathcal{B}(\ld^2)=0,\ee for $t>0$, $\rho_{\ld,cr}<\rho\leq\rho_{\ld,max}$ and $\ld\in (0,\Pi_\ld)$.

A simple
manipulation leads to that \be\label{bf5}\f{\p\mathcal{H}}{\p (\ld^2)}(t,\rho;\ld^2)=-\f{1}{2\rho_0^2}\ \ \ \text{and}\ \ \ \f{\p\mathcal{H}}{\p t}(t,\rho;\ld^2)=\f{1}{2\rho^2},\ee and
\be\label{bf3}\ba{rl}\f{\p\mathcal{H}}{\p \rho}(t,\rho;\ld^2)&=\f{1}{\rho}\left(\mathcal{A}\gamma\rho^{\gamma-1}-\f{t}{\rho^2}\right)\\
&=\f{1}{\rho}\left(\f{\gamma+1}{\gamma-1}\mathcal{A}\gamma\rho^{\gamma-1}-2\mathcal{B}(\ld^2)\right)\\
&=\f{1}{\rho}\f{\gamma+1}{\gamma-1}\mathcal{A}\gamma\left(\rho^{\gamma-1}-\rho^{\gamma-1}_{\ld,cr}\right)\\
&>0,\ea\ee for any $\rho_{\ld,cr}<\rho\leq\rho_{\ld,max}$ and
$\ld\in(0,\Pi_\ld)$, where we used the equality \eqref{bf2}.  Thus,
noticing \eqref{bf2}, the density $\rho$ can be described as a
function of $t$ with a parameter $\ld\in(0,\Pi_\ld)$ saying
$\rho(t;\ld^2)$, provided that
$\rho_{\ld,cr}<\rho\leq\rho_{\ld,max}$. Furthermore, it follows from
\eqref{bf5} and \eqref{bf3} that
%\be\label{b5}\left.\f{\p\rho(t;\ld^2)}{\p
%t}\right|_{t=\Xi^2(\lambda)}=-\left.\f{\f{\p\mathcal{H}(t,\rho;\ld^2)}{\p
%t}}{\f{\p\mathcal{H}(t,\rho;\ld^2)}{\p
%\rho}}\right|_{t=\Xi^2(\lambda)}=-\f{1}{2\rho^2\f{\p\mathcal{H}(\Xi^2(\lambda),\rho;\ld^2)}{\p
%\rho}}<0,\ee
\be\label{b5}\f{\p\rho(t;\ld^2)}{\p
t}=-\f{\f{\p\mathcal{H}(t,\rho;\ld^2)}{\p
t}}{\f{\p\mathcal{H}(t,\rho;\ld^2)}{\p
\rho}}=-\f{1}{2\rho^2\f{\p\mathcal{H}(t,\rho;\ld^2)}{\p
\rho}}<0,\ee and
\be\label{bb5}\f{\p\rho(t;\ld^2)}{\p(\ld^2)}=-\f{\f{\p\mathcal{H}(t,\rho;\ld^2)}{\p
(\ld^2)}}{\f{\p\mathcal{H}(t,\rho;\ld^2)}{\p \rho}}
=\f{1}{2\rho_0^2\f{\p\mathcal{H}(t,\rho;\ld^2)}{\p
\rho}}>0,\ee for any $\rho_{\ld,cr}<\rho\leq\rho_{\ld,max}$ and
$\ld\in(0,\Pi_\ld)$.
 In view of \eqref{b5}, it is easy to check that for any $\ld\in (0,\Pi_\ld)$,
\be\label{bf6}\text{$\rho_{\ld,cr}<\rho\leq\rho_{\ld,max}$ if and
only if $t\in(0,\Pi^2_\ld)$}.\ee Thus we can conclude that
$\rho(t;\ld^2)$ is a decreasing smooth function with
respect to $t\in[0,\Pi^2_\ld)$ for $\lambda\in(0,\Pi_\ld)$.
Moreover, the density $\rho$ for subsonic flow has the following
uniform estimates
\be\label{b6}\left(\f2{\gamma+1}\right)^{\f1{\gamma-1}}\rho_0\leq\rho_{\ld,cr}<\rho\leq\rho_{\ld,max}\leq
\left(\f{\gamma+1}2\right)^{\f1{\gamma-1}}\rho_0.\ee

%It is easy to check that \be\label{b7}\rho(t,t)=\rho_0\ \
%\text{and}\ \ \rho_t(t,t)=-\rho_s(t,t)\ \ \text{for}\ \
%t\in(0,\Pi^2_{\ld}).\ee

After a direct computation, there exists a
$\ld_{cr}=\left(\mathcal{A}\gamma\rho_0^{\gamma+1}\right)^{\f12}=(\mathcal{A}\gamma)^{\f12}
\left(\f{p_{atm}}{\mathcal{A}}\right)^{\f{\gamma+1}{2\gamma}}$, such that
$$\ld<\Pi_\ld\ \ \text{for any $\ld<\ld_{cr}$, and}\ \ \ld_{cr}=\Pi_{\ld_{cr}}.$$
In this paper, we assume that the parameter $\ld<\ld_{cr}$. In the
following, denote
$$\rho_1(t;\ld^2)=\f{\p\rho(t;\ld^2)}{\p t}\ \ \text{and}\ \ \rho_2(t;\ld^2)=\f{\p\rho(t;\ld^2)}{\p (\ld^2)}$$
the derivatives of the function $\rho(t;\ld^2)$ for any
$t\in(0,\Pi^2_\ld)$ and $\ld\in(0,\Pi_\ld)$.

 Furthermore,
the irrotational condition \eqref{a05} deduces to the governing
equation for the stream function in the flow field that
\be\label{b8}Q_\ld\psi=\g\cdot\left(\f{\nabla\psi}{x\rho(|\f{\nabla\psi}{x}|^2;\ld^2)}\right)=0~~~~\text{in}~~\O_0,\ee
where $\g=(\p_x,\p_y)$ and $\O_0=\O\cap\{\psi<m_0\}$ is the fluid
field.

It is not difficult to see that the equation in \eqref{b8} becomes
degenerate as $\f{|\g\psi|}{x}\rightarrow\Pi_\ld$, in order to guarantee the
uniform ellipticity, at first we consider the following modified
problem. The essence of this idea has already been illustrated in
the compressible subsonic problem, seeing \cite{ACF5,Bers1,Bers2,CDX,DD,DWX,DX,DXX,DXY,XX1,XX2,XX3}.

Let $\tilde{\rho}(t;\ld^2)$ be a smooth decreasing function
satisfying
\be\label{b9}\tilde{\rho}(t;\ld^2)=\left\{\ba{rl} \rho(t;\ld^2),~~~&\text{for}~~t\leq(\Pi_\ld-2\t\e)^2,\\
\rho((\Pi_\ld-\t\e)^2;\ld^2),~~~&\text{for}~~t\geq(\Pi_\ld-\t\e)^2,\ea\right.\ee
for any small $\t\e>0$, and
\be\label{b10}\text{$-\f{\t\rho_1(t;\ld^2)}{\t\rho^2(t;\ld^2)}\leq
\f{C_{\t\e}}{1+t}$\ \ \ for $C_{\t\e}>0$.}\ee Here,
$\t\rho_1(t,\ld^2)$ denotes the derivative function of
$\t\rho(t,\ld^2)$ with respect to $t$.

 Hence, we first
consider the following modified free boundary problem
\be\label{b11}\left\{\ba{rl}\t Q_\ld\psi=\g\cdot\left(\f{\nabla\psi}{x\t\rho(|\f{\nabla\psi}{x}|^2;\ld^2)}\right)=0~~&\text{in}~~\O\cap\{\psi<m_0\},\\
\f{1}{x}\f{\p\psi}{\p\nu}=\l~~&\text{on}~~\G,\\
\psi=0~~~~\text{on}~~N_0\cup I,\ \
\psi=m_0~~~~&\text{on}~~N\cup\G.\ea\right.\ee In the end of this section, we will verify that
$\f{|\nabla\psi|}{x}\leq\Pi_\ld-2\t\e$ in $\bar\O_0$, thus the subsonic cut-off can be taken away and $\tilde{\rho}\left(\left|\f{\g\psi}{x}\right|^2;\ld^2\right)=\rho\left(\left|\f{\g\psi}{x}\right|^2;\ld^2\right)$.
\subsection{Variational approach}

To solve the free boundary value problem \eqref{b11} with any parameter
$\ld<\Pi_\ld$, we will introduce the variational method, which has been adapted to solve the compressible jet problem in
\cite{ACF5}. Next, we give the corresponding variational problem as
follows. Firstly, define an admissible set (see Figure \ref{fi3})
as
$$\ba{rl} K=\{\psi\in~H^{1}_{loc}(\mathbb{R}^2)\mid & \psi\leq m_0\ \text{a.e. in $\mathbb{R}^2$}, \ \psi=m_0 \ \text{lies above}\ N,\\
&\psi=0\ \text{lies below $N_0$ and lies left $I$}\}.\ea $$ Denote
$$F(t;\ld)=\int_0^t\f{1}{\tilde{\rho}(\tau;\ld^2)}d\tau,\
\ F_1(t;\ld)=\f{\p F(t;\ld)}{\p t}\ \
\text{and}~~F_{11}(t;\ld)=\f{\p^2 F(t;\ld)}{\p t^2},$$ which
together with \eqref{b10} yield that
\be\label{b12}F(0;\ld)=0,\ \ F(\ld^2;\ld)=\f1{\rho_0},~~0\leq
F_{11}(t;\ld)\leq\f{C_{\t\e}}{1+t}.\ee Set
$$\Ld=\Ld(\ld^2)=\sqrt{2F_1(\ld^2;\ld)\ld^2-F(\ld^2;\ld)}.$$ In view of \eqref{b6}, it
is easy to check that \be\ba{rl}\label{b7}\f{\Ld^2(\ld^2)}{\ld^2}=\f2{\rho_0}-\f1{\ld^2}\int_0^{\ld^2}\f{1}{\tilde{\rho}(\tau;\ld^2)}d\tau\leq \f{1}{\rho_0}\left(2-\left(\f{2}{\gamma+1}\right)^{\f1{\gamma-1}}\right),\ea\ee  and
\be\ba{rl}\label{b13}\f{\Ld^2(\ld^2)}{\ld^2}=\f2{\rho_0}-\f1{\ld^2}\int_0^{\ld^2}\f{1}{\tilde{\rho}(\tau;\ld^2)}d\tau\geq \f{1}{\rho_0}\left(2-\left(\f{\gamma+1}2\right)^{\f1{\gamma-1}}\right)>0,\ea\ee for any
$\ld\in[0,\Pi_\ld)$. Furthermore, it follows from \eqref{bb5} that
\be\label{bb0}\f{d\Ld^2(\ld^2)}{d(\ld^2)}=\f{1}{\rho_{0}}+\int_0^{\ld^2}\f{\rho_2(\tau;\ld^2)}{{\rho}^2(\tau;\ld^2)}d\tau> \f1{\rho_0},\ee  for any
$\ld\in[0,\Pi_\ld-2\t\e)$, which implies that $\Ld(\ld^2)$ is uniquely determined by $\ld\in(0,\Pi_\ld-2\t\e)$.

Set \be\label{bf1}\text{$f(\mathbf{\eta};\ld)=
F(|\mathbf{\eta}|^2;\ld)$ with $\mathbf{\eta}=(\eta_1,\eta_2)\in\mathbb{R}^2$,}\ee it follows from \eqref{b12}
that $f(\mathbf{\eta};\ld)$ is convex with respect to $\eta$, and there exists a constant $\vartheta$ depending on $\ld_{cr}$
and $\t\e$, such that
$$\vartheta|\mathbf{\xi}|^2\leq \sum_{i,j=1}^{2}\f{\p^2f(\mathbf{\eta};\ld)}{\p \eta_i\p \eta_j}\xi_i\xi_j\leq \vartheta^{-1}|\mathbf{\xi}|^2\ \ \text{for any
$\mathbf{\xi}=(\xi_1,\xi_2)\in\mathbb{R}^2$},$$ and
$$\vartheta|\eta|^2\leq f_\eta(\eta;\ld)\cdot \eta\ \ \text{and}\ \ \vartheta|\eta|^2\leq f(\eta;\ld)\leq\vartheta^{-1}|\eta|^2.$$

Define a function with any parameter $\ld\in(0,\Pi_\ld)$ as
follows,
\be\label{b14}G(\g\psi,\psi,x;\ld)=xF\left(\left|\f{\nabla\psi}{x}\right|^2;\ld\right)+\left(x\Ld^2-2\l F_1(\ld^2;\ld)\nabla\psi\cdot
e\right)\chi_{\{\psi<m_0\}\cap E},\ee where $\chi_D$ is
the indicator function of the set $D$ and $e=(-\sin\theta,\cos\theta)$. By virtue of the convexity of $F(t;\ld)$ with respect to $t$, one has
\be\label{b15}\ba{rl}{}&G(\g\psi,\psi,x;\ld)\\
\geq&x\left(
F\left(\left|\f{\nabla\psi}{x}\right|^2;\ld\right)-F(\ld^2;\ld)-F_1(\ld^2;\ld)\left(\left|\f{\nabla\psi}{x}\right|^2-\ld^2\right)\right)\chi_{\{\psi<m_0\}\cap E}\\
&+xF_1(\ld^2;\ld)\left|\f{\nabla\psi}{x}-\ld
e\right|^2\chi_{\{\psi<m_0\}\cap E}\\
\geq& xF_1(\ld^2;\ld)\left|\f{\nabla\psi}{x}-\ld
e\right|^2\chi_{\{\psi<m_0\}\cap E},\ea\ee and
\be\label{b16}\ba{rl}{}G(\g\psi,\psi,x;\ld)\leq
Cx\left|\f{\nabla\psi}{x}-\ld e\right|^2\chi_{\{\psi<m_0\}\cap
E}+xF_1\left(\left|\f{\nabla\psi}{x}\right|^2;\ld\right)\chi_{\{\psi<m_0\}\setminus
E}.\ea\ee

Hence, we define a functional
$$J_{\ld}(\psi)=\int_\O G(\g\psi,\psi,x;\ld)dxdy.$$
\begin{figure}[!h]
\includegraphics[width=100mm]{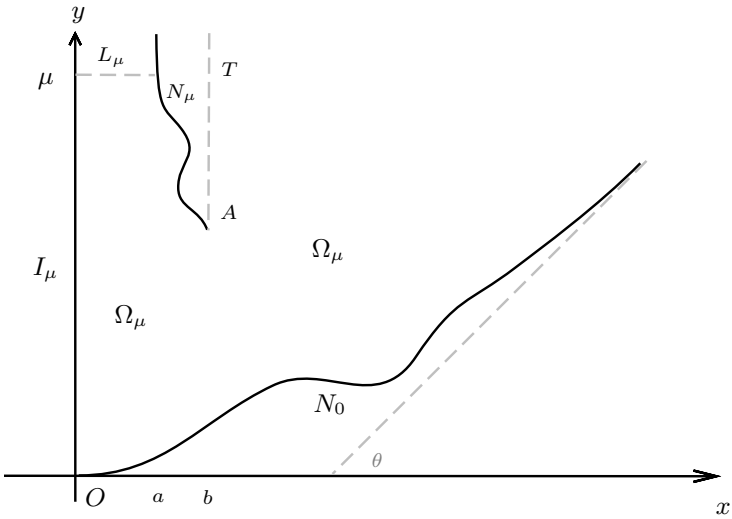}
\caption{Truncated domain}\label{f3}
\end{figure}
It follows from \eqref{b15} that the functional $J_{\ld}(\psi)$ is
non-negative for any $\psi\in K$. Obviously, $J_{\ld}(\psi)$ is
unbounded for any $\psi\in K$. Thus we will truncate the domain as
$\O_\mu$ for any $\mu>1$ (see Figure \ref{f3}), which is bounded by $N_\mu, I_\mu,N_0, L_\mu$ and $T=\{(b,y)\mid y>1\}$, where $$\text{$N_{\mu}=N\cap\{x\geq x_\mu\}$, $I_\mu=I\cap\{y\leq\mu\}$, and $L_{\mu}=\{(x,y)\mid 0\leq x\leq x_\mu,y=\mu\}$},$$ with $x_\mu=\min\{x\mid g(x)=\mu\}$. Define the following functional in the truncated domain $\O_\mu$,
$$J_{\ld,\mu}(\psi)=\int_{\O_\mu} G(\g\psi,\psi,x;\ld)dxdy.$$

To overcome the singularity of the functional $J_{\ld,\mu}$ near $y$-axis, we first consider the following variational problem.

{\bf The truncated variational problem $(P^\delta_{\ld,\mu})$}: For
any $\ld\leq\Pi_\ld-3\t\e$, $\mu>1$ and small $\delta>0$, find a $\psi_{\ld,\mu}^\delta\in
K_{\mu}^\delta$ such that
$$J^\delta_{\ld,\mu}(\psi^\delta_{\ld,\mu})=\min_{\psi\in K^\delta_{\mu}}
J^\delta_{\ld,\mu}(\psi),$$ where $$J^\delta_{\ld,\mu}(\psi)=
\int_{\O_\mu} G(\g\psi,\psi,x+\delta;\ld)dxdy$$ and $$
K_{\mu}^\delta=\left\{\psi\in K\mid\psi=\min\left\{\f{m_0}{x^2_\mu}((x+\delta)^2-\delta^2),m_0\right\}~~~~\text{on}~~L_{\mu}\right\}.$$

\begin{lem}\label{lb1}
The variational problem $(P^\delta_{\ld,\mu})$ has a minimizer
$\psi^\delta_{\ld,\mu}$ and $\psi_{\ld,\mu}^\delta\in C^{0,1}(\O_\mu)$. Furthermore, the minimizer
$\psi^\delta_{\ld,\mu}$ satisfies that
$$\int_{\O_\mu}f_\eta\left(\f{\g\psi^\delta_{\ld,\mu}}{x+\delta};\ld\right)\cdot\g\xi dxdy\geq0\ \ \text{for any $\xi\in C^\infty_0(\O_\mu)$ and $\xi\geq 0$},$$
and
\be\label{b17}\int_{\O_\mu}f_\eta\left(\f{\g\psi^\delta_{\ld,\mu}}{x+\delta};\ld\right)\cdot\g\zeta
dxdy=0\ \ \text{for any $\zeta\in
C^\infty_0(\O_\mu\cap\{\psi^\delta_{\ld,\mu}<m_0\})$},\ee where $f(\eta;\lambda)$ is defined in \eqref{bf1}.
Furthermore, \be\label{b18}\text{ $0\leq\psi_{\ld,\mu}^\delta(x,y)\leq
\min\left\{\f{m_0}{x^2_\mu}((x+\delta)^2-\delta^2),m_0\right\}\ $ in $\ \O_\mu$.}\ee
\end{lem}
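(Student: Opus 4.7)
The plan is to apply the direct method of the calculus of variations and then invoke Alt--Caffarelli-type free boundary regularity, as in \cite{ACF5}. The shift $x+\delta\geq\delta>0$ makes the weighted $H^1$ theory on $\O_\mu$ essentially equivalent to the unweighted one, so the classical arguments transfer almost verbatim.

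\textbf{Existence.} First I would exhibit an explicit admissible competitor (the boundary datum on $L_\mu$ extended by $0$ below $N_0$ and by $m_0$ above $N$) to see that $K^\delta_\mu\neq\emptyset$, and use the coercivity of $f$ from \eqref{bf1} to produce a uniform $H^1(\O_\mu)$ bound on a minimizing sequence, hence weak $H^1$ and a.e.\ convergence to some $\psi^\delta_{\ld,\mu}\in K^\delta_\mu$. Lower semicontinuity of $J^\delta_{\ld,\mu}$ splits into three pieces. The principal part $\int(x+\delta)F(|\g\psi/(x+\delta)|^2;\ld)\,dxdy$ is weakly lsc by convexity of $f$. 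The indicator term $\int(x+\delta)\Ld^2\chi_{\{\psi<m_0\}\cap E}\,dxdy$ is lsc under a.e.\ convergence by Fatou and the pointwise bound $\liminf_n\chi_{\{\psi_n<m_0\}}\geq\chi_{\{\psi<m_0\}}$. The linear term $-2\ld F_1(\ld^2;\ld)\int\g\psi\cdot e\,\chi_{\{\psi<m_0\}\cap E}\,dxdy$ reduces, via Stampacchia's theorem ($\g\psi=0$ a.e.\ on $\{\psi=m_0\}$) and the divergence theorem, to a boundary functional of the trace of $\psi$ on $\p(E\cap\O_\mu)$, which is weakly continuous.

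\textbf{Variational (in)equalities and pointwise bounds.} For $\xi\in C^\infty_0(\O_\mu)$ with $\xi\geq0$, the downward perturbation $\psi^\delta_{\ld,\mu}-\e\xi$ lies in $K^\delta_\mu$ for small $\e>0$; differentiating $J^\delta_{\ld,\mu}$ at $\e=0^+$ and observing that the extra indicator mass produced by the perturbation sits on $\{\psi^\delta_{\ld,\mu}=m_0\}$, where $\g\psi^\delta_{\ld,\mu}=0$, yields the stated inequality. For $\zeta\in C^\infty_0(\O_\mu\cap\{\psi^\delta_{\ld,\mu}<m_0\})$, both $\pm\e\zeta$ are admissible and leave the indicator locally unchanged, so the usual two-sided inner variation gives \eqref{b17}. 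The bounds in \eqref{b18} then come from a truncation argument: testing minimality against $\max\{\psi^\delta_{\ld,\mu},0\}$ and against $\min\{\psi^\delta_{\ld,\mu},\varphi\}$ with $\varphi=\min\{m_0((x+\delta)^2-\delta^2)/x^2_\mu,m_0\}$, the convexity of $F$ and the sign of the indicator contribution force $J^\delta_{\ld,\mu}$ to strictly decrease on any set where these truncations differ nontrivially from $\psi^\delta_{\ld,\mu}$.

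\textbf{Lipschitz regularity.} In $\{\psi^\delta_{\ld,\mu}<m_0\}$ equation \eqref{b17} is uniformly elliptic thanks to the subsonic cut-off \eqref{b9}--\eqref{b10} that makes the constant $\vartheta$ independent of $|\g\psi|$, so interior De~Giorgi--Nash--Moser estimates give H\"older regularity, and $\psi^\delta_{\ld,\mu}\equiv m_0$ trivially in the coincidence set. The main obstacle, and the only step that is not routine, is the Lipschitz estimate up to and across the free boundary $\p\{\psi^\delta_{\ld,\mu}<m_0\}$. This requires adapting the Alt--Caffarelli barrier and comparison technique of \cite{ACF5} to our weighted functional: the effective Bernoulli-type jump condition $|\g\psi^\delta_{\ld,\mu}/(x+\delta)|=\ld$ on the free boundary is exactly what is encoded by the combination $(x+\delta)\Ld^2-2\ld F_1(\ld^2;\ld)\g\psi\cdot e$ inside the indicator, and this is the linearization that produces the bounded-gradient estimate and hence $\psi^\delta_{\ld,\mu}\in C^{0,1}(\O_\mu)$.
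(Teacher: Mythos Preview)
Your outline overlooks the single most important structural feature of the problem: $\O_\mu$ is truncated only at the inlet by $L_\mu$ and remains \emph{unbounded} downstream, extending to $x\to+\infty$ along the wall $N_0$. Consequently neither the naive competitor you propose nor a global $H^1(\O_\mu)$ coercivity bound is available: any admissible $\psi$ for which $\{\psi<m_0\}\cap E$ has infinite $(x+\delta)$--weighted area gives $J^\delta_{\ld,\mu}(\psi)=+\infty$. The paper's first and essential step is precisely to build a competitor that mimics the far-field jet,
\[
\psi_0=\min\bigl\{\ld\max\{(x+\delta)((y-g_0(R_0))\cos\th-(x+\delta-R_0)\sin\th),0\},\,m_0\bigr\},
\]
so that in $E\cap\{\psi_0<m_0\}$ one has $\left|\f{\g\psi_0}{x+\delta}-\ld e\right|=O(1/x)$; combined with \eqref{b16} this yields $J^\delta_{\ld,\mu}(\psi_0)<\infty$, after which existence follows from the arguments in \cite{ACF3,ACF4} adapted to this unbounded setting. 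The same unboundedness forces the proof of $\psi^\delta\geq0$ to pass through exhausting subdomains $\O_{\mu,R}$ with an explicit sign check on the boundary term arising from the linear piece $-2\ld F_1(\ld^2;\ld)\g\psi\cdot e$ of $G$; a bare truncation argument as you sketch does not suffice.

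There is also a sign error in your derivation of the variational inequality. The downward competitor $\psi^\delta-\e\xi$ \emph{enlarges} $\{\psi<m_0\}$ by the set $\{\psi^\delta=m_0,\ \xi>0\}$, producing an extra indicator contribution $\int_{\{\psi^\delta=m_0,\ \xi>0\}\cap E}(x+\delta)\Ld^2$ that is $O(1)$ rather than $o(\e)$; after dividing by $\e$ you obtain no information (and even if that set were null the first variation would give an upper, not a lower, bound on $\int f_\eta\cdot\g\xi$). The paper perturbs in the opposite direction, with $\min\{\psi^\delta+\e\xi,m_0\}$: this can only \emph{shrink} $\{\psi<m_0\}$, so the indicator change is $\leq0$ and may be discarded, leaving $0\leq 2\e\int F_1\,\f{\g\psi^\delta\cdot\g\xi}{x+\delta}\,dxdy+o(\e)$ and hence the stated inequality.
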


\begin{proof}

%In fact that for any $\psi\in
%K_{\ld,\mu}$,
%$$\ba{rl} J_{\ld,\mu}(\psi)&=\int_{\O_\mu}F(\g\psi,\psi,x;\ld)dxdy\\
%&=\int_{\O_\mu\setminus E}r\Phi\left(\f{|\nabla\psi|^2}{x^2}\right)dxdy+\int_{\O_\mu\cap E}\left(r\Phi\left(\f{|\nabla\psi|^2}{x^2}\right)+(r\Psi(\l^2)-2\l\Phi'(\l^2)\nabla\psi\cdot e)I_{\{\psi<m_0\}\cap E}\right)dxdy\\
%&\geq\int_{\O_\mu\cap E}\left(r\left(\Phi\left(\f{|\nabla\psi|^2}{x^2}\right)-\Phi_\ld(\l^2)\right)+2(r\l^2\Phi_\ld'(\l^2)-\l\Phi_\ld'(\l^2)\nabla\psi\cdot e)\right)I_{\{\psi<m_0\}}dxdy\\
%&\geq\int_{\O_\mu\cap\{\f{r_0}{2}<r<r_0\}}\left(r\Phi_\ld'(\l^2)\left(\f{|\nabla\psi|^2}{x^2}-\l^2\right)+2(r\l^2\Phi_\ld'(\l^2)-\l\Phi_\ld'(\l^2)\nabla\psi\cdot e)\right)I_{\{\psi<m_0\}}dxdy\\
%&\geq \Phi_\ld'(\l^2)\int_{\O_\mu\cap\{\f{r_0}{2}<r<r_0\}}r\left|\f{\nabla\psi}{x}-\l eI_{\{\psi<m_0\}}\right|^2dxdy\geq0.\ea$$

Define
$$\psi_0(x,y)=\min\{\ld\max\{(x+\delta)\left((y-g_0(R_0))\cos\theta-(x+\delta-R_0)\sin\theta\right),0\},m_0\},$$
it follows from \eqref{a09} that $\psi_0=0$ on $N_0\cap\{x\geq R_0\}$. Then we
can extend $\psi_0$ into the domain $\O_\mu\setminus\{x\leq R_0\}$
so that it belongs to the admissible set $K^\delta_{\mu}$ and
$\{\psi_0<m_0\}\cap(\O_\mu\setminus\{x\leq R_0\})$ is bounded.
 %$$\psi_0=\left\{\ba{rl} &m_0,~~~\text{in}~~~y-g(r_0)-(r-r_0)\tan\theta>\f{m_0}{\l r\cos\theta},\\
%&\l r\left((y-g(r_0))\cos\theta-(r-r_0)\sin\theta\right),~~\text{in}~0<y-g(r_0)-(r-r_0)\tan\theta\leq\f{m_0}{\l r\cos\theta},\\
%&0,~~~\text{in}~~~y-g(r_0)-(r-r_0)\tan\theta\leq 0, \ea\right.$$
%where $r\geq r_0$ for some sufficiently large $r_0>a$. Thanks to the
%conditions \eqref{a07} and \eqref{a09}, we have
%$g'(x)\leq\tan\theta$, namely, $\psi_0(r,g(x))=0$.
Hence, it suffices to verify that $$\int_{\O_\mu\cap\{x>
R_0\}}G(\g\psi_0,\psi_0,x+\delta;\ld)dxdy<+\infty.$$ In fact, it
follows from \eqref{b16} that
$$\ba{rl} &\int_{\O_\mu\cap\{x> R_2\}}G(\g\psi_0,\psi_0,x+\delta;\ld)dxdy\\
&\leq C\int_{R_2}^{+\infty}\int_{g_0(R_0)+(x+\delta-R_0)\tan\theta}^{g_0(R_0)+(x+\delta-R_0)\tan\theta+\f{m_0}{\l(x+\delta)\cos\theta}}
(x+\delta)\left|\f{\nabla\psi_0}{x+\delta}-\l e\right|^2dydx\\
&\leq C\int_{R_0+\delta}^{+\infty}\int_{g_0(R_0)+(x-R_0)\tan\theta}^{g_0(R_0)+(x-R_0)\tan\theta+\f{m_0}{\l x\cos\theta}}\f{\l^2\left((y-g_0(R_0))\cos\theta-(x-R_0)\sin\theta\right)^2}{x}dydx\\
&\leq\f{C}{(R_0+\delta)^3}.\ea$$
The existence of the minimizer to the variational problem
$(P_{\ld,\mu}^\delta)$ can be obtained via the similar arguments
in Lemma 1.1 in \cite{ACF3} and Theorem 1.1 in \cite{ACF4}, and
denote $\psi^\delta=\psi^\delta_{\ld,\mu}$ as the minimizer to the
variational problem $(P^\delta_{\ld,\mu})$ for simplicity.

For any nonnegative function $\xi\in C_0^{\infty}(\O_\mu)$ and
$\e>0$, it is easy to check that $\min\{\psi^\delta+\e\xi,m_0\}\in
K_\mu$ and
$\{\min\{\psi^\delta+\e\xi,m_0\}<m_0\}\subset\{\psi^\delta<m_0\}$.
Thus, we have
$$\ba{rl}0\leq&J^\delta_{\ld,\mu}\left(\min\{\psi^\delta+\e\xi,m_0\}\right)-J^\delta_{\ld,\mu}(\psi^\delta)\\
=&\int_{\O_\mu}(x+\delta)F\left(\f{|\g\min\{\psi^\delta+\e\xi,m_0\}|^2}{(x+\delta)^2};\ld\right)-(x+\delta)F\left(\f{|\g\psi^\delta|^2}{(x+\delta)^2};\ld\right)dxdy\\
\leq&\int_{\O_\mu\cap\{\psi^\delta+\e\xi\leq
m_0\}}(x+\delta)F\left(\f{|\g(\psi^\delta+\e\xi)|^2}{(x+\delta)^2};\ld\right)-(x+\delta)
F\left(\f{|\g\psi^\delta|^2}{(x+\delta)^2};\ld\right)dxdy\\
\leq&2\e\int_{\O_\mu\cap\{\psi^\delta+\e\xi\leq
m_0\}}F_1\left(\f{|\g(\psi^\delta+\e\xi)|^2}{(x+\delta)^2};\ld\right)\f{\g\psi^\delta\cdot\g\xi}{x+\delta}dxdy+o(\e),\ea$$ which implies that
$$0\leq\int_{\O_\mu\cap\{\psi^\delta+\e\xi\leq m_0\}}F_1\left(\f{|\g(\psi^\delta+\e\xi)|^2}{(x+\delta)^2};\ld\right)\f{\g\psi^\delta\cdot\g\xi}{x+\delta}dxdy.$$
Taking $\e\rightarrow 0$ in above inequality, we have
$$0\leq\int_{\O_\mu}F_1\left(\f{|\g\psi^\delta|^2}{(x+\delta)^2};\ld\right)\f{\g\psi^\delta\cdot\g\xi}{x+\delta}dxdy.$$
Similarly, we can verify that \eqref{b17} holds.

Next, we will show that
\be\label{b19}\psi^\delta(x,y)\geq 0\ \ \ \text{in}\ \ \O_\mu.\ee Denote $\psi^\delta_\e=\psi^\delta-\e\min\{\psi^\delta,0\}$ for $\e\in(0,1)$. It is easy to check that $\psi_\e^\delta\in K_\mu^\delta$,
$$\psi_\e^\delta>0\ \ \text{if and only if}\ \ \psi^\delta>0\ \ \text{and}\ \ \psi_\e^\delta\geq\psi^\delta\ \ \text{in}\ \ \O_\mu.$$
Since $\psi^\delta$ is the minimizer to the truncated variational problem $(P_{\ld,\mu}^\delta)$, one has
\be\label{b190}0\leq J^\delta_{\ld,\mu}(\psi_\e^\delta)-J^\delta_{\ld,\mu}(\psi^\delta).\ee For any sufficiently large $R>0$, denote $\O_{\mu,R}=\O_\mu\cap\{y<R\}$ and $E_{R}=\O_{\mu,R}\cap\{x>b\}$, we have
\be\label{b191}\ba{rl}&\int_{\O_{\mu,R}}G(\g\psi_\e^\delta,\psi_\e^\delta,x+\delta;\ld)dxdy-\int_{\O_{\mu,R}}G(\g\psi^\delta,\psi^\delta,x+\delta;\ld)dxdy\\
=&\int_{\O_{\mu,R}}(x+\delta)F\left(\left|\f{\g\psi_\e^\delta}{x+\delta}\right|^2;\ld\right)
-(x+\delta)F\left(\left|\f{\g\psi^\delta}{x+\delta}\right|^2;\ld\right)dxdy\\
&-2\ld F_1(\ld^2;\ld)\int_{\O_{\mu,R}\cap E}\g\psi_\e^\delta\cdot e\chi_{\{\psi^\delta_\e<m_0\}}-\g\psi^\delta\cdot e\chi_{\{\psi^\delta<m_0\}}dxdy\\
\leq&\int_{\O_{\mu,R}}F_1\left(\left|\f{\g\psi_\e^\delta}{x+\delta}\right|^2;\ld\right)\f{|\g\psi_\e^\delta|^2-|\g\psi^\delta|^2}{x+\delta}dxdy\\
&-2\ld F_1(\ld^2;\ld)\int_{\p E_R}(\psi_\e^\delta-\psi^\delta)e\cdot \nu dS \\
\leq&\int_{\O_{\mu,R}}F_1\left(\left|\f{\g\psi_\e^\delta}{x+\delta}\right|^2;\ld\right)\f{((1-\e)^2-1)|\g\min\{\psi^\delta,0\}|^2}{x+\delta}dxdy.\ea\ee Here, we have used the fact
$$\int_{\p E_R}(\psi_\e^\delta-\psi_\e)e\cdot \nu dS=\sin\th\int_{\p E_R\cap\{x=b\}}(\psi_\e^\delta-\psi_\e)dS+\cos\th\int_{\p E_R\cap\{y=R\}}(\psi_\e^\delta-\psi_\e) dS\geq 0.$$
Taking $R\rightarrow+\infty$ in \eqref{b191}, it follows from \eqref{b190} that
$$0\leq((1-\e)^2-1)\int_{\O_{\mu}}F_1\left(\left|\f{\g\psi_\e^\delta}{x+\delta}\right|^2;\ld\right)\f{|\g\min\{\psi^\delta,0\}|^2}{x+\delta}dxdy,$$
which implies that \eqref{b19} holds.

Since $0\leq\psi^\delta_{\ld,\mu}\leq m_0$ in $\O_\mu$, it suffices
to show that
\be\label{b192}\psi^\delta_{\ld,\mu}\leq\f{m_0}{x_\mu^2}((x+\delta)^2-\delta^2)\ ~~\text{in}~~\ \O_\mu\cap\left\{x\leq
\sqrt{x^2_\mu+\delta^2}-\delta\right\}.\ee

In view of \eqref{b17}, the maximum principle gives that the
inequality \eqref{b18} holds.

\end{proof}

With the aid of Lemma \ref{lb1}, we consider the following truncated variational problem.

{\bf The truncated variational Problem $(P_{\ld,\mu})$}: For any
$\ld\leq\Pi_\ld-3\t\e$ and $\mu>1$, find a $\psi\in K_{\mu}$ such
that
$$J_{\ld,\mu}(\psi_{\ld,\mu})=\min_{\psi\in K_{\mu}} J_{\ld,\mu}(\psi),$$ where $$ K_{\mu}=\left\{\psi\in K\mid \psi=\f{m_0}{x^2_\mu}x^2~~~~\text{on}~~L_{\mu}\ \ \text{and}\ \ \psi\leq\min\left\{\f{m_0}{x_\mu^2}x^2,m_0\right\}\ \ \text{a.e. in}\ \ \O_\mu \right\}.$$

\subsection{Existence and fundamental properties of minimizer}

\begin{lem}\label{lb2}
There exists a minimizer $\psi_{\ld,\mu}$ to the variational problem $(P_{\ld,\mu})$ and $\psi_{\ld,\mu}\in C^{0,1}(\O_\mu)$. Moreover, \\
(1) the minimizer $\psi_{\ld,\mu}$
satisfies that
$$\int_{\O_\mu}f_\eta\left(\f{\g\psi_{\ld,\mu}}x;\ld\right)\cdot\g\xi dxdy\geq0\ \ \text{for any $\xi\in C^\infty_0(\O_\mu)$ and $\xi\geq 0$},$$
and $$\t Q_\ld\psi_{\ld,\mu}=0\ \ \text{in $\O_\mu\cap\{\psi_{\ld,\mu}<m_0\}$ and $\psi_{\ld,\mu}\in C^{2,\alpha}(\O_\mu\cap\{\psi_{\ld,\mu}<m_0\})$}.$$ Furthermore,  \be\label{b20}\text{ $\psi_{\ld,\mu}(x,y)\geq 0$ in $\O_\mu$.}\ee
(2) the free boundary $\Gamma_{\ld,\mu}=E\cap\p\{\psi_{\ld,\mu}<m_0\}$ is analytic, and
$$\f1x|\g\psi_{\ld,\mu}|=\ld\ \ \text{on}\ \ \Gamma_{\ld,\mu},$$ and
$$\f1x|\g\psi_{\ld,\mu}|\geq\ld\ \ \text{on}\ \ l,$$ where $l$ is a segment
with $l\subset T\cap\p\{\psi_{\ld,\mu}<m_0\}$.
\end{lem}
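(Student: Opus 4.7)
The plan is to obtain $\psi_{\lambda,\mu}$ as the limit $\delta\to 0^+$ of the minimizers $\psi^\delta:=\psi_{\lambda,\mu}^\delta$ constructed in Lemma \ref{lb1}, and then to apply the Alt--Caffarelli--Friedman free-boundary machinery of \cite{ACF4,ACF5} to derive the Bernoulli condition and the analyticity. The shift $\delta$ in Lemma \ref{lb1} was introduced precisely to regularize the singular factor $1/x$ at the symmetry axis $I$; restoring this singularity in the limit is the core technical issue.

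First I would collect uniform (in $\delta$) estimates. The bound $0\le\psi^\delta\le m_0$ is already \eqref{b18}. Interior Schauder estimates applied to the Euler equation \eqref{b17} give uniform $C^{1,\alpha}_{\mathrm{loc}}$ bounds on compact subsets of $\{\psi^\delta<m_0\}\setminus I$, while the bounded gradient lemma for the weighted quasilinear operator $\tilde Q_\lambda$ (in the spirit of \cite{ACF5}) provides $|\nabla\psi^\delta|\le C\lambda(x+\delta)$ near the free boundary. The quadratic barrier $\psi^\delta\le (m_0/x_\mu^2)((x+\delta)^2-\delta^2)$ controls $|\nabla\psi^\delta|^2/(x+\delta)$ and so prevents energy concentration at $I$. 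Extracting a subsequence $\delta_n\to 0$, I obtain $\psi^{\delta_n}\to\psi_{\lambda,\mu}$ locally uniformly in $\Omega_\mu$ and weakly in $H^1_{\mathrm{loc}}(\Omega_\mu)$, with the boundary trace on $L_\mu$ and the pointwise bound inherited, so $\psi_{\lambda,\mu}\in K_\mu$. Minimality follows from a recovery-sequence argument: for any competitor $\varphi\in K_\mu$ one constructs $\varphi^\delta\in K_\mu^\delta$ (for example $\varphi^\delta(x,y)=\varphi(x+\delta,y)$, suitably cut off to meet the data on $L_\mu^\delta$) such that $J_{\lambda,\mu}^\delta(\varphi^\delta)\to J_{\lambda,\mu}(\varphi)$, and then
$$J_{\lambda,\mu}(\psi_{\lambda,\mu})\le \liminf_{\delta\to 0}J_{\lambda,\mu}^\delta(\psi^\delta)\le \liminf_{\delta\to 0}J_{\lambda,\mu}^\delta(\varphi^\delta)=J_{\lambda,\mu}(\varphi)$$
by lower semicontinuity. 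The variational inequality and the nonnegativity \eqref{b20} pass directly to the limit from Lemma \ref{lb1}, and interior elliptic regularity promotes $\psi_{\lambda,\mu}\in C^{2,\alpha}$ on $\{\psi_{\lambda,\mu}<m_0\}$, where it solves $\tilde Q_\lambda\psi_{\lambda,\mu}=0$ classically.

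For item (2) the integrand $G$ falls within the Alt--Caffarelli--Friedman framework: an inner variation of $\{\psi_{\lambda,\mu}<m_0\}\cap E$ together with minimality forces the Bernoulli relation $x^{-1}|\nabla\psi_{\lambda,\mu}|=\lambda$ at regular points of $\Gamma_{\lambda,\mu}$, the one-phase theory of \cite{ACF4} upgrades $\Gamma_{\lambda,\mu}$ to a $C^{1,\alpha}$ graph, and the Kinderlehrer--Nirenberg hodograph bootstrap promotes it to analytic. For the one-sided inequality on segments $l\subset T\cap\partial\{\psi_{\lambda,\mu}<m_0\}$, I would argue by contradiction: if $x^{-1}|\nabla\psi_{\lambda,\mu}|<\lambda$ held on some subsegment, an admissible inward perturbation extending $\{\psi_{\lambda,\mu}<m_0\}$ across $T$ into the region $\{x<b\}$ (where currently $\psi_{\lambda,\mu}=m_0$) would strictly decrease $J_{\lambda,\mu}$, contradicting minimality. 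The main obstacle throughout will be the careful handling of the axis $I$: the weight $x$ is simultaneously a singular factor in the functional and a degenerate boundary weight for $\Omega_\mu$, so both the construction of the recovery sequences $\varphi^\delta$ compatible with the quadratic vanishing $\psi\sim x^2$ near $I$, and the verification that $\Gamma_{\lambda,\mu}$ remains strictly separated from $I$ (so that the standard free-boundary theory applies in a neighborhood of it), require delicate barrier constructions exploiting the data on $L_\mu$.
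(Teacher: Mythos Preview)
Your route to existence in part (1) differs from the paper's and is more involved than needed. The paper uses the $\delta$-minimizers from Lemma~\ref{lb1} only to produce \emph{one} admissible function $\psi_0\in K_\mu$ with $J_{\lambda,\mu}(\psi_0)<+\infty$: after passing to the limit $\delta_n\downarrow 0$, a rescaling argument near the axis yields $|\nabla\psi_0(X)|/x\le C$, which tames the singular weight. Once the infimum is known to be finite, the direct method (as in Lemma~1.1 of \cite{ACF3}) gives a minimizer outright---no $\Gamma$-convergence, no recovery sequences for general competitors, and no claim that the limit $\psi_0$ is itself the minimizer. Your proposed chain $J_{\lambda,\mu}(\psi_{\lambda,\mu})\le\liminf J_{\lambda,\mu}^\delta(\psi^\delta)\le\liminf J_{\lambda,\mu}^\delta(\varphi^\delta)=J_{\lambda,\mu}(\varphi)$ could in principle be pushed through, but the lower-semicontinuity step is delicate because the weights $x+\delta$ degenerate to $x$ at the axis, and the suggested recovery $\varphi^\delta(x,y)=\varphi(x+\delta,y)$ does not match the prescribed boundary data on $L_\mu$ without additional surgery that you do not supply.

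Your argument for the one-sided inequality $\tfrac{1}{x}|\nabla\psi_{\lambda,\mu}|\ge\lambda$ on $l\subset T$ has the perturbation running the wrong way. The segment $T$ is part of $\partial\Omega_\mu$: the region immediately to its left, above $N$, is outside the admissible class (there $\psi\equiv m_0$ is forced by $K$), so you cannot ``extend $\{\psi<m_0\}$ across $T$''. More importantly, expanding $\{\psi<m_0\}$ is the variation that is profitable when $\tfrac{1}{x}|\nabla\psi|>\lambda$, not when it is $<\lambda$. The correct contradiction reads: if $\tfrac{1}{x}|\nabla\psi|<\lambda$ near a point of $l$, one \emph{shrinks} $\{\psi<m_0\}$ by setting $\psi=m_0$ on a thin strip just to the right of $l$ inside $E$; this competitor is admissible, erases a portion of the $x\Lambda^2\chi_{\{\psi<m_0\}\cap E}$ penalty, and the accompanying gradient change is of lower order, strictly decreasing $J_{\lambda,\mu}$. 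That is the mechanism behind the paper's citation of Lemma~9.1 in \cite{CDW}.
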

\begin{proof}(1) Along the similar arguments in the proof of Lemma 1.1 in \cite{ACF3} and Theorem 1.1 in \cite{ACF4}, one has that there exists a sequence $\{\delta_n\}$ with $\delta_n\rightarrow0$ as
$n\rightarrow+\infty$, such that $$\psi_{\ld,\mu}^{\delta_n}\rightharpoonup\psi_0\ \ \text{in
$H^1_{loc}(\O_\mu)$ and $\psi_{\ld,\mu}^{\delta_n}\rightarrow\psi_0$ uniformly in any compact subset of $\O_\mu$}. $$ It follows from \eqref{b18} that
$$0\leq\psi_0(x,y)\leq\min\left\{\f{x^2}{x_\mu^2}m_0,m_0\right\}\ \ \text{in}\ \ \O_\mu,$$ which together with \eqref{b17} gives that $$\g\cdot f_\eta\left(\f{\nabla\psi_0}{x};\ld\right)=0\ \ \text{in}\ \ \O_\mu\cap\left\{x<\f{x_\mu}2\right\}.$$
Next, we will check that $J_{\ld,\mu}(\psi_0)<+\infty$. By virtue of the proof of Lemma \ref{lb1}, it suffices to show that
$$\f{|\g\psi_0(X)|}{x}\leq C\ \ \text{near $I$}.$$ For any $X_0=(x_0,y_0)\in\O_\mu$ with $x_0<\f{x_\mu}4$, denote $\phi(X)=\f{\psi_0(X_0+r_0X)}{r^2_0}$ with $r_0=\f{x_0}2$. It is easy to check that
$$\g\cdot f_\eta\left(\f{\nabla\phi}{2+x};\ld\right)=0\ \ \text{and}\ \ 0\leq\phi\leq(2+x)^2m_0\ \ \text{in}\ \ B_1(0).$$
Thanks to the gradient estimate in Chapter 12 in \cite{GT}, one has
$$|\g\phi(0)|\leq C,$$ where the constant $C$ is independent of $x_0$. This gives that
$$|\g\psi_0(X_0)|=r_0|\g\phi(0)|\leq Cx_0.$$

Since $J_{\ld,\mu}(\psi_0)<+\infty$ and $\psi_0\in K_{\mu}$, the minimal functional $J_{\ld,\mu}(\psi)$ is finite. By using the proof of Lemma 1.1 in \cite{ACF3}, we can conclude that there exists a minimizer to the variational problem $(P_{\ld,\mu})$.

Denote $\psi_{\ld,\mu}$ be the minimizer to the variational problem $(P_{\ld,\mu})$ and $\Gamma_{\ld,\mu}=E\cap\p\{\psi_{\ld,\mu}<m_0\}$ as the free boundary of $\psi_{\ld,\mu}$. Thanks to Lemma \ref{lb1}, we can show that $\psi_{\ld,\mu}\in C^{0,1}(\O_\mu)$ satisfies the assertion (1) of this lemma.

(2) Since $F(t;\ld)$ is $C^2$-smooth with respect to $t\in[0,+\infty)$, it
follows from Theorem 6.3 in \cite{ACF4} that the free boundary $\Gamma_{\ld,\mu}$ is $C^{1,\alpha}$, and thus $\psi_{\ld,\mu}$ is $C^{1,\alpha}$ up to the free boundary. Since $\ld\leq\Pi_\ld-3\t\e$, the subsonic cut-off can be removed near the free boundary. Then $F(t;\ld)$ is analytic near $\Gamma_{\ld,\mu}$, the Remark 6.4 in \cite{ACF4} gives that the free boundary $\Gamma_{\ld,\mu}$ is analytic.
By using the similar arguments in the proof of Lemma 9.1 in \cite{CDW}, we can conclude that $$\f1x|\g\psi_{\ld,\mu}|=\ld\ \ \text{on}\ \ \Gamma_{\ld,\mu},\ \ \text{ and}\ \ \f1x|\g\psi_{\ld,\mu}|\geq\ld\ \ \text{on}\ \ l,$$ where $l$ is a segment
with $l\subset T\cap\p\{\psi_{\ld,\mu}<m_0\}$.
  \end{proof}

  Next, we will give the bounded gradient lemma in the following.
\begin{lem}\label{lb3} Let $X_0=(x_0,y_0)$ be a free boundary point
and let $B_r(X_0)\subset B_R(X_0)\subset E$ with $r<R$. Then
$$|\g\psi_{\ld,\mu}(X)|\leq C\Lambda x\ \ \text{in $B_r(X_0)$,}$$ where the constant $C$ depends only on $\vartheta, N, N_0$ and $\left(1-\f rR\right)^{-1}$, but not on
$m_0$.
\end{lem}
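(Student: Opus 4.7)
The plan is to follow the Alt--Caffarelli strategy for Bernoulli-type free boundary problems: reduce the gradient bound to a one-sided $L^{\infty}$ estimate on $u:=m_0-\psi_{\ld,\mu}$ near the free boundary via standard interior elliptic regularity, and then establish that $L^{\infty}$ bound by exploiting the minimality of $\psi_{\ld,\mu}$ through comparison with a suitable harmonic replacement.

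\medskip

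\emph{Step 1 (Setup).} Set $u:=m_0-\psi_{\ld,\mu}$, so $u\ge 0$ on $B_R(X_0)$, $u(X_0)=0$ since $X_0\in\Gamma_{\ld,\mu}$, and $u$ is $C^{2,\alpha}$ on $\{u>0\}$. The hypothesis $B_R(X_0)\subset E$ forces $x\ge b>0$ on $B_R(X_0)$, with $x$ comparable to the first coordinate $x_0$ of $X_0$ (ratio controlled by $(1-r/R)^{-1}$ and the geometry of $N,N_0$). By Lemma \ref{lb2}, $u$ solves a uniformly elliptic quasilinear divergence-form equation in $\{u>0\}\cap B_R(X_0)$ with the Bernoulli condition $|\nabla u|/x=\ld$ on $\Gamma_{\ld,\mu}\cap B_R(X_0)$.

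\medskip

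\emph{Step 2 (Reduction to an $L^{\infty}$ bound).} Fix $Y=(x_Y,y_Y)\in B_r(X_0)$ with $u(Y)>0$, and let $d:=\mathrm{dist}(Y,\{u=0\})$. Interior gradient estimates for quasilinear elliptic equations (Chapter 12 of \cite{GT}) applied on $B_{d/2}(Y)\subset\{u>0\}$ yield
\[
|\nabla u(Y)|\le\frac{C}{d}\sup_{B_{d/2}(Y)}u,
\]
where $C$ depends only on $\vartheta$. It therefore suffices to prove the one-sided estimate
\[
\sup_{B_{d/2}(Y)}u\le C\Ld\, x_Y\, d.
\]

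\medskip

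\emph{Step 3 (The $L^{\infty}$ bound via minimality).} Pick $Z\in\{u=0\}$ realizing $d=|Y-Z|$ and set $\rho\sim d$. Let $h$ be the $\t Q_\ld$-harmonic extension to $B_\rho(Z)$ of $u|_{\partial B_\rho(Z)}$. Because $0\le\psi_{\ld,\mu}\le m_0$, the competitor $\t\psi:=\max\{\psi_{\ld,\mu},\,m_0-h\}$ lies in $K_\mu$, and the set where $\t\psi\ne\psi_{\ld,\mu}$ is contained in $B_\rho(Z)\cap\{u>h\}$. Inserting $\t\psi$ into the minimizing inequality $J_{\ld,\mu}(\t\psi)\ge J_{\ld,\mu}(\psi_{\ld,\mu})$ and expanding using the two-sided quadratic bounds on $F$ provided by \eqref{b12} and the convexity of $F$, the quadratic part produces a weighted Caccioppoli-type inequality while the indicator term $x\Ld^2\chi_{\{\psi_{\ld,\mu}<m_0\}}$ supplies a contribution of size $\Ld^2\,x_0\,|\{u>h\}\cap B_\rho(Z)|$. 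Combining this with the mean-value property of $h$ at $Z$ (where $h(Z)=u(Z)=0$) and a covering argument in the spirit of Alt--Caffarelli converts the integral inequality into the required pointwise bound.

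\medskip

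\emph{Main obstacle.} The principal technical difficulty is the $x$-weight appearing in both the energy integrand $G$ and the Bernoulli condition $|\nabla u|/x=\ld$: a verbatim transcription of the constant-coefficient Alt--Caffarelli argument would miss the multiplicative factor $x$ on the right-hand side of the desired estimate. This is made tractable by $B_R(X_0)\subset E$, which keeps $x\approx x_0$ throughout $B_R$, so after freezing $x$ one recovers the classical argument with the correct $\Ld x$ scaling. A secondary subtlety is that the competitor $\t\psi$ must respect the upper constraint $\psi\le m_0$ intrinsic to $K_\mu$; the choice $\t\psi=\max\{\psi_{\ld,\mu},\,m_0-h\}$ achieves this, and since this construction references $m_0$ only through a constraint (not through a quantitative estimate), the resulting constant depends only on $\vartheta, N, N_0$ and $(1-r/R)^{-1}$, as claimed.
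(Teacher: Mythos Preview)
Your overall strategy---reduce the gradient bound to a linear growth estimate $u(X)\le C\Lambda\, x\,d(X)$ near the free boundary and then invoke interior gradient estimates for the quasilinear equation---matches the paper's. The gap is in Step~3.

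First, the competitor you propose is trivial. By Lemma~\ref{lb2}(1), $\psi_{\ld,\mu}$ is a weak supersolution of $\t Q_\ld$ on all of $\O_\mu$, so $u=m_0-\psi_{\ld,\mu}$ is a weak subsolution. The $\t Q_\ld$-harmonic extension $h$ of $u|_{\p B_\rho(Z)}$ therefore satisfies $u\le h$ in $B_\rho(Z)$ by comparison, which gives $m_0-h\le\psi_{\ld,\mu}$ and hence $\t\psi=\max\{\psi_{\ld,\mu},m_0-h\}\equiv\psi_{\ld,\mu}$. The inequality $J_{\ld,\mu}(\t\psi)\ge J_{\ld,\mu}(\psi_{\ld,\mu})$ is then vacuous; in particular $\{u>h\}=\varnothing$, so the indicator contribution you invoke vanishes. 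Relatedly, the assertion $h(Z)=0$ is false: since $u\ge 0$ and $u\not\equiv 0$ on $\p B_\rho(Z)$, the strong maximum principle gives $h(Z)>0$.

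What the paper does (following \cite{AC1} and \cite{ACF4}) is the opposite replacement. After rescaling $\phi(X)=(m_0-\psi_{\ld,\mu}(X_1+dX))/(dx_1)$ about a point $X_1$ with $d=d(X_1)$ and selecting a free-boundary point $\t X\in\p B_1(0)$, one takes the harmonic replacement $\Psi$ of $\phi$ on $B_1(\t X)$; since $\phi$ is a subsolution, $\Psi\ge\phi$, so the corresponding competitor for $\psi_{\ld,\mu}$ lies \emph{below} $\psi_{\ld,\mu}$, and minimality yields the energy estimate $\int_{B_1(\t X)}|\g(\Psi-\phi)|^2\le C\Lambda^2\int_{B_1(\t X)}\chi_{\{\phi=0\}}$. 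The second, equally essential, ingredient you omit is a quantitative lower bound on $\Psi$ near $\t X$: one assumes $\phi(0)>M$, spreads this to $\phi\ge cM$ on $B_{3/4}(0)$ via Harnack's inequality, transfers it to $\Psi$, and then uses an explicit radial barrier to obtain $\Psi(X)\ge cM(1-|X-\t X|)$ in $B_1(\t X)$. It is the combination of this linear lower bound with the energy estimate (as in Lemma~3.2 of \cite{AC1} and Lemma~2.2 of \cite{ACF4}) that forces $M\le C\Lambda$. Your Step~3 contains neither the correct competitor nor the Harnack and barrier steps, so the central $L^\infty$ bound remains unproved.
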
\begin{figure}[!h]
\includegraphics[width=100mm]{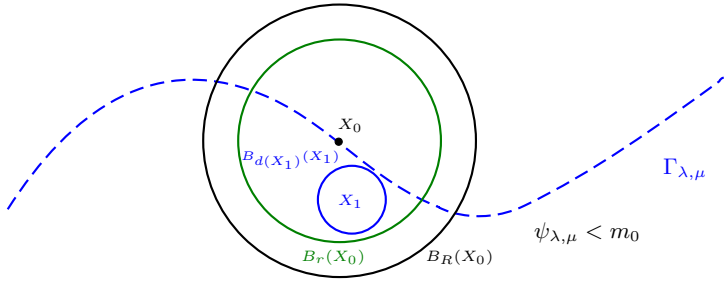}
\caption{$B_{d(X_1)}(X_1)$ and $B_R(X_0)$}\label{fi5}
\end{figure}

\begin{proof} {\bf Step 1.} In this step, we will show that
\be\label{b21}m_0-\psi_{\ld,\mu}(X)\leq C\Lambda
xd(X) \ \ \text{for any $X\in B_{r}(X_0)$},\ee where $d(X)=dist(X,\Gamma_{\ld,\mu})$ and the constant $C$ depends only on $\vartheta, b$ and $\left(1-\f rR\right)^{-1}$.

Denote $d_0=R-r$. Suppose that $X_1=(x_1,y_1)\in B_r(X_0)\cap\{\psi<m_0\}$ and $d(X_1)<d_0$. Thus $B_{d(X_1)}(X_1)\subset B_R(X_0)\cap\{\psi_{\ld,\mu}<m_0\}$ (please see Figure \ref{fi5}). Next, we assume that
\be\label{b22}m_0-\psi_{\ld,\mu}(X_1)>Md(X_1)x_1,\ee and we will derive an upper bound of $M$ in the following.
Denote \be\label{b23}\text{$\phi(X)=\f{m_0-\psi_{\ld,\mu}(X_1+dX)}{dx_1}$ with $d=d(X_1)$,}\ee and one has $$\g\cdot f_\eta\left(\f{x_1\g\phi}{x_1+d x};\ld\right)=0  \ \text{in}~~B_{1}(0). $$
 It follows from \eqref{b22} that $$\phi(0)>M.$$
It follows from Harnack's inequality (see Theorem 8.20 in
\cite{GT}) that \be\label{b24}\phi(X)\geq cM\ \ \text{in
$B_{\f34}(0)$ for some $c>0$},\ee where the constant $c$ is
independent of $d$ and $x_1$. On another hand, there exists a
$\t X=(\t x,\t y)\in\p B_{1}(0)\cap\Gamma_{\ld,\mu}$. Define a function $\Psi$, which satisfies that $$\left\{\ba{ll} &\g\cdot f_\eta\left(\f{x_1\g\Psi}{x_1+d x};\ld\right)=0  \ \text{in}~~B_{1}(\t X),\\
&\Psi=\phi\ \ \
\text{outside} \ \  B_{1}(\t X). \ea\right. $$ Since $\g\cdot f_\eta\left(\f{x_1\g\phi}{x_1+d x};\ld\right)\geq 0$ in $B_{1}(\t X)$, the maximum principle gives that \be\label{b25}\text{$\phi\leq\Psi$ in $B_1(\t X)$}.\ee Then we have
$$\ba{rl}0\leq&\int_{B_1(\t X)}(x_1+dx)\left\{F\left(\f{|x_1\nabla\Psi|^2}{(x_1+dx)^2};\ld\right)-F\left(\f{|x_1\nabla\phi|^2}{(x_1+dx)^2};\ld\right)\right\}dxdy\\
&-2\ld F_1(\ld^2;\ld)\int_{B_1(\t X)}\g(\Psi-\phi)\cdot e dxdy+\Ld^2\int_{B_1(\t X)}(x_1+dx)(\chi_{\{\Psi>0\}}-\chi_{\{\phi>0\}})dxdy\\
\leq&\int_{B_1(\t X)}-\vartheta\f{x_1^2|\g(\Psi-\phi)|^2}{x_1+dx}+f_\eta\left(\f{x_1\nabla\Psi}{x_1+dx};\ld\right)\cdot\g(\Psi-\phi)+\Ld^2(x_1+dx)\chi_{\{\phi=0\}}dxdy\\
=&-\vartheta\int_{B_1(\t X)}\f{x_1^2|\g(\Psi-\phi)|^2}{x_1+dx}dxdy
+\Ld^2\int_{B_1(\t X)}(x_1+dx)\chi_{\{\phi=0\}}dxdy,\ea$$ which implies that
\be\label{b26}\ba{rl}\int_{B_1(\t X)}|\g(\Psi-\phi)|^2dxdy\leq& C\int_{B_1(\t X)}\f{x_1|\g(\Psi-\phi)|^2}{x_1+dx}dxdy\\
\leq& \f{C\Ld^2}{x_1}\int_{B_1(\t X)}(x_1+dx)\chi_{\{\phi=0\}}dxdy\\
\leq& C\Ld^2\int_{B_1(\t X)}\chi_{\{\phi=0\}}dxdy,\ea\ee where $C$ is a constant depending only on $\vartheta$ and $\left(1-\f rR\right)^{-1}$.

It follows from \eqref{b24} and \eqref{b25} that
$$\Psi(X)\geq\phi(X)\geq cM\ \ \text{in}\ \ B_{\f34}(0)\cap B_{1}(\t X).$$ Applying Harnack's inequality for $\Psi$ in $B_1(\t X)$, one has \be\label{b27}\Psi(X)\geq C_0\ \ \text{in}\ \  B_{\f12}(\t X),\ \ \ C_0=cM.\ee Define $\varphi(X)=C_0\left(e^{-\nu|X-\t X|^2-e^{-\nu}}\right)$, after a direct computation, we have $$\g\cdot f_\eta\left(\f{x_1\g\varphi}{x_1+d x};\ld\right)=\f{2\nu C_0 x_1 e^{-\nu|\xi|^2}}{x_1+dx}\left(f_{\eta_i\eta_j}(2\nu \xi_i\xi_j-\delta_{ij})+\f{d(f_{\eta_1\eta_1}\xi_1+f_{\eta_1\eta_2}\xi_2)}{x_1+dx}\right)>0$$ in $B_1(\t X)\setminus B_{\f12}(\t X)$, provided that $\nu$ is large enough, where $f_{\eta_i\eta_j}=f_{\eta_i\eta_j}\left(\f{x_1\g\varphi}{x_1+d x};\ld\right)$ and $\xi=(\xi_1,\xi_2)=X-\t X$ for $i,j=1,2$.

It is easy to check that
$$\Psi\geq\varphi\ \ \text{on}\ \ \p(B_1(\t X)\setminus B_{\f12}(\t X)).$$ The maximum principle gives that
 $$\Psi(X)\geq\varphi(X)=C_0\left(e^{-\nu|X-\t X|^2-e^{-\nu}}\right)\geq cC_0(1-|X-\t X|)\ \ \text{in}\ \ B_1(\t X)\setminus B_{\f12}(\t X),$$ which together with \eqref{b27} gives that  \be\label{b28}\Psi(X)\geq cM(1-|X-\t X|)\ \ \text{in}\ \ B_1(\t X)\setminus B_{\f12}(\t X).\ee

With the aid of \eqref{b26} and \eqref{b28}, along the similar
arguments in the proof of Lemma 3.2 in \cite{AC1} and Lemma 2.2 in
\cite{ACF4}, one has
$$M^2\leq C\Ld^2,$$ where the constant $C$ depends only on $\vartheta, N, N_0$ and $\left(1-\f rR\right)^{-1}$. This implies that
\be\label{b29}m_0-\psi_{\ld,\mu}(X_1)\leq C\Ld d(X_1)x_1.\ee

Take any point $X_2=(x_2,y_2)\in B_r(X_0)$ such that $d(X_2)>d_0$ and there exists a point $X_1\in B_{\f{d_0}2}(X_2)$ with $d(X_1)<d_0$. By using Harnack's inequality for $m_0-\psi_{\ld,\mu}$ in $B_{d_0}(X_2)$ and \eqref{b29}, one has
$$m_0-\psi_{\ld,\mu}(X_2)\leq C(m_0-\psi_{\ld,\mu}(X_1))\leq C\Ld d(X_1)x_1\leq C\Ld d(X_2)x_2.$$  For any $X\in B_r(X_0)$, we can repeat this argument step by step, and after a finite steps $N$ (depending only on $\vartheta, N, N_0$ and $\left(1-\f rR\right)^{-1}$), such that $$m_0-\psi_{\ld,\mu}(X)\leq C\Ld d(X)x.$$ Hence, we complete the proof of \eqref{b21}.

{\bf Step 2.} In this step, we will complete the proof of this lemma. For any $X_1\in B_r(X_0)$, denote $d_0=R-r$ and $d(X)=dist(X,\Gamma_{\ld,\mu})$, and we consider the following two cases.

{\bf Case 1}. $d(X_1)<d_0$. Then it follows from \eqref{b21} that
$$\g\cdot f_\eta\left(\f{x_1\g\phi}{x_1+d x};\ld\right)=0 \ \ \text{and}\ \ 0\leq \phi\leq \f{C\Ld(x_1+dx)d(X_1+dX)}{dx_1}\leq C\Ld \ \text{in}~~B_{1}(0),$$ where $\phi$ and $d$ are defined in \eqref{b23}, the constant $C$ depends only on $\vartheta, N, N_0$ and $\left(1-\f rR\right)^{-1}$. Applying the elliptic estimate for the quasilinear equation in \cite{GT}, one has $$|\g\phi(0)|\leq C,$$ which gives that $$|\g\psi_{\ld,\mu}(X_1)|=x_1|\g\phi(0)|\leq Cx_1.$$

{\bf Case 2}. $d(X_1)\geq d_0$. Obviously, $B_{d_0}(X_1)\subset B_R(X_0)\cap\{\psi_{\ld,\mu}<m_0\}$. Denote $\phi_0(X_1)=\f{m_0-\psi_{\ld,\mu}(X_1+d_0X)}{d_0x_1}$, it follows from \eqref{b23} that $$\g\cdot f_\eta\left(\f{x_1\g\phi_0}{x_1+d_0 x};\ld\right)=0 \ \ \text{and}\ \ 0\leq \phi_0\leq \f{C\Ld(x_1+d_0x)d(X_1+d_0X)}{d_0x_1}\leq C\Ld \ \text{in}~~B_{1}(0).$$ By using the elliptic estimate for $\phi_0$ in $B_1(0)$, one has $$|\g\phi_0(0)|\leq C\ \ \text{and}\ \ |\g\psi_{\ld,\mu}(X_1)|=x_1|\g\phi_0(0)|\leq Cx_1.$$

\end{proof}

With the aid of Lemma \ref{lb3}, applying the similar arguments in the proof of Lemma 2.4 in \cite{ACF4}, we can obtain the following lemma.

\begin{lem}\label{lb4}
There exists a
positive constant $C^*$, such that for any disc
$B_r(X_0)\subset \subset\O_\mu$ with $X_0=(x_0,y_0)$, $r\leq \f{x_0}{2}$, then
$$\f{1}{r}\fint_{\p B_r(X_0)}(m_0-\psi_{\ld,\mu})dS\geq C^*\Lambda x_0,$$ implies that
$$\psi_{\ld,\mu}<m_0~~~~\text{in}~~B_r(X_0).$$
\end{lem}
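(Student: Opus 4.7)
The plan is to argue by contradiction: suppose there exists $X^*\in B_r(X_0)$ with $\psi_{\ld,\mu}(X^*)=m_0$, construct a suitable competitor by lifting $\psi_{\ld,\mu}$ inside $B_r(X_0)$, and exploit the minimality of $\psi_{\ld,\mu}$ to force an upper bound on $\tfrac{1}{r}\fint_{\p B_r(X_0)}(m_0-\psi_{\ld,\mu})\,dS$ that contradicts the hypothesis once $C^*$ is chosen large enough.

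First, I would construct the comparison function. Let $v$ be the unique solution of $\t Q_\ld v=0$ in $B_r(X_0)$ with boundary data $v=\psi_{\ld,\mu}$ on $\p B_r(X_0)$. Since $\psi_{\ld,\mu}$ is a supersolution in the sense of the variational inequality stated in Lemma \ref{lb2}(1), the comparison principle for the uniformly elliptic operator $\t Q_\ld$ (uniform ellipticity holds on $B_r(X_0)\cin\O_\mu$ because $r\le x_0/2$ keeps us away from the axis) gives $v\ge\psi_{\ld,\mu}$ in $B_r(X_0)$. Set $w=\min(v,m_0)$: then $w\in K_\mu$, $w\equiv\psi_{\ld,\mu}$ outside $B_r(X_0)$, and $w\ge\psi_{\ld,\mu}$ everywhere. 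Under the contradiction hypothesis the coincidence set $A:=\{v\ge m_0\}\cap B_r(X_0)$ is nonempty (it contains $X^*$), and $\{w<m_0\}\subset\{\psi_{\ld,\mu}<m_0\}$ with symmetric difference exactly $A$.

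Second, I would plug $w$ into the minimality $J_{\ld,\mu}(\psi_{\ld,\mu})\le J_{\ld,\mu}(w)$ and split the estimate into the strictly convex Dirichlet part $xF(|\nabla\cdot/x|^2;\ld)$ and the inhomogeneous part $(x\Ld^2-2\ld F_1(\ld^2;\ld)\nabla\cdot\;\cdot e)\chi_{\{\cdot<m_0\}\cap E}$. For the Dirichlet part, using the Euler--Lagrange equation for $v$ together with the convexity and the ellipticity bound $\vartheta|\xi|^2\le f_{\eta_i\eta_j}\xi_i\xi_j$, one extracts a lower bound of the form
\[
\int_{B_r(X_0)}\frac{|\nabla(v-\psi_{\ld,\mu})|^2}{x}\,dxdy\;\le\; C\Ld^2\int_A x\,dxdy\;\le\;C\Ld^2 x_0|A|.
\]
For the inhomogeneous part, since $\nabla w=0$ on $A$ and $w=\psi_{\ld,\mu}$ on $\p B_r(X_0)$, the linear $\nabla\psi\cdot e$ terms are handled by the divergence theorem and the boundary identity, while the $x\Ld^2$ indicator term contributes exactly $\Ld^2\int_A x\,dxdy$ on the unfavorable side. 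The bounded gradient Lemma \ref{lb3} guarantees $|\nabla\psi_{\ld,\mu}|\le C\Ld x$ in $B_r(X_0)$, so all terms scale with the correct power $\Ld^2 x_0\cdot(\text{area})$.

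Third, I would translate the contradiction hypothesis $\psi_{\ld,\mu}(X^*)=m_0$ into a pointwise statement about $v$ and then convert it, via a Poisson-type representation for the linearization of $\t Q_\ld$ on $B_r(X_0)$, into the estimate $v(X^*)-\psi_{\ld,\mu}(X^*)\ge c\fint_{\p B_r(X_0)}(v-\psi_{\ld,\mu})\,dS$ with $c=c(\vartheta)>0$. Combined with the energy bound from Step 3 and a standard Poincar\'e inequality on $B_r(X_0)$ yielding
\[
\fint_{\p B_r(X_0)}(m_0-\psi_{\ld,\mu})\,dS\;\le\;C\,r\Bigl(\fint_{B_r(X_0)}|\nabla(v-\psi_{\ld,\mu})|^2\,dxdy\Bigr)^{1/2}+\text{l.o.t.},
\]
this produces
\[
\frac{1}{r}\fint_{\p B_r(X_0)}(m_0-\psi_{\ld,\mu})\,dS\;\le\;C\Ld\sqrt{\tfrac{|A|}{|B_r(X_0)|}}\,x_0\;\le\;C\Ld x_0,
\]
which directly contradicts the hypothesis once $C^*$ is chosen larger than this universal constant $C$.

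The main obstacle I anticipate is Step 2: the $x$-weight makes the functional inhomogeneous, and one must track carefully how the linear term $2\ld F_1(\ld^2;\ld)\nabla\psi\cdot e$ interacts with the replacement $\psi_{\ld,\mu}\rightsquigarrow w$ on the region $A$ where $w$ suddenly becomes the constant $m_0$. The boundary identity $v=\psi_{\ld,\mu}$ on $\p B_r(X_0)$, together with the harmonic character of $\nabla\cdot e$ (it is a constant vector field, so its divergence is zero), allows this term to be rewritten as a boundary flux that vanishes; this is the one place where the argument genuinely differs from the planar incompressible ACF template and is where the factor $x_0$ (as opposed to $r$) first appears on the right-hand side of the threshold.
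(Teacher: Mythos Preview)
Your construction has the comparison inequality reversed, and this error unravels the scheme at the first step. The variational inequality in Lemma~\ref{lb2}(1), namely $\int f_\eta(\nabla\psi_{\ld,\mu}/x;\ld)\cdot\nabla\xi\,dxdy\ge 0$ for all $\xi\ge 0$, is the weak form of $\t Q_\ld\psi_{\ld,\mu}\le 0$: the minimizer is a \emph{supersolution}, not a subsolution. Hence for the solution $v$ of $\t Q_\ld v=0$ in $B_r(X_0)$ with $v=\psi_{\ld,\mu}$ on $\p B_r(X_0)$, the comparison principle yields $v\le\psi_{\ld,\mu}\le m_0$, not $v\ge\psi_{\ld,\mu}$. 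Consequently $w=\min(v,m_0)=v$, the set $A=\{v\ge m_0\}$ is empty by the strong maximum principle (unless $\psi_{\ld,\mu}\equiv m_0$ on $\p B_r(X_0)$, in which case the boundary average is zero and the hypothesis already fails), and the competitor carries no information. Even granting the intended inequality, your third step does not close: since $v=\psi_{\ld,\mu}$ on $\p B_r(X_0)$, the quantity $\fint_{\p B_r}(v-\psi_{\ld,\mu})\,dS$ vanishes identically, so the ``Poisson-type'' estimate is empty, and the proposed Poincar\'e inequality cannot link $m_0-\psi_{\ld,\mu}$ on $\p B_r$ with $\nabla(v-\psi_{\ld,\mu})$ in the interior.

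The paper's route is far more direct and uses no competitor: it simply invokes Lemma~2.4 of \cite{ACF4} together with the Lipschitz bound of Lemma~\ref{lb3}. If $\psi_{\ld,\mu}(X^*)=m_0$ for some $X^*\in B_r(X_0)$, then every $X\in\p B_r(X_0)$ is within distance $2r$ of the coincidence set, and the estimate \eqref{b21} (equivalently the gradient bound $|\nabla\psi_{\ld,\mu}|\le C\Lambda x$) gives $m_0-\psi_{\ld,\mu}(X)\le C\Lambda x_0\,r$ on $\p B_r(X_0)$; dividing by $r$ yields $\tfrac{1}{r}\fint_{\p B_r}(m_0-\psi_{\ld,\mu})\,dS\le C\Lambda x_0$, contradicting the hypothesis once $C^*>C$. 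The competitor template you attempted is the one suited to the companion non-degeneracy Lemma~\ref{lb5}, where one replaces $\psi_{\ld,\mu}$ by something \emph{larger} and exploits minimality in the opposite direction.
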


We next establish a non-degeneracy lemma.

\begin{lem}\label{lb5} There is a universal constant $c^*>0$ such that for any disc $B_{r}(X_0)$ with center $X_0=(x_0,y_0)\in E$ and $r\leq \f{x_0}{2}$, then
\be\label{b30}\f{1}{r}\left(\fint_{B_{r}(X_0)}(m_0-\psi_{\ld,\mu})^2dxdy\right)^{\f{1}{2}}\leq c^*\Lambda x_0,\ee implies that
$$\psi_{\ld,\mu}=m_0~~~~\text{in}~~B_{\f r8}(X_0)\cap E.$$
\end{lem}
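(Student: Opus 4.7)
The plan is to mimic the Alt--Caffarelli--Friedman non-degeneracy strategy for Bernoulli-type free boundaries (see Lemma~3.5 in \cite{ACF4}), adapted to the axisymmetric/weighted functional $J_{\ld,\mu}$. The idea is to construct an explicit admissible competitor that equals $m_0$ on $B_{r/8}(X_0)$, exploit minimality of $\psi_{\ld,\mu}$, and then use the $L^2$-smallness hypothesis to force the coincidence set to absorb $B_{r/8}(X_0)\cap E$.

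First I would fix a smooth radial cutoff $\tau\in C_0^\infty(B_{r/4}(X_0))$ with $\tau\equiv 1$ on $B_{r/8}(X_0)$ and $|\nabla\tau|\leq C/r$, and take
$$\tilde\psi=\psi_{\ld,\mu}+\tau\,(m_0-\psi_{\ld,\mu}).$$
Since $0\leq\psi_{\ld,\mu}\leq m_0$ by \eqref{b20} and Lemma~\ref{lb2}, we have $\tilde\psi\in K_\mu$, $\tilde\psi=m_0$ on $B_{r/8}(X_0)$, and $\tilde\psi=\psi_{\ld,\mu}$ outside $B_{r/4}(X_0)$. The key inclusion is $\{\tilde\psi<m_0\}\cap E\subset\bigl(B_{r/4}(X_0)\setminus B_{r/8}(X_0)\bigr)\cup\bigl(\{\psi_{\ld,\mu}<m_0\}\setminus B_{r/8}(X_0)\bigr)$, so the contribution of the indicator-function piece of $G$ strictly decreases on $B_{r/8}(X_0)\cap E\cap\{\psi_{\ld,\mu}<m_0\}$.

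Next, writing out $0\leq J_{\ld,\mu}(\tilde\psi)-J_{\ld,\mu}(\psi_{\ld,\mu})$ and using the comparability $x\sim x_0$ on $B_r(X_0)$ (which is granted by $r\leq x_0/2$), I would bound the bulk $F$-difference by convexity of $f(\eta;\ld)$: the linear term vanishes in the subsonic region by \eqref{b17}, leaving a quadratic remainder dominated by $C\vartheta^{-1}x_0^{-1}\int|\nabla\tau|^2(m_0-\psi_{\ld,\mu})^2\,dxdy$, which by the hypothesis \eqref{b30} is at most $C(c^*)^2\Lambda^2 x_0 r^2$. The linear transport term $-2\ld F_1(\ld^2;\ld)\int\nabla(\tilde\psi-\psi_{\ld,\mu})\cdot e\,\chi_{\{\cdot<m_0\}\cap E}\,dxdy$ is handled by integrating by parts on the annulus, where it becomes $O(r^{-1}\int_{B_{r/4}}(m_0-\psi_{\ld,\mu}))$, again absorbed into $C(c^*)\Lambda x_0 r^2$ by Cauchy--Schwarz and \eqref{b30}. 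On the other hand, the $\chi$-gain provides a positive contribution of size $\Lambda^2 x_0\,|B_{r/8}(X_0)\cap E\cap\{\psi_{\ld,\mu}<m_0\}|$. Combining,
$$\Lambda^2 x_0\,|B_{r/8}(X_0)\cap E\cap\{\psi_{\ld,\mu}<m_0\}|\leq C\bigl((c^*)^2+c^*\bigr)\Lambda^2 x_0 r^2.$$
Choosing $c^*$ sufficiently small that the constant on the right is strictly less than $|B_{r/8}|$ forces the left-hand measure to vanish, hence $\psi_{\ld,\mu}=m_0$ a.e. on $B_{r/8}(X_0)\cap E$, and continuity upgrades this to everywhere.

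The main obstacle I anticipate is bookkeeping the transport piece $-2\ld F_1(\ld^2;\ld)\nabla\psi\cdot e\,\chi_{\{\psi<m_0\}\cap E}$, which is absent in the classical flat ACF setting and which, unlike the bulk term, cannot be dropped by pointwise convexity. The point is that after integration by parts it produces only boundary contributions supported in the annulus where $|\tilde\psi-\psi_{\ld,\mu}|\lesssim\tau(m_0-\psi_{\ld,\mu})$, so it again only sees the $L^2$-small quantity from \eqref{b30}; verifying that this argument survives the interface $\partial E=\{x=b\}$ intersecting $B_r(X_0)$ when $X_0$ is close to $T$ is the delicate point, but the a.e. bound suffices because the line $\{x=b\}$ has zero planar measure.
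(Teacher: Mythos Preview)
Your competitor $\tilde\psi=\psi_{\ld,\mu}+\tau(m_0-\psi_{\ld,\mu})$ is natural, but the argument breaks at two places.

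First, the quadratic remainder in the convexity expansion of the $F$-difference is $\int x^{-1}\bigl|\nabla[\tau(m_0-\psi_{\ld,\mu})]\bigr|^2$, and this contains not only $\int|\nabla\tau|^2(m_0-\psi_{\ld,\mu})^2$ but also $\int\tau^2|\nabla\psi_{\ld,\mu}|^2$; you have silently dropped the second piece. It can in fact be controlled by a Caccioppoli inequality for the nonnegative subsolution $m_0-\psi_{\ld,\mu}$ (by Lemma~\ref{lb2}(1) the minimizer is a supersolution in all of $\O_\mu$), giving $\int_{B_{r/2}}x^{-1}|\nabla\psi_{\ld,\mu}|^2\leq Cr^{-2}\int_{B_r}x^{-1}(m_0-\psi_{\ld,\mu})^2\leq C(c^*)^2\Lambda^2x_0r^2$, but this step is missing from your sketch.

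Second, and fatally, your conclusion does not follow from your inequality. From
\[
\Lambda^2 x_0\,\bigl|B_{r/8}(X_0)\cap E\cap\{\psi_{\ld,\mu}<m_0\}\bigr|\leq C\bigl((c^*)^2+c^*\bigr)\Lambda^2 x_0 r^2
\]
you obtain only that the measure on the left is \emph{small}, not zero; the right-hand side is a fixed number that does not involve the set on the left, so there is nothing to absorb. The paper's proof avoids exactly this by using a different competitor: instead of a cutoff one lets $\psi_0$ solve the quasilinear equation in an annulus $B_{2r_1}\setminus B_{r_1}$ with $\psi_0=m_0$ in $\overline{B_{r_1}}$ and $\psi_0=\psi_{\ld,\mu}$ outside, and compares with $\max\{\psi_0,\psi_{\ld,\mu}\}$. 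Convexity plus integration by parts (now exact, since $\psi_0$ solves the equation) puts the full energy $\int_{B_{r_1}}xF(|\nabla\psi_{\ld,\mu}/x|^2)+\Lambda^2\int_{B_{r_1}}x\chi_{\{\psi_{\ld,\mu}<m_0\}}$ on the gain side and leaves only a boundary integral $\int_{\partial B_{r_1}}(m_0-\psi_{\ld,\mu})\,x^{-1}\nabla\psi_0\cdot\nu\,dS$ on the loss side. After rescaling, the $L^2$-hypothesis is upgraded to $L^\infty$ by the subsolution estimate (Theorem~8.17 in \cite{GT}), elliptic regularity controls $|\nabla\psi_0\cdot\nu|$, and the trace theorem together with Young's inequality turns the boundary term into $C\delta\Lambda\cdot\bigl(\text{const}\cdot\int\chi_{\{\psi_{\ld,\mu}<m_0\}}+\epsilon\int|\nabla\psi_{\ld,\mu}|^2\bigr)$. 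The $\chi$-integral on the right is now absorbed into the same integral on the left, yielding $(1-C\delta^2)\int\chi_{\{\psi_{\ld,\mu}<m_0\}}\leq 0$ and hence vanishing measure. That absorption step is the structural feature your cutoff argument lacks.
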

\begin{proof}
It is easy to check that the set
$$B_{\f r 8}(X_0)\cap E$$ can be covered by discs of the form
$$B_{r_1}(X_1)\subset B_{\f r4}(X_0)\cap E\ \ \text{with}\ \ r_1=\f{r}{16}.$$ Thus, it suffices to show that $\psi_{\ld,\mu}=m_0$ in any discs $B_{r_1}(X_1)\subset B_{\f r4}(X_0)\cap E$, provided that the assumption \eqref{b30} holds. Let $\psi_0$ solves the following boundary value problem
\be\label{b31}\left\{\begin{array}{ll}&\g\cdot f_\eta\left(\f{\nabla
\psi_0}{x};\ld\right)=0\ \ \text{in}\ B_{2r_1}(X_1)\setminus B_{r_1}(X_1),\\
&\psi_0=m_0\ \text{in}\ \overline{B_{r_1}(X_1)},\ \psi_0=\psi_{\ld,\mu}\ \text{outside of}\ B_{2r_1}(X_1).\end{array}\right.\ee Obviously, $\max\{\psi_0,\psi_{\ld,\mu}\}\in K_\mu$, and thus
\be\label{b32}\ba{rl}0\leq& J_{\ld,\mu}(\max\{\psi_{\ld,\mu},\psi_0\})-J_{\ld,\mu}(\psi_{\ld,\mu})\\
=&\int_{B_{2r_1}(X_1)} xF\left(\left|\f{\g\max\{\psi_{\ld,\mu},\psi_0\}}{x}\right|^2;\ld\right)- xF\left(\left|\f{\g\psi_{\ld,\mu}}{x}\right|^2;\ld\right)dxdy\\
&-2F_1(\ld^2;\ld)\ld\int_{B_{2r_1}(X_1)} \g\max\{\psi_0-\psi_{\ld,\mu},0\}\cdot edxdy\\
&+\Ld^2\int_{B_{2r_1}(X_1)} x\chi_{\{\max\{\psi_{\ld,\mu},\psi_0\}<m_0\}}-x\chi_{\{\psi_{\ld,\mu}<m_0\}} dxdy\\
=&I_1+I_2+I_3.\ea\ee For the first term on the right hand side of \eqref{b32}, one has
\be\label{b33}\ba{rl}I_1\leq&\int_{B_{2r_1}(X_1)\setminus B_{r_1}(X_1)} \g\max\{\psi_0-\psi_{\ld,\mu},0\}\cdot f_\eta\left(\f{\g\max\{\psi_{\ld,\mu},\psi_0\}}{x};\ld\right)dxdy\\
&-\int_{B_{r_1}(X_1)} xF\left(\left|\f{\g\psi_{\ld,\mu}}{x}\right|^2;\ld\right)dxdy\\
\leq& -\int_{B_{r_1}(X_1)} xF\left(\left|\f{\g\psi_{\ld,\mu}}{x}\right|^2;\ld\right)dxdy\\
&+2\int_{\p B_{r_1}(X_1)}(m_0-\psi_{\ld,\mu}) F_1\left(\left|\f{\g\psi_0}{x}\right|^2;\ld\right)\f{\g\psi_0\cdot\nu}{x} dS.\ea\ee

It is easy to check that $I_2=0$ and
$$I_3=-\Ld^2\int_{B_{2r_1}(X_1)} x\chi_{\{\psi_{\ld,\mu}<\psi_0=m_0\}}dxdy\leq -\Ld^2\int_{B_{r_1}(X_1)} x\chi_{\{\psi_{\ld,\mu}<m_0\}}dxdy,$$
which together with \eqref{b32} and \eqref{b33} gives that
\be\label{b34}\ba{rl}&\int_{B_{r_1}(X_1)} xF\left(\left|\f{\g\psi_{\ld,\mu}}{x}\right|^2;\ld\right)dxdy+\Ld^2\int_{B_{r_1}(X_1)} x\chi_{\{\psi_{\ld,\mu}<m_0\}}dxdy\\
\leq&2\int_{\p B_{r_1}(X_1)}(m_0-\psi_{\ld,\mu}) F_1\left(\left|\f{\g\psi_0}{x}\right|^2;\ld\right)\f{\g\psi_0\cdot\nu}{x} dS.\ea\ee
Set $\t\psi(X)=\f{m_0-\psi_{\ld,\mu}(X_1+r_1X)}{r_1x_0}$ and $\t\psi_0(X)=\f{m_0-\psi_0(X_1+r_1X)}{r_1x_0}$, one has
$$\g\cdot f_\eta\left(\f{x_0\nabla\t\psi}{x_1+r_1x};\ld\right)\geq0\ \ \text{in}\  B_{16}\left(\f{X_0-X_1}{r_1}\right).$$ Moreover, it follows from the assumption \eqref{b30} that
  $$\left(\fint_{B_{16}\left(\f{X_0-X_1}{r_1}\right)}\t\psi^2 dxdy\right)^{\f{1}{2}}\leq \delta\Lambda,$$ where $\delta$ is to be chosen later on. By using the $L^\infty$ estimate in Theorem 8.17 in \cite{GT}, one has
\be\label{b35}\sup_{X\in B_{8}\left(\f{X_0-X_1}{r_1}\right)}\t\psi(X)\leq C\left(\fint_{B_{16}\left(\f{X_0-X_1}{r_1}\right)}\t\psi^2 dxdy\right)^{\f{1}{2}}\leq C\delta\Lambda,\ee where $C$ is a constant depending only on $\vartheta$ and $a$. Since $B_2(0)\subset B_{8}\left(\f{X_0-X_1}{r_1}\right)$,  it follows from \eqref{b35} that
\be\label{b36}0\leq\t\psi_0=\t\psi\leq C\Ld\delta\ \ \text{on}\ \ \p B_2(0)\ \ \text{and}\ \ \t\psi_0=0\ \ \text{on}\ \ \p B_1(0).\ee It is easy to check that
$$\g\cdot f_\eta\left(\f{x_0\nabla
\t\psi_0}{x_1+r_1x};\ld\right)=0\ \ \text{in}\ B_{2}(0)\setminus
B_{1}(0).$$ Applying the boundary elliptic estimate in Lemma 6.10 in
\cite{GT}, one has $$|\g\t\psi_0\cdot\nu|\leq C\delta\Ld \ \
\text{on}\ \ \p B_1. $$ In view of \eqref{b34}, \eqref{b36} and the
trace theorem, one has
\be\label{b37}\ba{rl}&\int_{B_{1}(0)} (x_1+r_1x)F\left(\f{x_0^2|\g\t\psi|^2}{(x_1+r_1x)^2};\ld\right)+\Ld^2 (x_1+r_1x)\chi_{\{\t\psi>0\}}dxdy\\
\leq&2x_0\int_{\p B_{1}(0)}\t\psi F_1\left(\f{x^2_0|\g\t\psi_0|^2}{|x_1+r_1x|^2};\ld\right)\f{x_0|\g\psi_0\cdot\nu|}{x_1+r_1x} dS\\
\leq&Cx_0\delta\Ld\int_{\p B_{1}(0)}\t\psi dS\\
\leq&Cx_0\delta\Ld\left(\int_{B_{1}(0)}\t\psi dxdy+\int_{B_{1}(0)}|\g\t\psi| dxdy\right)\\
\leq&Cx_0\delta\Ld\left\{\left(C\delta\Ld+\f1{4\e}\right)\int_{B_{1}(0)}\chi_{\{\t\psi>0\}} dxdy+\e\int_{B_{1}(0)}|\g\t\psi|^2 dxdy\right\},\ea\ee where we have used the fact
$$|\g\t\psi|\leq \e|\g\t\psi|^2+\f1{4\e}\ \ \text{a.e. in $B_1\cap\{\t\psi>0\}$}.$$
On the other hand, we have
\be\label{b38}\ba{rl}&x_0\int_{B_{1}(0)}|\g\t\psi|^2dxdy+x_0\int_{B_{1}(0)}\Ld^2\chi_{\{\t\psi>0\}}dxdy\\
\leq&C\int_{B_{1}(0)} (x_1+r_1x)F\left(\f{x_0^2|\g\t\psi|^2}{(x_1+r_1x)^2};\ld\right)+\Ld^2(x_1+r_1x)\chi_{\{\t\psi>0\}}dxdy\\
\leq &Cx_0\delta\Ld\left\{\left(C\delta\Ld-\f1{4\e}\right)\int_{B_{1}(0)}\chi_{\{\t\psi>0\}} dxdy+\e\int_{B_{1}(0)}|\g\t\psi|^2 dxdy\right\}.\ea\ee Taking $\e=\f{1}{C\delta\Ld}$, it follows from \eqref{b37} and \eqref{b38} that $$\ba{rl}\left(1-C\delta^2\right)x_0\Ld^2\int_{B_{1}(0)}\chi_{\{\t\psi>0\}}dxdy\leq 0,\ea$$ which implies that  $$\t\psi=0\ \ \text{in} \ \ \ B_1(0),$$ provided that $\delta<\sqrt{\f{1}{C}}$, where the constant $C$ depends on $\vartheta$ and $b$. The proof is completed.
\end{proof}

\begin{thm}\label{lb6} The minimizer $\psi_{\ld,\mu}$ is Lipschitz continuous in any compact subset of $\bar\O_\mu$ that does not contain $A$ or the points where $\p\O_\mu$ is not $C^{1,\alpha}$.

\end{thm}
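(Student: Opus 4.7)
The strategy is to cover any compact subset $K\subset\bar{\Omega}_\mu\setminus(\{A\}\cup\Sigma)$, where $\Sigma$ denotes the non-$C^{1,\alpha}$ points of $\partial\Omega_\mu$, by finitely many balls and obtain a uniform gradient bound in each ball by a case analysis. Since $K$ is compact, it has positive distance $d_0$ from $\{A\}\cup\Sigma$, and a finite covering with radii of order $d_0$ suffices.

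For each ball $B_r(X_0)$ with $X_0=(x_0,y_0)$ in the covering, I would proceed as follows. If $B_r(X_0)\subset\Omega_\mu\cap\{\psi_{\ld,\mu}<m_0\}$ stays at a positive distance from $\Gamma_{\ld,\mu}$, from the fixed boundary and from the axis $I$, the equation $\tilde Q_\ld\psi_{\ld,\mu}=0$ is uniformly elliptic with smooth coefficients (since $\tilde{\rho}$ is bounded between $\rho_{\ld,cr}$ and $\rho_{\ld,max}$ and $x$ is bounded away from $0$), and the interior $C^{1,\alpha}$ estimate for quasilinear equations from \cite{GT} yields $|\nabla\psi_{\ld,\mu}|\leq C$ on a smaller ball. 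If instead $B_r(X_0)$ lies in $\{\psi_{\ld,\mu}=m_0\}$ its gradient vanishes, and if $B_r(X_0)$ touches $\Gamma_{\ld,\mu}$, the bounded gradient Lemma \ref{lb3} immediately supplies $|\nabla\psi_{\ld,\mu}(X)|\leq C\Lambda x\leq C\Lambda\sup_{K}x$.

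When $B_r(X_0)$ is close to the axis $I$ (so $x_0$ is small) but away from $\Gamma_{\ld,\mu}$ and the other fixed walls, the equation degenerates as $x\to 0$, so I would repeat the rescaling already used in the proof of Lemma \ref{lb2}(1): set $\phi(X)=\psi_{\ld,\mu}(X_0+r_0 X)/r_0^2$ with $r_0=x_0/2$, so that $\phi$ solves a uniformly elliptic equation $\nabla\cdot f_\eta(\nabla\phi/(2+x);\ld)=0$ on $B_1(0)$ with $0\leq\phi\leq(2+x)^2 m_0$. The interior gradient estimate of \cite{GT} gives $|\nabla\phi(0)|\leq C$ independently of $x_0$, hence $|\nabla\psi_{\ld,\mu}(X_0)|=r_0|\nabla\phi(0)|\leq Cx_0$, which is uniformly bounded on $K$. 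When $B_r(X_0)$ touches a smooth ($C^{1,\alpha}$) piece of $\partial\Omega_\mu$ on which $\psi_{\ld,\mu}$ takes a smooth Dirichlet datum ($\psi_{\ld,\mu}=0$ on $N_0$ or $I$, $\psi_{\ld,\mu}=m_0$ on $N$, or the explicit data on $L_\mu$ and $T$), I would combine the boundary $C^{1,\alpha}$ estimate for quasilinear uniformly elliptic equations with the same rescaling device to absorb the degeneracy near $I$.

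The main obstacle is matching these local estimates across the interface between the non-coincidence set $\{\psi_{\ld,\mu}<m_0\}$ and the coincidence set $\{\psi_{\ld,\mu}=m_0\}$ and across the approach to the axis. The first issue is precisely what Lemma \ref{lb3} is designed for, since the bound $|\nabla\psi_{\ld,\mu}|\leq C\Lambda x$ holds uniformly in balls adjacent to $\Gamma_{\ld,\mu}$; the second is handled by the $r_0\sim x_0$ rescaling. Points where these two difficulties would meet simultaneously with the endpoint $A$ or with a corner of $\partial\Omega_\mu$ are exactly the points excluded from $K$, which is why the Lipschitz conclusion is stated only on such compact subsets. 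Patching the finitely many local bounds yields a uniform Lipschitz estimate for $\psi_{\ld,\mu}$ on $K$.
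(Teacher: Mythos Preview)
Your covering strategy and your handling of interior points, of free boundary points via Lemma~\ref{lb3}, and of the axis via the $r_0=x_0/2$ rescaling all match the paper. The gap is in your treatment of the segment $T=\{(b,y)\mid y>1\}$. It is true that $\psi_{\ld,\mu}=m_0$ on $T$, but a ``boundary $C^{1,\alpha}$ estimate'' cannot be applied directly: the equation $\tilde Q_\ld\psi_{\ld,\mu}=0$ holds only in $\{\psi_{\ld,\mu}<m_0\}$, and when $k_{\ld,\mu}(b)>1$ the free boundary $\Gamma_{\ld,\mu}$ detaches from $T$ at height $k_{\ld,\mu}(b)$ and may lie arbitrarily close to $T$ nearby. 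Any half-ball centered on $T$ that is large enough to give a useful Schauder bound will in general contain a piece of $\Gamma_{\ld,\mu}$, so $\psi_{\ld,\mu}$ is not a solution there and your proposed estimate is unavailable.

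The paper closes this gap with a barrier. For $X$ close to $T$ and $y-1>\delta$, set $X_0=(b,y)$, $r_0=\min\{b/2,\,y-1\}$, and let $\phi$ solve $\nabla\cdot f_\eta(\nabla\phi/x;\ld)=0$ in $B_{r_0}(X_0)\cap\{x>b\}$ with $\phi=0$ on $T\cap B_{r_0}$ and $\phi=m_0-\psi_{\ld,\mu}$ on $\partial B_{r_0}\cap\{x>b\}$. Because $m_0-\psi_{\ld,\mu}$ is a subsolution (the supersolution inequality in Lemma~\ref{lb2}(1)), the maximum principle gives $m_0-\psi_{\ld,\mu}\leq\phi$; and since $\phi$ is a genuine solution with zero data on the flat side and data bounded by $m_0$ on the curved side, a rescaling yields $\phi(X)\leq C(x-b)/r_0$ in $B_{r_0/2}(X_0)$. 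Feeding this linear bound into a second rescaling at scale $d_1(X)=x-b$ gives $|\nabla\psi_{\ld,\mu}(X)|\leq C/r_0$, a bound depending only on the distance to $A$, not on the location of $\Gamma_{\ld,\mu}$. A parallel difficulty occurs near $N_0\cap\{x\geq b/2\}$, where $\Gamma_{\ld,\mu}$ approaches $N_0$ as $x\to\infty$; there the paper does not use a boundary Schauder estimate either, but notes that Harnack's inequality extends up to the $C^{2,\alpha}$ Dirichlet boundary $N_0$ and then reruns the Lemma~\ref{lb3} argument to obtain $|\nabla\psi_{\ld,\mu}|/x\leq C\Lambda$.
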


\begin{proof} Denote $\psi=\psi_{\ld,\mu}$ and $\Gamma=\Gamma_{\ld,\mu}$ for simplicity. The Lipschitz continuity of $\psi$
in any compact subset of $\O_\mu$ follows from the proof of Lemma \ref{lb3}. On another hand, the Lipschitz continuity of $\psi$
near $L_\mu\cup N_\mu\cup(N_{0,\mu}\cap\left\{x\leq \f b2\right\})$ can be obtained by using the elliptic estimate. Along the similar arguments in the proof of Lemma \ref{lb2}, we can obtain the Lipschitz continuity of $\psi$ near the symmetric axis $I$.

We next consider the Lipschitz continuity of $\psi$
near $T$ or near the wall $N_0\cap\left\{x\geq \f b2\right\}$.

For $X=(x,y)\in\O_\mu$
with $y-1>\delta$, denote $X_0=(b,y)$, $d(X)=dist(X,\Gamma_{\ld,\mu})$ and
$d_1(X)=dist(X,T)$. If $d(X)\leq d_1(X)$, by using the similar
arguments in the proof Lemma \ref{lb3}, we have
$|\g\psi(X)|\leq C$, where the constant $C$ depends on $\Lambda$ and
$\vartheta$.

For the case $d(X)>d_1(X)=x-b$, set
$r_0=\min\left\{\f b2,y-1\right\}$ and $B_{r_0}=B_{r_0}(X_0)$.

%It follows from Lemma that $d_0\geq \f{m_0}{C}$. Set
%$$\mathcal{L}\phi=\g\cdot(a\g\phi),\ \ \
%a(X)=\bar{F}'_\ld(|\g\psi_{\ld,\mu}(X)|^2).$$ This is a linear
%selfadjoint uniformly elliptic operator.
Consider a function $\phi$, which solves the following boundary value problem
$$\left\{\ba{ll} &\g\cdot f_\eta\left(\f{\g\phi}{x};\ld\right)=0  \ \text{in}~~B_{r_0}\cap\{x>b\},\\
&\phi=0\ \ \ \text{on} \ \ B_{r_0}\cap\{x=b\},\ \ \phi=m_0-\psi\ \ \
\text{on} \ \ \p B_{r_0}\cap\{x>b\}. \ea\right. $$ The maximum
principle gives that \be\label{b39}\text{$m_0-\psi\leq\phi$ in $B_{r_0}\cap\{x>b\}$.}\ee Set
$\t\phi(\t X)=\f{\phi(X_0+r_0 \t X)}{r_0}$ with $\t X=(\t x,\t y)$.
Noting $0\leq\phi\leq m_0$,  one has
$$\left\{\ba{ll} \g\cdot f_\eta\left(\f{\g\t\phi}{b+r_0\t x};\ld\right)=0  \ &\text{in}~~B_{1}(0)\cap\{\t x>0\},\\
0\leq\t\phi\leq\f{m_0}{r_0} \ \ \
&\text{on} \ \ \p (B_{1}(0)\cap\{\t x>0\}). \ea\right. $$

Applying the elliptic estimates for $\t\phi$ in $B_1(0)\cap\{\t x>0\}$, one has
$$\t\phi(\t X)\leq C\f{\t x}{r_0}\ \ \text{in $B_{\f12}(0)\cap\{\t
x>0\}$},$$ which gives that \be\label{b40}\phi(X)=\t\phi\left(\f{X-X_0}{r_0}\right)\leq C\f{x-b}{r_0}\ \ \text{in
$B_{\f{r_0}2}(X_0)\cap\{x>b\}$}.\ee

If $r=d_1(x)=x-b<\f{r_0}{4}$, we have $B_r(X)\subset
B_{\f{r_0}2}(X_0)\cap\{x>b\}$. Set $\t\psi(\t X)=\f{m_0-\psi(X+r \t
X)}{r}$ with $\t X=(\t x,\t y)$, it follows from \eqref{b39} and \eqref{b40} that
$$\t\psi(\t X)\leq\f{\phi(X+r \t X)}{r}\leq\f{C(x+r\t x-b)}{rr_0}\leq\f{C}{r_0}\ \ \text{in}\ \
B_1(0).$$ By using the elliptic estimate, one has
$$|\g\psi(X)|=|\g\t\psi(0)|\leq \f{C}{r_0}.$$

If $r=d_1(x)=x-b\geq\f{r_0}{4}$, the elliptic estimate gives the desired uniform bound for $\g\psi(X)$.

Finally, we consider the Lipschitz continuity of $\psi$
near $N_0\cap\{x\geq b\}$. Since $N_0$ is $C^{2,\alpha}$ and $\psi_{\ld,\mu}=0$ on $N_0$, the Harnack's inequality is still
valid up to the boundary $N_0\cap\left\{x\geq \f b2\right\}$. It follows from the similar arguments in the proof of Lemma \ref{lb3} that
$$\f{|\g\psi(X)|}{x}\leq C\Ld\ \ \text{near $N_0\cap\left\{x\geq \f b2\right\}$}.$$

\end{proof}

\section{The free boundary of the minimizer $\psi_{\ld,\mu}$}

In this section, we will show some important properties of the free boundary, such as the continuity of the graph and the continuous fit condition.

\subsection{Uniqueness and monotonicity of the minimizer}

To obtain the continuous fit condition of the free boundary, we construct the uniqueness and the monotonicity of
the minimizer to the truncated variational problem $(P_{\ld,\mu})$.
\begin{lem}\label{lc1}For any $\ld\leq\Pi_\ld-3\t\e$ and $\mu>1$, the minimizer $\psi_{\ld,\mu}$ to the truncated variational problem $(P_{\ld,\mu})$ is unique, and
$\psi_{\ld,\mu}(x,y_1)\geq\psi_{\ld,\mu}(x,y_2)$ for any $y_1> y_2$.
\end{lem}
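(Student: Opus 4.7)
The plan is to prove both assertions by a common max-min rearrangement device applied to the non-convex functional $J_{\ld,\mu}$: uniqueness comes from rearranging two candidate minimizers, while monotonicity comes from rearranging $\psi_{\ld,\mu}$ against its upward vertical translate and then invoking the uniqueness just established.

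For uniqueness, suppose $\psi_1,\psi_2\in K_\mu$ are two minimizers, and set $\psi^\vee:=\max\{\psi_1,\psi_2\}$ and $\psi^\wedge:=\min\{\psi_1,\psi_2\}$. All constraints defining $K_\mu$ (the cap $\psi\le m_0$, the envelope $\psi\le m_0 x^2/x_\mu^2$ inside $\Omega_\mu$, and the Dirichlet data on $L_\mu$, $N$, $N_0$, $I$) are preserved by pointwise max and min, so $\psi^\vee,\psi^\wedge\in K_\mu$. The key observation is the pointwise identity
\begin{equation*}
G(\nabla\psi^\vee,\psi^\vee,x;\ld)+G(\nabla\psi^\wedge,\psi^\wedge,x;\ld)=G(\nabla\psi_1,\psi_1,x;\ld)+G(\nabla\psi_2,\psi_2,x;\ld)\ \text{a.e.,}
\end{equation*}
which I would verify by a case analysis on whether $\psi_i=m_0$ or $\psi_i<m_0$ at the given point: the pairs $\{\nabla\psi^\vee,\nabla\psi^\wedge\}$ and $\{\nabla\psi_1,\nabla\psi_2\}$ agree as unordered pairs a.e., the indicator $\chi_{\{\psi<m_0\}\cap E}$ splits by the same pairing, and the linear piece $\nabla\psi\cdot e\,\chi_{\{\psi<m_0\}\cap E}$ splits as well because its contribution at any point where some $\psi_i=m_0$ is annihilated by the indicator. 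Integrating this gives $J_{\ld,\mu}(\psi^\vee)+J_{\ld,\mu}(\psi^\wedge)=J_{\ld,\mu}(\psi_1)+J_{\ld,\mu}(\psi_2)$, and the minimality of $\psi_1,\psi_2$ forces $\psi^\vee$ and $\psi^\wedge$ to be minimizers themselves. In $U:=\Omega_\mu\cap\{\psi^\wedge<m_0\}$ both solve $\t Q_\ld\psi=0$ (Lemma \ref{lb2}), and moreover $\psi^\vee<m_0$ throughout $U$, since otherwise $\psi^\vee$ would be identically $m_0$ on an open subset of $U$, contradicting the analyticity of elliptic solutions together with the free-boundary regularity of Lemma \ref{lb2}(2). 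Hence the difference $w:=\psi^\vee-\psi^\wedge\ge 0$ satisfies a linear uniformly elliptic equation in $U$, obtained by integrating $f_{\eta\eta}$ along the segment joining the two gradients, and $w=0$ on $\partial U$ (either both functions equal $m_0$ at the point, or it lies on $\partial\Omega_\mu$ where the shared Dirichlet data give $w=0$). The strong maximum principle then yields $w\equiv 0$ in $U$, hence $\psi_1\equiv\psi_2$.

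For monotonicity, fix $h>0$ and define the upward translate $\t\psi^h(x,y):=\psi_{\ld,\mu}(x,y-h)$, which lives on $\Omega_\mu^+:=\Omega_\mu+(0,h)$. Since $G$ is independent of $y$, $\t\psi^h$ is the unique minimizer of the analogous functional $J_{\ld,\mu}^+$ on the upward translated admissible class $K_\mu^+$. On the overlap $D_h:=\Omega_\mu\cap\Omega_\mu^+$, form $M:=\max\{\psi_{\ld,\mu},\t\psi^h\}$ and $m:=\min\{\psi_{\ld,\mu},\t\psi^h\}$, extend $M$ by $\psi_{\ld,\mu}$ on the bottom strip $\Omega_\mu\setminus D_h=\{g_0(x)<y<g_0(x)+h\}$, and extend $m$ by $\t\psi^h$ on the top strip $\Omega_\mu^+\setminus D_h=\{\mu<y<\mu+h\}$. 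The crucial admissibility checks go through precisely because of the choice of \emph{upward} shift: on $N_0$ the bottom-strip extension of $M$ inherits $M=0$ from $\psi_{\ld,\mu}$; on $L_\mu$, $M=\max\{m_0 x^2/x_\mu^2,\t\psi^h(\cdot,\mu)\}=m_0 x^2/x_\mu^2$ since $\t\psi^h(x,\mu)=\psi_{\ld,\mu}(x,\mu-h)\le m_0 x^2/x_\mu^2$; the decreasing monotonicity of $g$ near the orifice makes both functions equal $m_0$ on $N$; and axisymmetry makes both vanish on $I$. Symmetric checks apply to $m$ on the translated boundaries. The same pointwise identity, integrated over $\Omega_\mu$ and $\Omega_\mu^+$, gives
\begin{equation*}
J_{\ld,\mu}(M)+J_{\ld,\mu}^+(m)=J_{\ld,\mu}(\psi_{\ld,\mu})+J_{\ld,\mu}^+(\t\psi^h),
\end{equation*}
and the minimality of $\psi_{\ld,\mu}$ and $\t\psi^h$ on their respective classes forces both $M$ and $m$ to be minimizers. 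The uniqueness proved above (applied to both $(P_{\ld,\mu})$ and its upward translate) yields $M\equiv\psi_{\ld,\mu}$ on $\Omega_\mu$ and $m\equiv\t\psi^h$ on $\Omega_\mu^+$. Restricting to $D_h$, this means $\t\psi^h\le\psi_{\ld,\mu}$ there, i.e.\ $\psi_{\ld,\mu}(x,y_2)\le\psi_{\ld,\mu}(x,y_1)$ whenever $y_1=y_2+h>y_2$, which is the asserted monotonicity.

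The main obstacle is the admissibility verification in the monotonicity step: the direction of the shift is forced, because a downward translate $\psi_{\ld,\mu}(x,y+h)$ need not vanish on the original wall $N_0$ (and hence would violate the constraint $M=0$ on $N_0$ after taking the maximum), whereas the upward choice $\t\psi^h(x,y)=\psi_{\ld,\mu}(x,y-h)$ makes the bottom-strip extension of $M$ identically $\psi_{\ld,\mu}$ and hence respects all the prescribed Dirichlet data without modification. A secondary delicate point is the free-boundary analysis in the uniqueness step, where one must use Lemma \ref{lb2}(2) to rule out $\psi^\vee\equiv m_0$ on pieces of $U$, so that the max-min difference $w$ genuinely satisfies a linear uniformly elliptic equation on which the strong maximum principle can be applied.
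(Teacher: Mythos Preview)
The main gap is in your uniqueness argument: the claim that $\psi^\vee<m_0$ throughout $U=\{\psi^\wedge<m_0\}$ is not established by the reasoning you give. If the free boundaries of $\psi_1$ and $\psi_2$ differ, there is an open set where, say, $\psi_1=m_0$ while $\psi_2<m_0$; on that set $\psi^\vee=m_0$ and $\psi^\wedge=\psi_2<m_0$, so the set lies inside $U$ yet $\psi^\vee$ equals $m_0$ there. Nothing about analyticity of the free boundary or of interior elliptic solutions rules this out --- the free boundary of the minimizer $\psi^\vee$ is perfectly allowed to be an analytic curve sitting strictly inside $U$. Once this happens, $\psi^\vee$ fails to solve $\t Q_\ld\psi=0$ across its own free boundary, the difference $w$ no longer satisfies a linear uniformly elliptic equation on all of $U$, and your maximum-principle conclusion breaks down. (Trying to use instead that $\psi^\vee$ is a global supersolution only yields $w\ge 0$, which you already know.) Since your monotonicity step invokes the uniqueness just ``proved'', it inherits the gap.

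The paper resolves this by building the $\varepsilon$-shift into the comparison from the start rather than separating the two assertions. Comparing $\psi_1^\varepsilon(x,y)=\psi_1(x,y-\varepsilon)$ with $\psi_2$ produces the crucial \emph{strict} inequality $\psi_1^\varepsilon<m_0=\psi_2$ on $N_\mu$, hence $\psi_1^\varepsilon<\psi_2$ in an $\O_\mu$-neighborhood of $N_\mu$. The additive identity (carefully justified on the unbounded $\O_\mu$ via truncation to $\O_{\mu,R}$ and $R\to\infty$, a point your write-up also glosses over) shows $\min\{\psi_1^\varepsilon,\psi_2\}$ is a minimizer; if the strict inequality failed somewhere in the connected fluid component $D$, Hopf's lemma at a first touching point would force a gradient discontinuity in this minimizer, contradicting interior elliptic regularity. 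Hence $\psi_1^\varepsilon<\psi_2$ throughout $D$. Setting $\psi_1=\psi_2$ yields monotonicity in $y$; after showing $D$ is the whole fluid region, sending $\varepsilon\to 0$ gives $\psi_1\le\psi_2$ and, by symmetry, uniqueness. The $\varepsilon$-shift is precisely what supplies the strict starting inequality your direct max--min approach lacks.
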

\begin{proof} Suppose that $\psi_1$ and $\psi_2$ are two minimizers to the truncated variational problem $(P_{\ld,\mu})$. Set
$$\psi_1^{\varepsilon}(x,y)=\psi_1(x,y-\varepsilon)\ \ \text{for any}\ \ \e>0.$$
Notice that $\psi_1^{\varepsilon}(x,y)$ is a minimizer of the
functional $J^\e_{\ld,\mu}$ in $\O_\mu^\e$ with the corresponding
admissible set $K_{\mu}^\e$ as follows
$$\O_\mu^\e=\{(x,y)|~(x,y-\e)\in\O_\mu\}~~\text{and}~~K_{\mu}^\e=\{\psi^\e(x,y-\e)\in K_{\mu}|~(x,y)\in\O_\mu^\e\}.$$

Extend $\psi_2(x,y)=\f{m_0}{x_\mu^2}x^2$ in $\{(x,y)\mid 0<x\leq
x_\mu,\mu<y\leq\mu+\varepsilon\}$ and denote
$$\varphi_1=\min\{\psi_1^{\varepsilon},\psi_2\}\ \ \ \text{and}\ \ \ \varphi_2=\max\{\psi_1^{\varepsilon},\psi_2\}.$$

Obviously, $\varphi_1\in K^\e_{\mu}$ and
$\varphi_2\in K_{\mu}$. For any sufficiently large $R>R_0$, denote
$\O_{\mu,R}=\O_\mu\cap\{y<R\}$ and
$\O_{\mu,R}^\e=\O_\mu^\e\cap\{y<R\}$. Since $\varphi_1=\psi_1^\e$ in $\O_{\mu,R}\setminus\O_{\mu,R}^\e$ and $\varphi_1=\psi_1^\e$ in $\O_{\mu,R}^\e\setminus\O_{\mu,R}$, it is easy to check that
\be\label{c1}\ba{rl}&\int_{\O_{\mu,R}^{\e}}xF\left(\f{|\nabla\varphi_{1}|^2}{x^2};\ld\right)dxdy+\int_{\O_{\mu,R}}xF\left(\f{|\nabla\varphi_2|^2}{x^2};\ld\right)dxdy\\
%=&\int_{\O_{\mu,R}^{\e}\cap\{\psi_1^\e\leq %\psi_2\}}xF\left(\f{|\nabla\psi_{1}^\e|^2}{x^2}\right)dxdy+\int_{\O_{\mu,R}^\e\cap\{\psi_1^\e>\psi_2\}}xF\left(\f{|\nabla\psi_2|^2}{x^2}\right)dxdy\\
%&+\int_{\O_{\mu,R}\cap\{\psi_1^\e\leq %\psi_2\}}xF\left(\f{|\nabla\psi_2|^2}{x^2}\right)dxdy+\int_{\O_{\mu,R}\cap\{\psi_1^\e>\psi_2\}}xF\left(\f{|\nabla\psi_1^\e|^2}{x^2}\right)dxdy\\
%=&\int_{\O_{\mu,R}^{\e}\cap\{\psi_1^\e\leq %\psi_2\}}xF\left(\f{|\nabla\psi_{1}^\e|^2}{x^2}\right)dxdy+\int_{\O_{\mu,R}\cap\{\psi_1^\e>\psi_2\}}xF\left(\f{|\nabla\psi_2|^2}{x^2}\right)dxdy\\
%&+\int_{\O_{\mu,R}\cap\{\psi_1^\e\leq %\psi_2\}}xF\left(\f{|\nabla\psi_2|^2}{x^2}\right)dxdy+\int_{\O_{\mu,R}^\e\cap\{\psi_1^\e>\psi_2\}}xF\left(\f{|\nabla\psi_1^\e|^2}{x^2}\right)dxdy\\
=&\int_{\O_{\mu,R}^{\e}}xF\left(\f{|\nabla\psi_{1}^\e|^2}{x^2};\ld\right)dxdy+\int_{\O_{\mu,R}}xF\left(\f{|\nabla\psi_2|^2}{x^2};\ld\right)dxdy,\ea\ee and \be\label{c2}\ba{rl}&\int_{E_{R}^{\e}}x\chi_{\{\varphi_1<m_0\}}dxdy+\int_{E_{R}}x\chi_{\{\varphi_2<m_0\}}dxdy\\
=&\int_{E_{R}^{\e}}x\chi_{\{\psi_1^{\varepsilon}<m_0\}}dxdy+\int_{E_{R}}x\chi_{\{\psi_2<m_0\}}dxdy,\ea\ee where $E_R=\O_{\mu,R}\cap\{x>b\}$ and $E_R^\e=\O_{\mu,R}^\e\cap\{x>b\}$.

Integration by parts, one has
\be\label{c3}\ba{rl}&\int_{E_{R}^{\e}}\nabla\varphi_1\cdot e \chi_{\{\varphi_1<m_0\}}dxdy+\int_{E_R}\nabla\varphi_2\cdot e
\chi_{\{\varphi_2<m_0\}} dxdy\\
&-\int_{E_{R}^{\e}}\nabla\psi_1^\e\cdot e \chi_{\{\varphi_1<m_0\}}dxdy+\int_{E_R}\nabla\psi_2\cdot e
\chi_{\{\psi_2<m_0\}} dxdy\\
=&\int_{\p E_{R}^{\e}}(\varphi_1-\psi_1^\e)e\cdot \nu dS+\int_{E_R}(\varphi_2-\psi_2)e\cdot\nu dS\\
=&\int_{\p E_{R}^{\e}\cap \p E_R}(\varphi_1+\varphi_2-\psi_1^\e-\psi_2)e\cdot \nu dS\\
=&0,\ea\ee where we have used the facts $\varphi_2=\psi_2=0$ in $E_R\setminus E_R^\e$.

In view of \eqref{c1} - \eqref{c3}, one has
\be\label{c4}\ba{rl}&\int_{\O_{\mu,R}^{\e}}G(\g\varphi_1,\varphi_1,x;\ld)dxdy+\int_{\O_{\mu,R}}G(\g\varphi_1,\varphi_1,x;\ld)dxdy\\
=&\int_{\O_{\mu,R}^{\e}}G(\g\psi^\e_1,\psi^\e_1,x;\ld)dxdy+\int_{\O_{\mu,R}}G(\g\psi_2,\psi_2,x;\ld)dxdy.\ea\ee
Taking $R\rightarrow+\infty$ in \eqref{c4} yields that
\be\label{c5}\ba{rl}J_{\lambda,\mu}^\e(\psi_1^{\varepsilon})+J_{\lambda,\mu}(\varphi_2)=J_{\lambda,\mu}^\e(\varphi_1)+J_{\lambda,\mu}(\psi_2).\ea\ee
Since $\psi_1^\e$ and $\psi_2$ are minimizers, it follows from \eqref{c5} that
\be\label{c6}J_{\lambda,\mu}^\e(\psi_1^{\varepsilon})=J_{\lambda,\mu}^\e(\varphi_1)\ \ \text{and}\ \ J_{\lambda,\mu}(\psi_2)=J_{\lambda,\mu}(\varphi_2).\ee

Next, we claim that \be\label{c7}\psi_1^\e(x,y)<\psi_2(x,y)\ \
\text{in $D$},\ee where $D$ is the maximal connected component of
$\O_\mu\cap\{\psi_2<m_0\}$, which contains an $\O_\mu$-neighborhood of
$N_{\mu}$.

Suppose not, note that $\psi_1^\e<m_0=\psi_2$ on $N_{\mu}$, then there
exists a dist $B_1$, such that
$$\psi_1^\e<\psi_2\ \ \text{in}\ \ B_1,\ \ \bar
B_1\subset\O_\mu\cap\{\psi_2<m_0\},$$ and $$\psi_1^\e=\psi_2\ \
\text{at some points $X_0\in \p
B_1\cap(\O_\mu\cap\{\psi_2<m_0\})$}.$$ Thanks to Hopf's lemma, one
has
$$\f{\p(\psi_1^\e-\psi_2)}{\p\nu}>0\ \ \text{at $X_0$},$$ where $\nu$
is the outer normal vector of $\p B_1$ at $X_0$. This implies that
the level set $\{\psi_1^\e=\psi_2=\psi_2(X_0)\}$ is smooth curve in a
neighborhood of $X_0$. Then there exists a smooth curve
$\Gamma_0=\{X\mid \psi_1^\e(X)=\psi_2(X)=\psi_2(X_0)\}$ passing
through $X_0$ and a disc $B_2$, such that $$\psi_1^\e>\psi_2\ \
\text{in $B_2$ and $X_0\in \Gamma_0\cap \p B_2\cap\p B_1$}.$$ Hence,
one has
$$\f{\p(\varphi_1-\psi_2)(X)}{\p\nu}=\f{\p(\psi_1^\e-\psi_2)(X)}{\p\nu}\rightarrow\f{\p(\psi_1^\e-\psi_2)(X_0)}{\p\nu}>0\ \ \text{as $X\rightarrow X_0$, $X\in
B_1$},$$ and
$$\f{\p(\varphi_1-\psi_2)(X)}{\p\nu}=\f{\p(\psi_2-\psi_2)(X)}{\p\nu}=0,\
\ \text{$X\in B_2$},$$ which implies that $\varphi_1$ is not $C^1$-smooth
in a neighborhood of $X_0$, due to that $\psi_2$ is smooth at $X_0$.
On the other hand, it follows from \eqref{c6} that $\varphi_1$ is a
minimizer, and $\varphi_1(X_0)<m_0$. By virtue of the elliptic
regularity, we can conclude that $\varphi_1$ is smooth in a
neighborhood of $X_0$. This leads a contradiction. Hence, we
complete the proof of the claim \eqref{c7}.

We next show that \be\label{c8}\text{$\psi_1(x,y)$ is monotone
increasing with respect to $y$ in $\O_\mu$.}\ee Choosing
$\psi_1=\psi_2$ in \eqref{c7}, it implies that
\be\label{c9}\text{$\f{\p\psi_1}{\p y}\geq 0$ in $D$.}\ee

To obtain \eqref{c8}, it suffices to show that
$$\text{$D=\O_\mu\cap\{\psi_1<m_0\}$}.$$

Suppose not, it follows from \eqref{c7} that $D\cap\{x>b\}=\{x>b, y<\phi(x)\}$. As a part of the free
boundary of the graph $\phi$, we
can conclude that $\phi(x)$ is continuous (see the proof of Lemma \ref{lc3} later).  Define $\psi_0=\psi_1$ in
$D$ and $\psi_0=m_0$ in $\O_\mu\cap\{x>b,y\geq\phi(x)\}$, it is easy to check that
$\psi\in K_\mu$. Therefore, we have

$$J_{\ld,\mu}(\psi_0)-J_{\ld,\mu}(\psi_1)=- \int_{\O_\mu\setminus D}
G(\g\psi_1,\psi_1,x;\ld)dxdy<0,$$ which leads a contradiction.

Similarly, we can obtain that
$$\text{$\psi_2(x,y)$ is monotone increasing with respect to $y$ in
$\O_\mu$,}$$ which together with \eqref{c8} gives that
$$\text{$\O_\mu\cap\{\psi_1<m_0\}$ and $\O_\mu\cap\{\psi_2<m_0\}$ are connected}.$$
 In view of \eqref{c7}, one has
$$\psi_1^\e(x,y)\leq\psi_2(x,y)~~\text{in}~~~ \O_\mu.$$ Taking
$\e\rightarrow0$ in above inequality, we have
$$\psi_1\leq\psi_2\ \ \text{in}\ \ \O_\mu.$$

Similarly, we can obtain that $$\psi_1\geq\psi_2\ \ \text{in}\ \
\O_\mu.$$

Hence, $\psi_1=\psi_2$ and the minimizer to the variational problem
$(P_{\ld,\mu})$ is unique.

\end{proof}

\subsection{Fundamental properties of the free boundary}
In this subsection, we show some significant properties of the
free boundary $\G_{\ld,\mu}$ to the truncated variational problem
$(P_{\ld,\mu})$. Thanks to the monotonicity of the minimizer
$\psi_{\ld,\mu}(x,y)$ with respect to $y$, there exists a mapping
$y=k_{\ld,\mu}(x)$, such that
$$E\cap\{\psi_{\ld,\mu}<m_0\}=\{(x,y)\mid b<x<\infty, g_0(x)<y<k_{\ld,\mu}(x)\}.$$

To obtain the continuity of the function $k_{\ld,\mu}(x)$, we need the following non-oscillation lemma and the proof can be found in Lemma 4.4 in \cite{ACF5}.
\begin{lem}\label{lc2}Let G be a domain in $E\cap\{\psi_{\ld,\mu}<m_0\}$, bounded by two disjointed arcs $\gamma_1$, $\gamma_2$ of the free boundary, ${y=\beta_1}$, ${y=\beta_2}$.
Suppose that the arcs $\gamma_i$ ($i=1,2$) lie in
$\{\beta_1<y<\beta_2\}$ with the endpoints $(\a_i,\beta_1)$ and
$(\zeta_i,\beta_2)$. Suppose the distant $d=dist(G,B)>0$, then
$$|\beta_2-\beta_1|\leq C\max\{|\a_1-\a_2|,~~|\zeta_1-\zeta_2|\},$$
where $C$ is a constant depending only on $\Ld, \vartheta$, $d, N, N_0$ and $m_0$.
\end{lem}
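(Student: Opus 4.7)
The plan is to argue by contradiction along the Alt--Caffarelli--Friedman non-oscillation strategy used in \cite{ACF5}. Assume $|\beta_2-\beta_1|$ is much larger than $M:=\max\{|\alpha_1-\alpha_2|,|\zeta_1-\zeta_2|\}$. The idea is to build a variational competitor $\tilde\psi\in K_\mu$ that fills most of the long, thin region $G$ with the constant value $m_0$, and then use the minimality of $\psi_{\ld,\mu}$ to derive the desired inequality.

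Concretely, I would choose two thin horizontal strips of width $h$ (to be optimized) adjacent to $\{y=\beta_1\}$ and $\{y=\beta_2\}$, set $\tilde\psi=m_0$ on the central portion of $G$, linearly interpolate between $\psi_{\ld,\mu}|_{\{y=\beta_i\}}$ and $m_0$ in the two strips, and take $\tilde\psi=\psi_{\ld,\mu}$ off of $G$. Since $\psi_{\ld,\mu}=m_0$ on $\gamma_1\cup\gamma_2$, the function $\tilde\psi$ is continuous across $\partial G$ and belongs to $K_\mu$. By the minimality inequality $J_{\ld,\mu}(\tilde\psi)\geq J_{\ld,\mu}(\psi_{\ld,\mu})$ and the decomposition \eqref{b14} of the integrand, the difference $J_{\ld,\mu}(\tilde\psi)-J_{\ld,\mu}(\psi_{\ld,\mu})$ splits into four pieces: (a) a principal negative contribution $-\Lambda^2\int_{G'} x\,dxdy$ from the indicator $\chi_{\{\psi<m_0\}}$, of order $-c\Lambda^2 d\cdot |\beta_2-\beta_1|\cdot \mathrm{width}(G)$ using $\mathrm{dist}(G,I)\geq d$; (b) the removal of $\int_G F(|\nabla\psi_{\ld,\mu}|^2/x^2)x\,dxdy\geq 0$ inside $G'$; (c) a positive Dirichlet contribution from the two interpolation strips, bounded by $C\Lambda^2 M$ via Lemma \ref{lb3} (which gives $|\nabla\psi_{\ld,\mu}|\leq C\Lambda x$ and hence $|\nabla\tilde\psi|\lesssim \Lambda x/h$ in the strips, whose horizontal extent is $\lesssim M$); (d) a boundary-type term from the linear $\nabla\psi\cdot e\,\chi_{\{\psi<m_0\}}$ factor, also dominated by $C\Lambda M$ after integration by parts on $\partial G'$.

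Combining (a)--(d) and rearranging gives $c\Lambda^2 d\cdot |\beta_2-\beta_1|\cdot \mathrm{width}(G) \leq C\Lambda^2 M$, hence $|\beta_2-\beta_1|\leq CM/\mathrm{width}(G)$. To convert this into the stated inequality one still needs a positive lower bound on $\mathrm{width}(G)$ in terms of the data, and this is furnished by the non-degeneracy Lemma \ref{lb5}: if $\mathrm{width}(G)$ were too small, then the $L^2$ averages of $m_0-\psi_{\ld,\mu}$ on small discs straddling $G$ would satisfy the smallness hypothesis \eqref{b30}, forcing $\psi_{\ld,\mu}\equiv m_0$ across $G$ and contradicting $G\subset\{\psi_{\ld,\mu}<m_0\}$. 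Explicit tracking through this step produces a lower bound $\mathrm{width}(G)\geq c(\Lambda,\vartheta,d,N,N_0,m_0)$, yielding the claimed estimate.

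The main obstacle is twofold. First, the interpolation strip width $h$ has to be selected so that the Dirichlet loss in (c) does not overwhelm the volume gain in (a); this is a direct calculation once the Lipschitz bound of Lemma \ref{lb3} is in hand. Second, and more delicate, is the quantitative use of non-degeneracy to bound $\mathrm{width}(G)$ from below: one must apply Lemma \ref{lb5} on a family of discs of radius comparable to $\mathrm{width}(G)$ lying inside $G$, and use the axisymmetric factor $x\geq d$ to make the estimate uniform. The dependence of the final constant on $\Lambda,\vartheta,d,N,N_0,m_0$ enters through the bounded gradient and non-degeneracy constants, exactly matching the statement.
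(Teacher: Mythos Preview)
The paper defers to Lemma~4.4 of \cite{ACF5}, whose proof is not variational. One integrates the equation $\tilde Q_\lambda\psi_{\lambda,\mu}=0$ over $G$ and applies the divergence theorem:
\[
0=\int_{\partial G}\frac{1}{x\,\tilde\rho}\,\frac{\partial\psi_{\lambda,\mu}}{\partial\nu}\,dS.
\]
On the arcs $\gamma_1,\gamma_2$ the free-boundary condition $\frac1x\partial_\nu\psi_{\lambda,\mu}=\lambda$ (with $\tilde\rho=\rho_0$ there) yields a contribution $\frac{\lambda}{\rho_0}(|\gamma_1|+|\gamma_2|)\ge\frac{2\lambda}{\rho_0}|\beta_2-\beta_1|$, while on the two horizontal segments the Lipschitz bound of Lemma~\ref{lb3}/Theorem~\ref{lb6} gives a contribution of size at most $C\Lambda(|\alpha_1-\alpha_2|+|\zeta_1-\zeta_2|)$. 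Since $\lambda$ and $\Lambda$ are comparable by \eqref{b7}--\eqref{b13}, the inequality follows in one line, with no area or width information about $G$ needed.

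Your competitor approach is a genuinely different route, and the final step does not work as written. Lemma~\ref{lb5} does not deliver a \emph{fixed} positive lower bound on $\mathrm{width}(G)$. To keep the disc $B_r(X_0)$ inside the strip $\{\beta_1<y<\beta_2\}$ (so that $B_r\cap\{\psi_{\lambda,\mu}<m_0\}=B_r\cap G$, which is what makes the average in \eqref{b30} small when $G$ is thin), one must take $r\le\min(y_0-\beta_1,\beta_2-y_0)$, and the resulting bound is only $w(y_0)\ge c\min(y_0-\beta_1,\beta_2-y_0)$, which degenerates at the horizontal edges. If instead the disc is allowed to leave the strip, it meets the continuation of $\{\psi_{\lambda,\mu}<m_0\}$ above $y=\beta_2$ or below $y=\beta_1$, and the smallness hypothesis \eqref{b30} need not hold however thin $G$ is there. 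There are further loose ends in your sketch: a $y$-linear interpolation has to be checked to match $m_0$ along the portions of $\gamma_i$ lying inside the strips; the claim ``horizontal extent $\lesssim M$'' for the strips ignores that $G$ may widen sharply above $y=\beta_1$; and $|\nabla\tilde\psi|\lesssim\Lambda x/h$ is not what the Lipschitz estimate gives (one has $|\partial_y\tilde\psi|\lesssim(m_0-\psi)/h$, which needs the sharper bound $m_0-\psi\le C\Lambda x\,M$ from \eqref{b21} on the bottom segment). With enough care the competitor argument can perhaps be salvaged using the degenerate width bound, but the divergence-theorem proof is both shorter and sharper, which is why it is the standard one.
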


\begin{xrem}\label{re5}
The nonoscillation Lemma \ref{lc2} remains true provided that one of the
arcs $\gamma_2$ is a line segment on $T=\{(b,y)\mid y\geq 1\}$, and
$$\f{1}{x}\f{\p\psi_{\ld,\mu}}{\p\nu}\geq\l~~\text{on}~~\gamma_2.$$
\end{xrem}

\begin{lem}\label{lc3} The function $y=k_{\ld,\mu}(x)$ is continuous for $x\in(b,+\infty)$. Moreover, $k_{\ld,\mu}(b)=\lim_{x\rightarrow b^+}k_{\ld,\mu}(x)$ exists and is finite.
\end{lem}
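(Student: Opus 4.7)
The plan is to derive both assertions from the non-oscillation estimate in Lemma \ref{lc2} and its boundary version in Remark \ref{re5}. First, for interior continuity I would argue by contradiction. Since $\psi_{\ld,\mu}$ is monotone in $y$ by Lemma \ref{lc1}, the free boundary is the graph $y=k_{\ld,\mu}(x)$, so a discontinuity at some $x_0\in(b,+\infty)$ translates into $\beta_1:=\liminf_{x\to x_0}k_{\ld,\mu}(x)<\beta_2:=\limsup_{x\to x_0}k_{\ld,\mu}(x)$. For a small $\eta>0$, I would extract two disjoint connected arcs $\gamma_1,\gamma_2$ of $\G_{\ld,\mu}$ lying in the strip $\{\beta_1+\eta/2<y<\beta_2-\eta/2\}$, each joining the two horizontal boundaries of the strip, whose endpoint $x$-coordinates all converge to $x_0$. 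Working in a small compact neighborhood of $(x_0,(\beta_1+\beta_2)/2)$ that stays away from $N$, $N_0$, $T$ and $A$, so that $d=\mathrm{dist}(G,B)>0$ in the notation of Lemma \ref{lc2} is satisfied, the non-oscillation estimate forces $\beta_2-\beta_1-\eta\leq C\max\{|\alpha_1-\alpha_2|,|\zeta_1-\zeta_2|\}\to 0$, a contradiction.

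For the one-sided limit $k_{\ld,\mu}(b)$, I would first bound $\limsup_{x\to b^+}k_{\ld,\mu}(x)$ from above. The segment $l\subset T\cap\p\{\psi_{\ld,\mu}<m_0\}$ produced in Lemma \ref{lb2} must be bounded in $y$, since otherwise the measure-theoretic term $\Ld^2\int_{\O_\mu}x\,\chi_{\{\psi_{\ld,\mu}<m_0\}}\,dxdy$ appearing in $J_{\ld,\mu}(\psi_{\ld,\mu})$ would be infinite; combined with the graphical structure this yields a uniform upper bound on $k_{\ld,\mu}$ near $x=b$. Supposing then $\beta_1:=\liminf_{x\to b^+}k_{\ld,\mu}(x)<\beta_2:=\limsup_{x\to b^+}k_{\ld,\mu}(x)$, I would take $\gamma_1$ to be an arc of $\G_{\ld,\mu}$ joining the levels $y=\beta_1+\eta/2$ and $y=\beta_2-\eta/2$ with endpoint $x$-coordinates tending to $b$, and let $\gamma_2$ be the corresponding segment of $T$, on which $\f{1}{x}\f{\p\psi_{\ld,\mu}}{\p\nu}\geq\ld$ by Lemma \ref{lb2}. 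Remark \ref{re5} then supplies the same non-oscillation bound and delivers an identical contradiction, so the limit exists and is finite.

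The main obstacle will be the rigorous extraction of the two disjoint arcs required to invoke Lemma \ref{lc2} and its boundary analogue: a priori the free boundary could oscillate in a complicated fashion near the putative discontinuity, and one must cleanly separate two connected pieces spanning the strip whose endpoint $x$-coordinates converge to the problematic abscissa. This is resolved by combining the graph property of $\G_{\ld,\mu}$ (from Lemma \ref{lc1}), which rules out vertical segments of the free boundary, with the local analyticity of $\G_{\ld,\mu}$ proved in Lemma \ref{lb2}, which prevents accumulation of infinitely many oscillations inside any compact subregion away from $A$. A subsidiary but routine point is the verification of the positivity of $d=\mathrm{dist}(G,B)$, which is handled by restricting the whole construction to balls that stay away from $A=(b,1)$, the symmetry axis $I$ and the ground $N_0$, so that the constant in Lemma \ref{lc2} remains fixed along the limit.
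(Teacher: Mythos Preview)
Your non-oscillation strategy is the right tool for showing that the \emph{one-sided} limits $\lim_{x\to x_0^\pm}k_{\ld,\mu}(x)$ exist (and this is exactly what the paper does in its Step~1), but it does not by itself rule out a pure jump discontinuity where both one-sided limits exist but differ. In that scenario, say $k_{\ld,\mu}(x_0^-)=\beta_1<\beta_2=k_{\ld,\mu}(x_0^+)$, the set $E\cap\p\{\psi_{\ld,\mu}<m_0\}$ in the strip $\{\beta_1+\eta<y<\beta_2-\eta\}$ near $x_0$ consists of a \emph{single} vertical segment $\{x_0\}\times(\beta_1+\eta,\beta_2-\eta)$; the graph $y=k_{\ld,\mu}(x)$ itself stays outside the strip on both sides. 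You therefore cannot extract two disjoint arcs joining the horizontal boundaries with $x$-coordinates tending to $x_0$, and Lemma~\ref{lc2} gives nothing. Your remedy---that the graph property ``rules out vertical segments of the free boundary''---is not correct: monotonicity of $\psi_{\ld,\mu}$ in $y$ only gives $\{\psi_{\ld,\mu}<m_0\}=\{y<k_{\ld,\mu}(x)\}$, and the topological boundary of such a subgraph does contain vertical pieces precisely at jump points. The paper handles this case (its Step~2) by a genuinely different mechanism: on the vertical segment one has $\psi_{\ld,\mu}=m_0$ and $\tfrac1x\partial_x\psi_{\ld,\mu}=-\ld$, and since the equation and the data are analytic, Cauchy--Kovalevskaya forces $\psi_{\ld,\mu}=-\ld(x^2-x_0^2)+m_0$ in a full vertical slab, contradicting $\psi_{\ld,\mu}=0$ on $N_0$.

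Your finiteness argument is also flawed. The integrand of $J_{\ld,\mu}$ is $G(\nabla\psi,\psi,x;\ld)$, which by \eqref{b15} is bounded below by $xF_1(\ld^2;\ld)\,|\tfrac{\nabla\psi}{x}-\ld e|^2\chi_{\{\psi<m_0\}\cap E}$, not by a positive multiple of $x\chi_{\{\psi<m_0\}\cap E}$ alone; hence $J_{\ld,\mu}(\psi_{\ld,\mu})<\infty$ does \emph{not} force $\int x\chi_{\{\psi_{\ld,\mu}<m_0\}}<\infty$, and in particular does not preclude $k_{\ld,\mu}$ from being unbounded near $x=b$. You also do not address finiteness of $k_{\ld,\mu}(x)$ for general $x>b$. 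The paper establishes all of this in its Step~3 by first invoking the non-degeneracy Lemma~\ref{lb5} to show $\Gamma_{\ld,\mu}\neq\varnothing$, and then applying Lemma~\ref{lc2} (and Remark~\ref{re5}) on tall thin rectangles $D_R$ to exclude any interval on which $k_{\ld,\mu}=+\infty$.
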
\begin{figure}[!h]
\includegraphics[width=100mm]{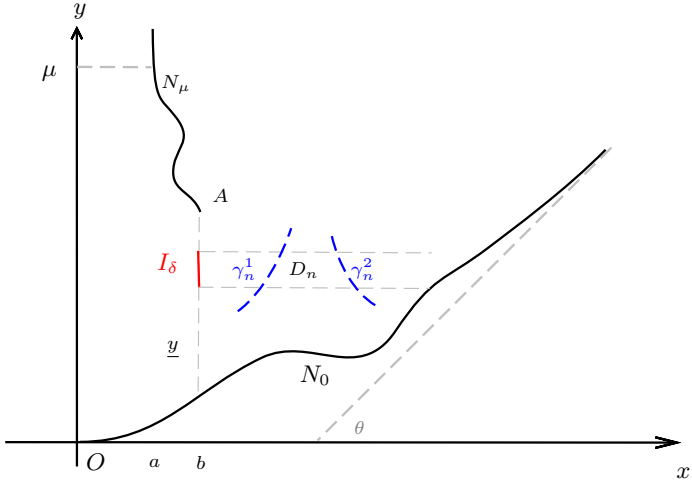}
\caption{The domain $D_n$}\label{fi6}
\end{figure}
\begin{proof}
We first consider the existence of the limit $\lim_{x\rightarrow b^+}k_{\ld,\mu}(x)$.

 Suppose not, one has that $\liminf_{x\rightarrow b^+}k_{\ld,\mu}(x)<\limsup_{x\rightarrow b^+}k_{\ld,\mu}(x)$, then we consider the following two cases for $\underline y=\lim_{x\rightarrow b^+}k_{\ld,\mu}(x)$.

{\bf Case 1.} $\underline y<1$. Denote $\delta=\f{1-\underline y}{4}$ and $I_\delta=\left\{(1,y)\mid 1-\f52\delta\leq y\leq 1-\f32\delta\right\}$. Then there exist two sequences $\{x_n\}$ and $\{\t x_n\}$ with $x_n\downarrow b$ and $\t x_n\downarrow b$, such that \be\label{c10}k_{\ld,\mu}(x_n)\rightarrow 1-\delta\ \ \text{and}\ \ k_{\ld,\mu}(x_n)\rightarrow 1-3\delta,\ \ \delta>0, \ee and \be\label{c11}\psi_{\ld,\mu}(x_n,y)=m_0,\ \ \psi_{\ld,\mu}(\t x_n,y)<m_0 \ \ \text{for}\ \ |y-1+2\delta|\leq\f\delta2,\ \  x_{n+1}<\t x_n<x_n.\ee
By virtue of Lemma \ref{lb3} and the monotonicity of $\psi_{\ld,\mu}$, we have that $\psi_{\ld,\mu}$ is Lipschitz continuous in an $\bar E$-neighborhood of $I_\delta$ and $\psi_{\ld,\mu}=m_0$ on $I_\delta$.

It follows from \eqref{c11} that there exists a domain $D_n\subset
E\cap\{\psi_{\ld,\mu}<m_0\}$ (please see Figure \ref{fi6}), which is
bounded by the arcs $y_1=1-\f32\delta$, $y_2=1-\f52\delta$,
$\gamma_n^1$ and $\gamma_n^2$. Here, $\gamma_n^1$ and $\gamma_n^2$
are parts of free boundary $\Gamma_{\ld,\mu}\cap\{x\leq x_{n-1}\}$,
and the curve $\gamma_n^1$ lies the right of the curve $\gamma_n^2$.
Denote $h_n=\text{dist}(\gamma_n^1,\gamma_n^2)$. By virtue of that
$x_n\rightarrow b^+$, one has \be\label{c12}h_n\rightarrow 0\ \
\text{as}\ \ n\rightarrow+\infty.\ee

Thanks to the non-oscillation Lemma \ref{lc2}, we have $$0<\delta\leq Ch_n,$$ which contradicts to \eqref{c12}, provided that $n$ is sufficiently large.

{\bf Case 2.} $\underline y\geq 1$. Take a constant $\delta>0$, such that $\delta\leq\f{\limsup_{x\rightarrow b^+}k_{\ld,\mu}(x)-\underline y}{4}$. Denote $\gamma_\delta=\left\{(b,y)\mid \underline y+\f32\delta\leq y\leq \underline y+\f52\delta\right\}$. In an $E$-neighborhood of $\gamma_\delta$, we can obtain a contradiction by using the non-oscillation Lemma \ref{lc2}.

Similarly, we can show that the limits $\lim_{x\rightarrow x_0^+}k_{\ld,\mu}(x)$ and $\lim_{x\rightarrow x_0^-}k_{\ld,\mu}(x)$ exist for any $x_0\in(b,+\infty)$.

{\bf Step 2.} $\lim_{x\rightarrow x_0^+}k_{\ld,\mu}(x)=\lim_{x\rightarrow x_0^-}k_{\ld,\mu}(x)$ for any $x_0\in(b,+\infty)$.

Suppose that there exists a $x_0\in(b,+\infty)$, such that $\lim_{x\rightarrow x_0^+}k_{\ld,\mu}(x)\neq\lim_{x\rightarrow x_0^-}k_{\ld,\mu}(x)$. Without loss of generality, we assume that $\lim_{x\rightarrow x_0^+}k_{\ld,\mu}(x)>\lim_{x\rightarrow x_0^-}k_{\ld,\mu}(x)$. Denote $\gamma=\{(x_0,y)\mid y_1\leq y\leq y_2\}$ with $\lim_{x\rightarrow x_0^-}k_{\ld,\mu}(x)<y_1<y_2<\lim_{x\rightarrow x_0^-}k_{\ld,\mu}(x)$. The monotonicity and Lipschitz continuity of $\psi_{\ld,\mu}$ give that $\gamma$ is the free boundary of $\psi_{\ld,\mu}$ and $\psi_{\ld,\mu}<m_0$ in $E_\e=\{(x,y)\mid x_0<x<x_0+\e,y_1<y<y_2\}$ for small $\e>0$. Then we have $$\t Q_\ld\psi_{\ld,\mu}=0\ \ \text{in}\ \ E_\e,\ \ \psi_{\ld,\mu}=m_0\ \ \text{and}\ \ \f1x\f{\psi_{\ld,\mu}}{\p x}=-\ld\ \ \text{on}\ \ \gamma. $$ Since $\psi_{\ld,\mu}$ is analytic in $E_\e$ for small $\e>0$, thanks to Cauchy-Kovalevskaya theorem, one has
$$\psi_{\ld,\mu}(x,y)=-\ld(x^2-x^2_0)+m_0\ \ \text{in}\ \ \O_\mu\cap\{x_0<x<x_0+\e\},$$ which contradicts to $\psi_{\ld,\mu}=0$ on $N_0$.

{\bf Step 3.} $k_{\ld,\mu}(x)<+\infty$ for any $x\in[b,+\infty)$. We first show that
\be\label{c13}\text{the free boundary $\Gamma_{\ld,\mu}$ is non-empty in $E$}.\ee Suppose that $\Gamma_{\ld,\mu}$ is empty, it implies that
\be\label{c14}\psi_{\ld,\mu}<m_0\ \ \text{in}\ \ E.\ee For any $R>0$, there exists a disc $B_R(X_0)\subset E$ with $X_0=(x_0,y_0)$, such that
$$\f{1}{R}\left(\fint_{B_{R}(X_0)}(m_0-\psi_{\ld,\mu})^2dxdy\right)^{\f{1}{2}}\leq \f{m_0}{R}\leq c^*\Lambda x_0,$$for sufficiently large $R$. It follows from non-degeneracy Lemma \ref{lb5} that
$\psi_{\ld,\mu}=m_0$ in $B_{\f R8}(X_0)$. This contradicts to \eqref{c14}.
With the aid of \eqref{c13}, we can take a maximal interval $(\gamma_1,\gamma_2)\subset (b,+\infty)$, such that $$k_{\ld,\mu}(x)\ \ \text{is finite in $(\gamma_1,\gamma_2)$ and $k_{\ld,\mu}(\gamma_1+0)=k_{\ld,\mu}(\gamma_2-0)=+\infty$}.$$ We first show that \be\label{c15}\gamma_2=+\infty.\ee If not, then $\gamma_2<+\infty$. By using the proof of \eqref{c13}, we can conclude that $\Gamma_{\ld,\mu}$ is non-empty in $E\cap\{x>\gamma_2\}$, and there exists a $\gamma_3\in[\gamma_2,+\infty)$, such that
$$\Gamma_{\ld,\mu}\cap\{\gamma_2<x<\gamma_3\}=\varnothing\ \ \text{and $k_{\ld,\mu}(x)$ is finite in $(\gamma_3,\gamma_3+\e)$ for small $\e>0$}. $$
Denote $D_R=\left\{(x,y)\mid \f{\gamma_1+\gamma_2}{2}<x<\gamma_3+\f\e2, R<y<2R\right\}$ for large $R$, applying the non-oscillation Lemma \ref{lc2} for $\psi_{\ld,\mu}$ in $D_R$, one has
$$R\leq C\left|\f{\gamma_1+\gamma_2}{2}-\gamma_3-\f\e2\right|,$$ where the constant $C$ is independent of $R$. This leads a contradiction for sufficiently large $R>0$.

Next, we will show that $$\gamma_1=b.$$ If not, then $\gamma_1>b$ and we consider the following two cases.

{\bf Case 1.} $k_{\ld,\mu}(x)=\infty$ for any $x\in(b,\gamma_1)$. It follows from Lemma \ref{lb2} that $\f1x\f{\p\psi_{\ld,\mu}}{\p\nu}\geq\ld$ on $T_R=\{(b,y)\mid R<y<2R\}$ for large $R>0$. Denote $D_R=\left\{(x,y)\mid b<x<\f{\gamma_1+\gamma_2}{2}, R<y<2R\right\}$, by using the non-oscillation Lemma \ref{lc2} and Remark \ref{re5} for $\psi_{\ld,\mu}$ in $D_R$, one has
$$R\leq C\f{\gamma_1+\gamma_2-2b}{2},$$ which leads a contradiction for sufficiently large $R>0$.

{\bf Case 2.} There exists a $\gamma_0\in(b,\gamma_1]$, such that
$$\Gamma_{\ld,\mu}\cap\{\gamma_0<x<\gamma_1\}=\varnothing\ \ \text{and $k_{\ld,\mu}(x)$ is finite in $(\gamma_0-\e,\gamma_0)$ for small $\e>0$}. $$
Using the non-oscillation Lemma \ref{lc2} for $\psi_{\ld,\mu}$ in $D_R$ leads a contradiction for sufficiently large $R>0$, where $D_R=\left\{(x,y)\mid \gamma_0-\f\e2<x<\f{\gamma_1+\gamma_2}{2}, R<y<2R\right\}$.

Finally, we can show that $k_{\ld,\mu}(b)<+\infty$ by using the non-oscillation Lemma \ref{lc2} and Remark \ref{re5}.

\end{proof}

In the following, we will show some
important properties, such as, the optimal decay rate of the free
boundary, the convergence rate and the asymptotic behavior of the
subsonic impinging jet in downstream.

\begin{lem}\label{lc4}
The minimizer $\psi_{\ld,\mu}$ and the free boundary $y=k_{\ld,\mu}(x)$
satisfy that\\
(1)   for any sufficiently large $x_0>b$, there exists a constant C (independent of $x_0$) such that
 \be\label{c16}\int_{\O_\mu\cap\{x\geq x_0\}}G(\g\psi,\psi,x;\ld)dxdy\leq \f{C}{x_0^3},\ee where
 $$G(\g\psi,\psi,x;\ld)=xF\left(\left|\f{\nabla\psi}{x}\right|^2;\ld\right)+\left(x\Ld^2-2\l F_1(\ld^2;\ld)\nabla\psi\cdot
e\right)\chi_{\{\psi<m_0\}\cap E}.$$ \\
(2) in the downstream,
 \be\label{c17}x(k_{\ld,\mu}(x)-g_0(x))\rightarrow\f{m_0}{\l\cos\theta}~~ \text{as}~~x\rightarrow+\infty,\ee
 and
  \be\label{c18}\f{\nabla\psi_{\ld,\mu}(x,y)}{x}\rightarrow(-\ld\sin\th,\ld\cos\th)~~ \text{for $(x,y)\in\O_\mu\cap\{\psi_{\ld,\mu}<m_0\}$, as $x\rightarrow+\infty$.}\ee
\end{lem}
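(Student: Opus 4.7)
The proof of this lemma rests on the minimization characterization of $\psi_{\ld,\mu}$ together with the explicit ``downstream profile'' already exploited in the proof of Lemma~\ref{lb1}.

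\textbf{Part~(1): the energy decay \eqref{c16}.} For any large $R$, let $\psi^*_R$ denote the analogue of the test function of Lemma~\ref{lb1}, namely
$$
\psi^*_R(x,y)=\min\bigl\{\ld\max\{x[(y-g_0(R))\cos\theta-(x-R)\sin\theta],0\},\,m_0\bigr\}.
$$
The exact computation already performed in Lemma~\ref{lb1} yields
$$
\int_{\O_\mu\cap\{x\geq R\}} G(\g\psi^*_R,\psi^*_R,x;\ld)\,dxdy \leq \f{C}{R^3}.
$$
For large $x_0$ I would build an admissible competitor $\eta_{x_0}\in K_{\mu}$ that equals $\psi_{\ld,\mu}$ for $x\leq x_0$, equals $\psi^*_{x_0+L}$ for $x\geq x_0+2L$ (with $L$ a fixed constant), and is a convex-combination type interpolation on the transition strip $\{x_0\leq x\leq x_0+2L\}$. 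By Theorem~\ref{lb6}, $\psi_{\ld,\mu}$ is Lipschitz with uniform bound there, and $\psi^*_{x_0+L}$ is explicit, so the interpolation can be arranged so that $\eta_{x_0}\in[0,m_0]$ matches all prescribed boundary values. Minimality of $\psi_{\ld,\mu}$ then gives
$$
\int_{\O_\mu\cap\{x\geq x_0\}} G(\g\psi_{\ld,\mu},\psi_{\ld,\mu},x;\ld)\,dxdy\leq \int_{\O_\mu\cap\{x\geq x_0\}} G(\g\eta_{x_0},\eta_{x_0},x;\ld)\,dxdy,
$$
and the right-hand side is $O(1/x_0^3)$: the tail $\{x\geq x_0+2L\}$ is controlled by the $\psi^*$ estimate with $R=x_0+L$, while the transition strip has bounded length, transverse fluid width $O(1/x_0)$, and uniformly Lipschitz integrand, and hence contributes an additional $O(1/x_0^3)$.

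\textbf{Part~(2): downstream asymptotics \eqref{c17} and \eqref{c18}.} I would combine \eqref{c16} with a blow-up argument along the downstream wall. For a sequence $x_n\to\infty$ and bounded $X,Y$, set
$$
\Phi_n(X,Y)=\f{1}{m_0}\,\psi_{\ld,\mu}\bigl(x_n+X,\,g_0(x_n+X)+Y/x_n\bigr).
$$
In these variables the wall $N_0$ becomes $\{Y=0\}$, the free boundary becomes $\{Y=x_n(k_{\ld,\mu}(x_n+X)-g_0(x_n+X))\}$, and the thin jet is unfolded. The uniform Lipschitz bound from Theorem~\ref{lb6} and the decay \eqref{c16} imply, after extracting a subsequence, that $\Phi_n$ converges in $C^{\alpha}$ on compact subsets to a limit $\Phi_\infty$ that is independent of $X$ and solves the reduced one-dimensional model problem on a strip $\{0<Y<H\}$ with $\Phi_\infty(0)=0$, $\Phi_\infty(H)=1$ and reduced free-boundary condition $\Phi'_\infty(H)=\ld\cos\theta/m_0$. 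Uniqueness for this one-dimensional problem forces $\Phi_\infty(Y)=\ld\cos\theta\,Y/m_0$ and $H=m_0/(\ld\cos\theta)$. Since the limit is independent of the extracted subsequence, the convergence $x_n(k_{\ld,\mu}(x_n)-g_0(x_n))\to m_0/(\ld\cos\theta)$ holds along the full sequence, giving \eqref{c17}; the accompanying convergence of $\g\Phi_n$ translated back to the original variables yields \eqref{c18}.

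\textbf{Main obstacle.} The technical heart of Part~(1) is constructing $\eta_{x_0}$ so that it remains in the admissible class $K_{\mu}$ at the interpolation seams while keeping the interpolation energy bounded by $C/x_0^3$; this requires a careful choice of transition respecting both the obstacle $\leq m_0$ and the wall conditions on $N_0$. In Part~(2), the main difficulty is rigorously identifying the blow-up limit as one-dimensional and passing to the limit in the free boundary condition: the decay rate \eqref{c16} is essential here, since it rules out $X$-oscillation of the rescaled profile and pins the limiting strip width to the value dictated by mass flux conservation.
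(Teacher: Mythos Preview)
Your plan captures the right architecture (competitor for the decay, blow-up for the asymptotics), but each part has a genuine gap.

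\textbf{Part (1).} The claim that the transition strip contributes $O(1/x_0^3)$ does not hold with a one-shot competitor. On $\{x_0\le x\le x_0+2L\}$ you only know $|\nabla\psi_{\ld,\mu}|\le C\Lambda x$ from Lemma~\ref{lb3}, so $|\nabla\eta_{x_0}/x-\ld e|$ is merely $O(1)$ and the integrand $G$ is of order $x_0$. Even granting the fluid width $O(1/x_0)$ (which you would have to extract first from the non-degeneracy Lemma~\ref{lb5}), the strip has area $O(1/x_0)$ and the resulting energy is $O(1)$, not $O(1/x_0^3)$. The $1/x_0^3$ rate is \emph{not} visible from a single comparison; in the paper it comes from the iterative/hole-filling argument of Proposition~4.4 in \cite{CD}, which produces a differential inequality for $s\mapsto\int_{\{x\ge s\}}x\,|\nabla\psi/x-\ld e\chi|^2$ and then integrates it. Your competitor is the right object, but it has to be fed into such an iteration rather than used once.

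\textbf{Part (2).} Your anisotropic rescaling $\Phi_n(X,Y)=\psi_{\ld,\mu}(x_n+X,\,g_0(x_n+X)+Y/x_n)/m_0$ does not enjoy $C^\alpha$ compactness in both variables: since $|\nabla\psi_{\ld,\mu}|\le Cx$, one has $|\partial_Y\Phi_n|\le C$ but $|\partial_X\Phi_n|$ is a priori of order $x_n$, and the energy decay \eqref{c16} only forces $\partial_X\Phi_n/x_n\to 0$ in $L^2$, which gives no bound on $\partial_X\Phi_n$ itself. So the limit need not be $X$-independent by this route. The paper instead uses the \emph{isotropic} rescaling $\psi_n(\tilde X)=\psi_{\ld,\mu}(X_n+\tilde X/x_n)$, for which $|\nabla\psi_n|=|\nabla\psi_{\ld,\mu}|/x_n\le C\Lambda$ is uniformly bounded; the decay \eqref{c16} then directly gives $\int|\nabla\psi_n-\ld e\chi_{\{\psi_n<m_0\}}|^2\to 0$, so the weak limit $\psi_0$ satisfies $\nabla\psi_0=\ld e\chi_{\{\psi_0<m_0\}}$ a.e.\ and is identified explicitly. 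Free boundary convergence is then obtained via Lemmas~\ref{lb4}--\ref{lb5} (Hausdorff) and the flatness theory of \cite{ACF4} ($C^{1,\alpha}$), which yields both \eqref{c17} and \eqref{c18}. If you want to keep an anisotropic viewpoint you would need a separate argument controlling $\partial_X\Phi_n$, but the isotropic route is both simpler and what the paper does.
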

\begin{proof}
(1) By using the inequality \eqref{b16}, we have
\be\label{c19}\int_{\O_\mu\cap\{x\geq x_0\}}G(\g\psi,\psi,x;\ld)dxdy\leq C\int_{\O_\mu\cap\{x\geq
x_0\}}x\left|\f{\nabla\psi_{\ld,\mu}}{x}-\l
e\chi_{\{\psi<m_0\}}\right|^2dxdy,\ee for $x_0>b$.

It follows from the proof of Proposition 4.4 in \cite{CD} that
$$\int_{\O_\mu\cap\{x\geq
x_0\}}x\left|\f{\nabla\psi_{\ld,\mu}}{x}-\l
e\chi_{\{\psi<m_0\}}\right|^2dxdy\leq \f{C}{x_0^3},$$ for sufficiently large $x_0>b$, where $C$ is a
constant independent of $x_0$. This together with \eqref{c19} gives \eqref{c16}.

(2) For a sequence $\{x_n\}$ with $x_n\rightarrow+\infty$, set
$\psi_n(\t X)=\psi_{\ld,\mu}\left(X_n+\f{\t X}{x_n}\right)$ with $X_n=(x_n,g_0(x_n))$ and $\t X=(\t x,\t y)$.
Obviously,
$\nabla\psi_n(\t X)=\f1{x_n}\nabla\psi_{\ld,\mu}\left(X_n+\f{\t X}{x_n}\right)$.
%Thus, for any $X>0$, one has
%$$\ba{rl}&\int_{\O_\mu\cap\{|r-x_n|<R\}}r\left|\f{\nabla\psi_{\ld,\mu}}{x}-\ld eI_{\{\psi_{\ld,\mu}<m_0\}}\right|^2dxdy\\
%=&\int_{\O_\mu\cap\{|\bar{x}|<Rx_n\}}\f{1}{x_n^2}(x_n+\f{\bar{x}}{x_n})\left|\f{\nabla\psi_n}{1+\f{\bar{x}}{x_n^2}}-\ld
%eI_{\{\psi_n<m_0\}}\right|^2d\bar{x}d\bar{y}.\ea$$
For any $R>0$, thanks to \eqref{c16}, we have
$$\ba{rl}I_n=&\int_{\{|\t x|<Rx_n\}}\left(1+\f{\t x}{x^2_n}\right)\left|\f{\nabla\psi_n}{1+\f{\t x}{x_n^2}}-\ld e\chi_{\{\psi_n<m_0\}}\right|^2d\t xd\t y\\
=&x_n\int_{\O_\mu\cap\{x_n-R|<x<x_n+R\}}x\left|\f{\nabla\psi_{\ld,\mu}}{x}-\ld e\chi_{\{\psi_{\ld,\mu}<m_0\}}\right|^2dxdy\\
\leq&\f{Cx_n}{(x_n-R)^3}\rightarrow 0\ \ \text{as
$x_n\rightarrow+\infty$}.\ea$$ %that \be\label{c35}\nabla\psi_n-\l
%eI_{\{\psi_n<m_0\}}\rightarrow0~~
%\text{in}~L^2_{loc}(\mathbb{X}^2),~\text{as}~~n\rightarrow+\infty.\ee

Without loss of generality, we may assume that \be\label{c20}\text{$\psi_{n}\rightarrow
\psi_0$ weakly in $H_{loc}^1(\mathbb{R}^2)$ and $a.e.$ in $\mathbb{R}^2$,}\ee and $$\chi_{\{\psi_n<m_0\}}\rightarrow\gamma\ \ \text{weakly star in $L_{loc}^{\infty}(\mathbb{R}^2)$ and $\chi_{\{\psi_0<m_0\}}\leq\gamma\leq1$}.$$ Furthermore, one has
$$\ba{rl}\int_{\mathbb{R}^2}\left|\nabla\psi_0-\ld
e\chi_{\{\psi_0<m_0\}}\right|^2d\t xd\t y\leq\int_{\mathbb{R}^2}\left|\nabla\psi_0-\ld
e\gamma\right|^2d\bar{x}d\bar{y}\leq
\liminf_{n\rightarrow+\infty} I_n=0,\ea$$ which gives that
\be\label{c21}\nabla\psi_0=\l \chi_{\{\psi_0<m_0\}} e\ \ \ \text{a.e. in $\mathbb{R}^2$ and}~~\psi_{0}(0)=0.\ee
%$$I_{\{\psi_n<m_0\}}\rightarrow
%I_{\{\psi_0<m_0\}}\ \ \text{weakly star
%in}~~L_{loc}^{\infty}(\mathbb{R}^2),$$ and
%$$\p{\{\psi_n<m_0\}}\rightarrow
%\p{\{\psi_0<m_0\}}\ \ \text{locally in Hausdorff metric},$$ which
%together with \eqref{c35} show that
%By virtue of Lemma, we conclude that $|\g\psi_n(\bar r,\bar y)|\leq
%C\Ld$ for $|\bar X|< R$ and the constant $C$ is independent of $n$
%and $R$.

%the non-degeneracy lemma \ref{lb5} gives that
%$$\f1{x}\left(\fint_{ B_r(0)}(m_0-\psi_n)^2dxdy\right)^{\f12}=\f1{r x_n}\left(\fint_{ B_r(X_n)}(m_0-\psi_{\ld,\mu})^2dxdy\right)^{\f12}\geq C^*\Ld,
%$$ for any $r>0$. Taking $n\rightarrow+\infty$ in above inequality, one has
%$$\f1{\rho}\left(\fint_{ B_r(0)}(m_0-\psi_0)^2dxdy\right)^{\f12}\geq C^*\Ld,
%$$ which implies that $\psi_0\not\equiv m_0$ in $B_r(0)$ for any
%$r>0$.

Denote $\o(s,t)=\psi_0(\t x,\t y)$ with $s=\t x\cos\th+\t y\sin\th$ and $t=\t y\cos\th-\t x\sin\th$, one has
$$\f{\p\o}{\p s}=\f{\p\psi_0}{\p \t x}\cos\th+\f{\p\psi_0}{\p\t y}\sin\th\ \ \text{and}\ \ \f{\p\o}{\p t}=-\f{\p\psi_0}{\p \t x}\sin\th+\f{\p\psi_0}{\p\t y}\cos\th.$$ By virtue of \eqref{c21}, one has
\be\label{c22}\f{\p\o(s,t)}{\p s}=0\ \ \text{and}\ \ \f{\p\o(s,t)}{\p t}=\ld \chi_{\{\o<m_0\}}\ \ \text{a.e. in $\mathbb{R}^2$},\ee which imply that $\o(s,t)$ is only a function of $t$. Moreover, $\o(s,t)$ is monotone increasing with respect to $t$. In view of $\o(0,0)=0$ and $0\leq\o\leq m_0$, it follows from \eqref{c22} that $$\psi_0(\t x,\t y)=\o(t)=\min\{\max\{\ld t,0\},m_0\}=\min\{\max\{\ld (\t y\cos\th-\t x\sin\th),0\},m_0\}\ \text{ in $\mathbb{R}^2$}.$$

To obtain the asymptotic behavior of the free boundary $\Gamma_{\ld,\mu}$, we first show that \be\label{c23}\p\{\psi_n<m_0\} \ \ \text{converges to}\ \  \p\{\psi_0<m_0\} \ \text{locally in Hausdorff metric,} \ \text{as} \ \ n\rightarrow+\infty. \ee
Definition of Hausdorff distance $d(D,F)$
between two sets $D$ and $F$ is as follows
$$d(D,F)=\inf\left\{\e>0\mid D\subset\bigcup_{X\in F} B_\e(X) \ \ \text{and}\ \ F\subset\bigcup_{X\in D} B_\e(X)\right\}.$$

For any $X_0=(x_0,y_0)\notin \p\{\psi_0<m_0\}$, the continuity of $\psi_0$ gives that $\psi_0(X_0)<m_0$ or $\psi_0(X_0)=m_0$.
If $\psi_0<m_0$ in $B_r(X_0)$, it follows from \eqref{c20} that
$$\lim_{n\rightarrow+\infty}\fint_{\p B_r(X_0)}(m_0-\psi_n)dS>0,$$ which implies that $$\f1r\fint_{\p B_{\f r{x_n}}\left(X_n+\f{X_0}{x_n}\right)}(m_0-\psi_{\ld,\mu})dS=\f1r\fint_{\p B_r(X_0)}(m_0-\psi_n)dS>C\Ld,$$ for sufficiently large $n$ and small $r>0$. Thanks to Lemma \ref{lb4}, one has $$\psi_{\ld,\mu}<m_0\ \ \text{in $B_{\f r{x_n}}\left(X_n+\f{X_0}{x_n}\right)$, namely, $\psi_n<m_0$ in $B_r(X_0)$},$$ for sufficiently large $n$ and small $r>0$.

 If $\psi_0=m_0$ in $B_r(X_0)$, it follows from \eqref{c20} that for a.e $r>0$,
$$\lim_{n\rightarrow+\infty}\f2r\left(\fint_{B_{\f r2}(X_0)}(m_0-\psi_n)^2dxdy\right)^{\f12}=0,$$ which together with Lemma \ref{lb5} gives that $\psi_n=m_0 \ \text{in} \ B_{\f r{8}}(X_0)$ for
sufficiently large $n$.

 Hence, we
have the convergence of the free boundary in the Hausdorff distance.

For any $R>0$ and small $\e>0$, it follows from \eqref{c23} that
there exists a large $N_{\e,R}$, such that the free boundary
$B_R(0)\cap\p\{\psi_n<m_0\}$ and $B_R(0)\cap\p\{\psi_0<m_0\}$  lie
each within an $\e$-neighborhood of one another, provided that
$n>N_{\e,R}$. Thus we can check that the free boundary
$B_R(0)\cap\p\{\psi_n<m_0\}$ satisfies the flatness condition (see
Section 5 in \cite{ACF4}), it follows from Theorem 6.3 in
\cite{ACF4} that
$$B_R(0)\cap\p\{\psi_n<m_0\}\rightarrow B_R(0)\cap\p\{\psi_0<m_0\}\ \ \text{in}\ \ C^{1,\alpha},$$ which implies that
$$k_{\ld,\mu}'(x_n+\t x)\rightarrow\tan\th\ \ \text{as}\ \ x_n\rightarrow+\infty.$$ Since $\t y=x_n\left(k_{\ld,\mu}\left(x_n+\f{\t x}{x_n}\right)-g_0(x_n)\right)$ is the free boundary of $\psi_n$, we have  $$\f{d\t y}{d\t x}=k'_{\ld,\mu}\left(x_n+\f{\t x}{x_n}\right)\rightarrow \tan\th\ \ \text{and}\ \ x_n\left(k_{\ld,\mu}\left(x_n+\f{\t x}{x_n}\right)-g_0(x_n)\right)\rightarrow \t x\tan\th+\f{m_0}{\ld\cos\th},$$ which imply that
$$k'_{\ld,\mu}\left(x_n\right)\rightarrow\tan\th\ \ \text{and}\ \ x_n\left(k_{\ld,\mu}\left(x_n\right)-g_0(x_n)\right)\rightarrow \f{m_0}{\ld\cos\th}.$$

The compactness of $\psi_n$ gives that
$$\g\psi_n\rightarrow \g\psi_0=\ld(-\cos\th,\sin\th)\ \ \text{uniformly in any compact subset of $S$},$$ where $S=\left\{(\t x,\t y)\mid0<\t y\cos\th-\t x\sin\th<\f{m_0}{\ld}\right\}$, and thus \eqref{c18} holds.

%The following, we concern the convergence rate of the impinging jet
%in downstream. For $0<\alpha<2$, set $\psi_k(\t x,\t
%y)=\psi_{\ld,\mu}\left(x_k+\t x,g(x_k)+\t{y}\right)$ and
%$X_k=(x_k,g(x_k))$ for any $x_k\rightarrow+\infty$, we have
%$$\ba{rl}&\int_{\{|\t X|<R\}}\left(x_k+\t x\right)^{2\a}\left|\f{\nabla\psi_k}{x_k+\t x}-
%\ld eI_{\{\psi_n<m_0\}}\right|^2d\t{x}d\t{y}\\
%=&\int_{\O_\mu\cap\{x_k-R<x<x_k+R\}}x^{2\alpha}\left|\f{\nabla\psi_{\ld,\mu}}{x}-\ld eI_{\{\psi_{\ld,\mu}<m_0\}}\right|^2dxdy\\
%\leq&(x_n+R)^{2\a-1}\int_{\O_\mu\cap\{x_k-R<x<x_k+R\}}x\left|\f{\nabla\psi_{\ld,\mu}}{x}-\ld eI_{\{\psi_{\ld,\mu}<m_0\}}\right|^2dxdy\\
%\leq&\f{C(x_n+R)^{(2\alpha-1)}}{(x_n-R)^3}\rightarrow0 \ \
%\text{as}\  \ n\rightarrow+\infty,\ea$$ which implies that
%\be\label{c36}\left(x_k+\t
%x\right)^{2\a}\left|\f{\nabla\psi_k}{x_k+\t x}- \ld
%eI_{\{\psi_n<m_0\}}\right|^2\rightarrow0 \ \ \text{a.e in
%$\mathbb{R}^2$ and}~~\a\in(0,2).\ee Therefore, we have
%$$x^{\alpha}\left|\f{\nabla\psi_{\ld,\mu}}{x}-\ld eI_{\{\psi<m_0\}}\right|\rightarrow0~~~\text{a.e. for $y\in\mathbb{R}$, %as}~~x\rightarrow\infty,~~0<\alpha<2.$$

%Therefore, we finish the proof of Proposition 3.8.

\end{proof}

Finally, we will give the gradient estimate of $\psi_{\ld,\mu}$ near the initial point
of $\Gamma_{\ld,\mu}$ in the following.

\begin{lem}\label{lc5} Let $P=(b,k_{\ld,L}(b))$, then
$$|\g\psi_{\ld,\mu}(X)|\leq C\ \ \text{ in $B_r(P)$},$$ for some $r>0$,
where $C$ is constant depending only on $\Lambda, b$ and $\vartheta$,
but not on $m_0$.

\end{lem}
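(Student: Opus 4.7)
The goal is a gradient bound at the tip $P=(b,k_{\ld,\mu}(b))$ of the free boundary, with constant uniform in $m_0$. The difficulty is that $P$ lies on the vertical arc $T=\{(b,y)\mid y>1\}\subset\partial\Omega_\mu$, so no ball around $P$ is contained in $E$ and Lemma~\ref{lb3} is not directly applicable. My plan is to extend the argument of Lemma~\ref{lb3} to allow the reference ball to intersect $T$, and then to derive $|\g\psi_{\ld,\mu}|\le C$ from interior elliptic regularity after rescaling.

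Setting $v=m_0-\psi_{\ld,\mu}$, the admissible-set constraint forces $\psi_{\ld,\mu}=m_0$ in the chamber $\{a<x<b,\,y>g(x)\}$ and on $T$, so the zero set $\{v=0\}$ in a neighborhood of $P$ consists of the chamber to the left of $T$, the portion of $T$ above $P$, the coincidence region $\{\psi_{\ld,\mu}=m_0\}\cap\{x>b\}$, and the graph $\Gamma_{\ld,\mu}$ starting at $P$. In the open set $\{v>0\}$ the equation $\g\cdot f_\eta(\g\psi_{\ld,\mu}/x;\ld)=0$ is uniformly elliptic because $x\ge b/2$ in $B_r(P)$ for small $r$. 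The key intermediate step is the linear-decay estimate
\[
v(X)\;\le\;C\Lambda\,x\,\mathrm{dist}(X,\{v=0\})\qquad\text{for all } X\in B_r(P),
\]
with $C=C(\Lambda,b,\vartheta)$; this is to be proved by mimicking Step~1 of Lemma~\ref{lb3} via the rescaling $\phi(X)=v(X_1+dX)/(d\,x_1)$ with $d=\mathrm{dist}(X_1,\{v=0\})$, Harnack transport of the putative contradiction lower bound $\phi(0)>M$ to a touching point $\tilde X\in\{v=0\}$, supersolution comparison $\phi\le\Psi$ in a ball around $\tilde X$, the exponential barrier $\varphi(X)=C_0(e^{-\nu|X-\tilde X|^2}-e^{-\nu})$, and the variational energy inequality furnished by the minimality of $\psi_{\ld,\mu}$, culminating in $M\le C\Lambda$.

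Once the linear decay is in hand, for any $X_0\in B_{r/2}(P)\cap\{v>0\}$, rescaling by $d_0=\mathrm{dist}(X_0,\{v=0\})$ gives $\phi$ on $B_1(0)$ solving a uniformly elliptic equation with $0\le\phi\le C\Lambda$; the standard interior gradient estimate from Chapter~12 of \cite{GT} then yields $|\g\phi(0)|\le C$, which unwinds to $|\g\psi_{\ld,\mu}(X_0)|\le C x_0\le C(b+r)$. On $\{v=0\}\cap B_{r/2}(P)$ the gradient vanishes off $\Gamma_{\ld,\mu}$, while on $\Gamma_{\ld,\mu}$ the free-boundary relation gives $|\g\psi_{\ld,\mu}|=\ld x\le\ld(b+r)$; combining these cases yields the desired uniform bound in $B_{r/2}(P)$.

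The main obstacle is that in the linear-decay argument the touching point $\tilde X$ may lie on the straight boundary segment $T$ rather than on $\Gamma_{\ld,\mu}$. In that case the ball $B_1(\tilde X)$ used in the barrier construction crosses the line $x=b$, so the Dirichlet problem for $\Psi$, the exponential barrier $\varphi$, and the energy comparison all must be reworked on the half-ball $B_1(\tilde X)\cap\{x\ge b\}$, exploiting $v=0$ on $T\cap\overline{B_1(\tilde X)}$ as an additional Dirichlet datum in place of the Bernoulli condition. Uniform ellipticity and the independence of the constants from $m_0$ are preserved throughout since $x\ge b/2$ and all invoked constants (the ellipticity bound $\vartheta$, Harnack and barrier constants) depend only on $\Lambda, b$ and $\vartheta$.
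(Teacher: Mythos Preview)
Your plan is close in spirit to the paper's, but the organization differs and your handling of the case $\tilde X\in T$ contains a genuine gap. The paper does \emph{not} attack the linear decay $v\le C\Lambda x\,d(X)$ directly in $B_r(P)$; instead it rescales dyadically, setting $\psi_\gamma(X)=(m_0-\psi_{\ld,\mu}(P+\gamma X))/\gamma$ on the annulus $D_1=\{\,1/2<|X|<5/2,\ P+\gamma X\in\O_\mu\,\}$ for each small $\gamma>0$. Because the annulus excludes the origin, the fixed boundary portion (the rescaled $N\cup T$) inside $D_1$ is $C^{2,\alpha}$ with no corner, and $\psi_\gamma=0$ there; Harnack's inequality therefore extends up to it. Crucially, the free boundary always enters the annulus (since $\Gamma$ emanates from $P$), so Lemma~\ref{lb3}'s core energy-comparison step yields $\psi_\gamma\le C\Ld$ near those free-boundary points, and a Harnack chain of \emph{uniformly bounded} length (interior plus boundary Harnack) propagates this throughout $D_2=\{1<|X|<2\}$, independently of $\sup\psi_\gamma=m_0/\gamma$. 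Unwinding gives $|\g\psi_{\ld,\mu}|\le C$ on each dyadic shell, hence on $B_r(P)$.

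Your direct approach requires separate treatment of the touching point $\tilde X\in T$, and the mechanism you propose---``energy comparison reworked on the half-ball $B_1(\tilde X)\cap\{x>b\}$, using $v=0$ on $T$ in place of the Bernoulli condition''---fails in the sub-case where that half-ball contains no coincidence set. This occurs, for instance, when $k_{\ld,\mu}(b)>1$ and $\tilde X$ lies on $T$ strictly below $P$: then $\phi>0$ throughout the open half-ball, the indicator term $\chi_{\{\phi=0\}}$ vanishes there, and the energy inequality collapses to $\int|\g(\Psi-\phi)|^2\le 0$, whence $\Psi=\phi$ and no bound on $M$ follows. The Dirichlet datum $v=0$ on $T$ by itself carries no $m_0$-independent information (compare Theorem~\ref{lb6}, whose bound near $T$ \emph{does} depend on $m_0$). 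What is really needed in this case is not an energy comparison at $\tilde X$, but a Harnack chain back to a point whose nearest zero lies on the free boundary; the paper's dyadic-annulus structure guarantees such a point exists at comparable scale and that the chain has bounded length, which is exactly what your direct setup lacks.
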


\begin{proof} Without loss of generality, we assume that $P=A$. For any small $\gamma>0$, it suffices to show that
$$|\g\psi_{\ld,\mu}(X)|\leq C\ \ \text{in}\ \
\O\cap\{\gamma<r<2\gamma\},$$ where $r=|X-A|$. Denote
$$\psi_\gamma(X)=\f{m_0-\psi_{\ld,\mu}(A+\gamma X)}{\gamma}\ \ \text{in}\ \ D_1=\left\{X\mid \f12<|X|<\f52, A+\gamma
X\in\O\right\},$$ and
$$D_2=\left\{X\mid 1<|X|<2, A+\gamma
X\in\O\right\}.$$ Then we have $$\g\cdot f_\eta\left(\f{\g\psi_\gamma}{b+\gamma x};\ld\right)=0  \ \text{in}~~ D_1\cap\{\psi_\gamma>0\}.$$
It follows from Lemma \ref{lb1} that
$$\psi_\gamma=0,\ \ \f{1}{b+\gamma x}\f{\p\psi_\gamma}{\p\nu}=\ld\ \ \text{on the free boundary of $\psi_\gamma$, and $0\leq\psi_\gamma\leq\f{m_0}\gamma$}.$$

Since $\p D_1\cap\{X\mid A+\gamma X\in\p\O_\mu\}$ is
$C^{2,\alpha}$-smooth and $\psi_\gamma=0$ on $\p D_1\cap\{X\mid
A+\gamma X\in\p\O_\mu\}$, the Harnack's inequality is still valid up
to this part of the boundary. By using the similar arguments in the
proof of Lemma \ref{lb3}, we have
$$|\g\psi_{\ld,\mu}(A+\gamma X)|=|\g\psi_\gamma(X)|\leq C\ \ \text{in}\ \ D_2,$$ where $C$ is a
constant depending on $\Lambda, b$ and $\vartheta$, but not on
$\f{m_0}\gamma$. Therefore, we obtain the assertion of this lemma.

\end{proof}

\subsection{ Continuous dependence of $\psi_{\ld,\mu}$ and $\Gamma_{\ld,\mu}$ with respect to $\ld$}

To obtain the continuous fit condition, we will show that the minimizer $\psi_{\ld,\mu}$ and the free boundary $\Gamma_{\ld,\mu}$ are continuous dependence with respect to the parameter $\ld\leq\Pi_\ld-4\t\e$.

\begin{lem}\label{lc6}Let $\psi_{\ld_n,\mu}$ be a minimizer to the variational problem $(P_{\ld_n,\mu})$
with the admissible set $K_\mu$ and $\ld_n\leq\Pi_{\ld_n}-4\t\e$, then we have
$$\psi_{\ld_n,\mu}\rightharpoonup \psi_{\ld,\mu}\ \ \text{in}\ \ H^1_{loc}(\O_{\mu}) \ \text{and}
\ \psi_{\ld_n,\mu}\rightarrow \psi_{\ld,\mu}\ \ \text{a.e. in}\ \
\O_{\mu},$$ as $\lambda_n\rightarrow\ld\leq\Pi_\ld-4\t\e$, where
$\psi_{\ld,\mu}$ is the minimizer to the variational problem
$(P_{\ld,\mu})$.

\end{lem}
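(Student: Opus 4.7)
The plan is a standard compactness plus limit-passage argument. First, I would establish uniform bounds on the family $\{\psi_{\lambda_n,\mu}\}$. Since $\lambda_n \leq \Pi_{\lambda_n} - 4\tilde\varepsilon$ with $\lambda_n \to \lambda < \Pi_\lambda - 4\tilde\varepsilon$, the quantities $\Lambda(\lambda_n^2)$, $F_1(\lambda_n^2;\lambda_n)$, and the ellipticity constant $\vartheta$ of $f(\cdot;\lambda_n)$ stay uniformly bounded. Testing the minimization inequality against a fixed admissible competitor $\psi_0 \in K_\mu$ (the one constructed in the proof of Lemma \ref{lb1}) and using \eqref{b15}--\eqref{b16} yields a uniform $L^2$ bound on $\nabla\psi_{\lambda_n,\mu}$ on any compact subset of $\Omega_\mu$; combined with $0 \leq \psi_{\lambda_n,\mu} \leq m_0$ from Lemma \ref{lb2} this gives $H^1_{\mathrm{loc}}$-weak precompactness. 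Lemma \ref{lb3}, Lemma \ref{lc5} and Theorem \ref{lb6} then supply uniform local Lipschitz estimates in $\bar\Omega_\mu$ away from $A$ and the corner points of $\partial\Omega_\mu$, with constants independent of $n$.

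From these estimates I would extract a subsequence (not relabeled) with $\psi_{\lambda_n,\mu} \rightharpoonup \psi_\ast$ in $H^1_{\mathrm{loc}}(\Omega_\mu)$ and $\psi_{\lambda_n,\mu} \to \psi_\ast$ uniformly on compact subsets of $\bar\Omega_\mu$ avoiding the singular points. The prescribed boundary values on $N, N_0, I$ and $L_\mu$ transfer to $\psi_\ast$ by uniform convergence, so $\psi_\ast \in K_\mu$.

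The core step is to show $\psi_\ast$ minimizes $J_{\lambda,\mu}$. For any $\phi \in K_\mu$, minimality gives $J_{\lambda_n,\mu}(\psi_{\lambda_n,\mu}) \leq J_{\lambda_n,\mu}(\phi)$, and the right-hand side converges to $J_{\lambda,\mu}(\phi)$ by smooth dependence of $F(\cdot;\lambda)$, $F_1(\lambda^2;\lambda)$ and $\Lambda^2(\lambda^2)$ on $\lambda$ (cf.\ \eqref{bb0}) together with dominated convergence. For the left-hand side I would argue lower semicontinuity piece by piece: the bulk integrand $xF(|\nabla\psi|^2/x^2;\lambda)$ is convex in $\nabla\psi$ with parameters converging uniformly, so its integral is weakly lsc; the indicator term $\Lambda^2 \int_E x\chi_{\{\psi<m_0\}}$ is lsc by Fatou, since uniform convergence forces $\chi_{\{\psi_{\lambda_n,\mu}<m_0\}}(X) \geq \chi_{\{\psi_\ast<m_0\}}(X)$ eventually; and the linear term $-2\lambda F_1 \int \nabla\psi\cdot e\,\chi_{\{\psi<m_0\}}$ collapses via the a.e.\ identity $\nabla\psi\cdot e\,\chi_{\{\psi<m_0\}} = \nabla\psi\cdot e$ (valid since $\psi \leq m_0$ and $\nabla\psi = 0$ a.e.\ on $\{\psi=m_0\}$) to a divergence integral that, by the divergence theorem on the bounded set $E\cap\Omega_\mu$, depends only on the prescribed boundary values of $\psi$ and thus converges to its $\psi_\ast$-analogue. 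Combining the three pieces yields $J_{\lambda,\mu}(\psi_\ast) \leq \liminf J_{\lambda_n,\mu}(\psi_{\lambda_n,\mu}) \leq J_{\lambda,\mu}(\phi)$. Uniqueness of the minimizer (Lemma \ref{lc1}) forces $\psi_\ast = \psi_{\lambda,\mu}$, and since every subsequence delivers the same limit, the full sequence converges.

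The main obstacle is the non-convex free-boundary indicator contribution to $J$, since $\chi_{\{\psi_{\lambda_n,\mu}<m_0\}}$ only converges weakly-$\ast$ in $L^\infty$ in general. Fortunately the direction of lower semicontinuity required here is the favorable one and follows from Fatou once uniform convergence is in hand; the a priori more delicate linear-in-gradient piece multiplying $\chi$ is neutralized by the almost-everywhere identity which lets it be rewritten as a pure divergence and thereby reduced to boundary data. After that, the argument is routine.
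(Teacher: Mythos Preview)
Your lower–semicontinuity argument has a genuine gap: the three–term splitting of $G$ is not legitimate over $E\cap\Omega_\mu$, because that set is \emph{not} bounded and the three pieces are not separately integrable. Recall that $E=\Omega\cap\{x>b\}$ and that $\Omega_\mu$ is only truncated in the upstream direction; it still extends to $x\to+\infty$ along $N_0$. By Lemma~\ref{lc4}, for the minimizer one has $x\bigl(k_{\lambda_n,\mu}(x)-g_0(x)\bigr)\to m_0/(\lambda_n\cos\theta)>0$ and $\frac{\nabla\psi_{\lambda_n,\mu}}{x}\to \lambda_n e$ in the downstream, so each of
\[
\int_{E\cap\{\psi<m_0\}} xF\Bigl(\bigl|\tfrac{\nabla\psi}{x}\bigr|^2;\lambda_n\Bigr)\,dxdy,\qquad
\Lambda_n^2\!\int_{E\cap\{\psi<m_0\}} x\,dxdy,\qquad
2\lambda_n F_1(\lambda_n^2;\lambda_n)\!\int_{E\cap\{\psi<m_0\}}\nabla\psi\cdot e\,dxdy
\]
behaves like $\int_b^\infty (\text{positive const})\,dx=+\infty$. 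Only the full combination $G$ is integrable, precisely because of the cancellation recorded in \eqref{b15}--\eqref{b16}; once you separate the terms, that cancellation is lost and none of ``weak lsc of the convex part'', ``Fatou for the indicator'', or ``divergence theorem for the linear part'' makes sense. In particular your divergence–theorem reduction of the linear term to ``prescribed boundary values'' is doubly problematic: the domain is unbounded, and the portion $\{x=b,\ g_0(b)<y<1\}$ of $\partial(E\cap\Omega_\mu)$ lies in the interior of $\Omega_\mu$, where $\psi$ is not prescribed.

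The paper handles this by a genuinely different mechanism. Instead of weak lower semicontinuity, it first upgrades the convergence: using the non--degeneracy Lemma~\ref{lb5} and Lemma~\ref{lb4} it shows that the free boundaries converge in the Hausdorff distance, hence $\chi_{\{\psi_{\lambda_n,\mu}<m_0\}}\to\chi_{\{\omega<m_0\}}$ in $L^1_{\mathrm{loc}}$, and then that $\nabla\psi_{\lambda_n,\mu}\to\nabla\omega$ a.e.\ (away from the free boundary via interior estimates, on $\{\omega=m_0\}$ via a density--point argument). With this strong information one can pass to the limit in $G$ itself on bounded truncations $\Omega_R$, using a cutoff competitor $\phi+(1-\xi_\delta)(\psi_n-\omega)$, to obtain the local minimality \eqref{c30}. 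Finally, because competitors in $K_\mu$ need not agree with $\omega$ at infinity, the identification $\omega=\psi_{\lambda,\mu}$ is done not by a direct comparison but by an $\varepsilon$--shifting argument combined with the downstream asymptotics of Lemma~\ref{lc4} and the uniqueness Lemma~\ref{lc1}. If you want to salvage an lsc approach, you must keep $G$ intact on $E\cap\{\psi<m_0\}$ (where it is nonnegative and convex in $\nabla\psi$) and only split off $xF(|\nabla\psi/x|^2;\lambda)$ on the bounded set $\Omega_\mu\cap\{x\le b\}$; but you will still need a separate argument at infinity to compare with an arbitrary $\phi\in K_\mu$.
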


\begin{proof} Denote $\psi_n=\psi_{\ld_n,\mu}$ for simplicity.
By virtue of Proposition \ref{lb1}, we have
$|\g\psi_n|\leq C$ in any compact subset of $\O_\mu$,
where the constant $C$ is independent of $n$. Then there exists a
subsequence $\{\psi_n\}$ such that
\be\label{c24}\psi_n\rightharpoonup \o\ \ \text{weakly
in}\ \ H_{loc}^1(\O_{\mu})\ \text{and}\ \psi_n\rightarrow
\o \ \text{in} \ C_{loc}^\alpha(\O_{\mu})\ \ \text{for all}\ \
0<\alpha<1,\ee and $$\g\psi_n\rightarrow \g\o\
\ \text{weakly star in}\ \ L_{loc}^\infty(\O_{\mu}).$$

{\bf Step 1.} $E\cap\p \{\psi_n<m_0\}\rightarrow E\cap\p \{\o<m_0\}$
locally in the Hausdorff distance in $\O_{\mu}$.

For any $X_0=(x_0,y_0)\notin E\cap\p\{\o<m_0\}$, the continuity of $\o$ gives that there exists a small $r>0$, such that $B_r(X_0)\cap\p\{\o<m_0\}=\varnothing$. We next claim that \be\label{c25}\text{$B_{\f
r{16}}(X_0)\cap\p\{\psi_n<m_0\}=\varnothing$ for sufficiently large
$n$.}\ee
If $\o<m_0$ in $B_r(X_0)$, \eqref{c24} implies that the claim \eqref{c25} is valid. If $\o=m_0$ in $B_r(X_0)$, for any small $\e>0$,  it follows from \eqref{c24} that there exists a $N_\e$ such that
$$\text{$|m_0-\psi_n(X)|<\e$ in $B_{\f r2}(X_0)$ for
$n>N_\e$},$$ which implies that
$$\f2r\left(\fint_{B_{\f r2}(X_0)}(m_0-\psi_n)^2dxdy\right)^{\f12}<\f{2\e}r\leq\lambda c^*\Ld x_0 \ \text{for $n>N_\e$}.$$ Thanks to Lemma \ref{lb5} for $\psi_n$, one has $\psi_n=m_0 \ \text{in} \ B_{\f r{16}}(X_0)$ for
sufficiently large $n$, and the claim \eqref{c25} holds.

On the other hand, for any $X_0=(x_0,y_0)\notin E\cap\p\{\psi_n<m_0\}$. Then $B_r(X_0)\cap\p\{\psi_n<m_0\}=\varnothing$ for small $r>0$, and we claim that \be\label{c26}B_{\f r2}(X_0)\cap\p\{\o<m_0\}=\varnothing.\ee

If $\psi_n<m_0$ in $B_r(X_0)$ for a subsequence
$\{\psi_n\}$, one has
$$\t Q_{\ld_n}\psi_n=0 \ \ \text{in}\ \ B_r(X_0),$$ which implies that
$$\t Q_\ld\o=0  \ \text{in} \
B_r(X_0),\ \o\leq m_0\ \text{in} \ B_r(X_0).$$

The strong maximum
principle yields that $\o=m_0$ or $\o<m_0$ in $B_r(X_0)$. Thus, the claim \eqref{c26} holds.

It is easy to check that the claim \eqref{c26} is valid, if $\psi_n=m_0$ in $B_r(X_0)$ for a subsequence
$\{\psi_n\}$.

 Hence, we
complete the proof of the convergence of the free boundary in the Hausdorff distance.

{\bf Step 2.} $\chi_{\{\psi_n<m_0\}\cap E}\rightarrow
\chi_{\{\o<m_0\}\cap E}$ locally in $L^1(\O_{\mu})$.

In view of \eqref{c24}, we can deduce that Lemma \ref{lb4} and Lemma \ref{lb5}
still hold for $\o$ by taking the limit $n\rightarrow\infty$.
By using Theorem 2.8 in \cite{ACF4},  one has
$$\mathcal{L}^2(E\cap\p\{\o<m_0\}\cap B_R)=0,  \ \text{for any $R>0$},$$ where $\mathcal{L}^2$ is the
Lebesgue measure in $\mathbb{R}^2$.

Applying the results in Step 1, there exists a sequence $\{\e_n\}$ with $\e_n\rightarrow0$, such that $$E\cap\{\psi_n<m_0\}\subset O_{\e_n},$$where $O_{\e_n}$ be an $\e_n$-neighborhood of $E\cap\p\{\o<m_0\}$. Then we have
$$\int_{\O_{\mu}\cap B_R}\left|\chi_{\{\psi_n<m_0\}\cap
E}-\chi_{\{\o<m_0\}\cap E}\right|dxdy\leq\int_{\O_{\mu}\cap
B_R\cap O_{\e_n}}dxdy\rightarrow 0,$$ for any $R>0$.

 {\bf Step 3.} $\nabla\psi_n\rightarrow \nabla\o$ a.e. locally in $\O_{\mu}$.

 Let $D$ be any compact subset of $\O_{\mu}\cap\{\o<m_0\}$. Then one has $$\t Q_{\ld_n}\psi_n=0\ \ \text{in}\ \ D\ \ \text{for sufficiently large}\ n.$$

The elliptic estimates for $\psi_n$ gives that
\be\label{c27}\nabla\psi_n\rightarrow\nabla\o \ \ \text{in}\ \ D.\ee

 Next, we claim that
\be\label{c28}\nabla\psi_n\rightarrow\nabla\o\ \ \text{a.e. in
$\O_\mu\cap\{\o=m_0\}$}.\ee

Since $\{\o=m_0\}$ is $\mathcal{L}^2$-measurable, it follows from
Corollary 3 in \cite{EV} that
$$\lim_{r\rightarrow 0}\f{\mathcal{L}^2(B_r(X)\cap\{\o=m_0\})}{\mathcal{L}^2(B_r(X))}=1\ \ \text{for $\mathcal{L}^2$ a.e. $X\in\{\o=m_0\}$}.$$
Denote $$\mathcal{S}=\left\{X\in\{\o=m_0\}\mid \lim_{r\rightarrow
0}\f{\mathcal{L}^2(B_r(X)\cap\{\o=m_0\})}{\mathcal{L}^2(B_r(X))}=1\right\}.$$
We next show that \be\label{c29}\text{$m_0-\o(X_0+X)=o(|X|)$ for
any $X_0\in\mathcal{S}$}.\ee In fact, if $m_0-\o(Y)>kr$ for some
$Y\in B_r(X_0)$ with $r\rightarrow 0$ and $k>0$. The Lipschitz continuity of $\o$ gives that
$$m_0-\o(X)>\f k2 r\ \ \text{in $B_{\e kr}(Y)$ for some small $\e>0$},$$
which implies that $\{\o<m_0\}$ has positive density at $X_0$, and then it
contradicts to $X_0\in\mathcal{S}$.

With the aid of \eqref{c24} and \eqref{c29}, for any $\e>0$, we
have
$$\f{m_0-\psi_n}{r}<\e \ \ \text{in $B_r(X_0)$ for small $r$},$$
provided that $n$ is sufficiently large, that is $n>N(\e,r)$. It
follows from the non-degeneracy Lemma \ref{lb5} that $\psi_n\equiv
m_0$ in $B_{\f r8}(X_0)$, which implies that $\o\equiv m_0$ in
$B_{\f r{10}}(X_0)$. Then we have that the set $\mathcal{S}$ is open.
Furthermore, one has
$$\text{$\psi_n\equiv \o$ in any compact subset of $\mathcal{S}$, provided that $n$ is sufficiently
large.}$$ This completes the proof of the claim \eqref{c28}.

Since $\mathcal{L}^2(E\cap\p\{\o<m_0\})=0$, it follows from
\eqref{c27} and \eqref{c28} that $\nabla\psi_n\rightarrow
\nabla\o$ a.e. in $\mathbb{R}^2$.

{\bf Step 4.} $\o=\psi$, where $\psi=\psi_{\ld,\mu}$ is the minimizer to the truncated
variational problem $(P_{\ld,\mu})$.

First, we will show that \be\label{c30}J_{R}(\o)\leq
J_{R}(\phi)\ \ \text{for any $\phi\in K_\mu$ and $\phi=\o$ on $\p\O_R$},\ee where
$J_{R}(\phi)=\int_{\O_R}G(\g\phi,\phi,x;\ld)dxdy$ and $\O_R=\O_{\mu}\cap B_R(0)$ for sufficiently large $R>0$.

 For any $\phi\in K_\mu$ and $\phi=\o$ on $\p\O_R$, set
$$\phi_n=\phi+(1-\xi_\delta)(\psi_n-\o),$$ where $\xi_\delta(X)=\min\left\{\f{\text{dist}(X, \mathbb{R}^2\setminus \O_R)}{\delta},1\right\}$. Obviously, $\phi_n=\psi_n$ on $\p\O_R$ and extend $\phi_n=\psi_n$ outside $\O_R$, such that $\phi_n\in K_\mu$. Then one has
$$\int_{\O_R}G(\g\psi_n,\psi_n,x;\ld_n)dxdy
\leq\int_{\O_R}G(\g\phi_n,\phi_n,x;\ld_n)dxdy.$$

 By using the convergence results in
Step 2 and Step 3, taking $n\rightarrow\infty$ in the above inequality, we have
\be\label{c31}\ba{rl}\int_{\O_R}G(\g\o,\o,x;\ld)dxdy
\leq&\int_{\O_R\cap\{\xi_\delta=1\}}G(\g\phi,\phi,x;\ld)dxdy\\
&+\int_{\O_R\cap\{\xi_\delta<1\}}G(\g\phi,\phi,x;\ld)dxdy.\ea\ee
Taking $\delta\rightarrow 0$ in \eqref{c31} yields that   \be\label{c32}\ba{rl}\int_{\O_R}G(\g\o,\o,x;\ld)dxdy
\leq\int_{\O_R}G(\g\phi,\phi,x;\ld)dxdy.\ea\ee  Hence, \eqref{c30} can be obtained by taking $R\rightarrow+\infty$ in \eqref{c32}.

It follows from
\eqref{c16} that
$$\int_{\O_\mu\cap\{x\geq
x_0\}}G(\g\o,\o,x;\ld)dxdy \leq \f{C}{x_0^3},$$ for sufficiently large $x_0>0$, which implies that the results (2) in Lemma
\ref{lc4} still be valid for $\o$.

 For any $\e>0$, Set $\psi_\e(x,y)=\psi(x,y-\e)$ and $\O_{\mu}^\e=\{(x,y)\mid(x, y-\e)\in \O_\mu\}$.
Extend $\o(x,y)=\f{m_0}{x_\mu}x^2$ in $\{(x,y)\mid 0\leq x\leq x_\mu, \mu\leq y\leq \mu+\e\}$. It is easy to check that $\min\{\psi_\e,\o\}\in K_\mu^\e$ and $\max\{\psi_\e,\o\}\in K_\mu$. Therefore, one has
\be\label{c33}J^\e_{\ld,\mu}(\psi_\e)\leq J^\e_{\ld,\mu}(\min\{\psi_\e,\o\}).\ee
Similar to the proof of Theorem 4.1 in \cite{CD}, we can check that  \be\label{c34}J_{\ld,\mu}(\o)+J^\e_{\ld,\mu}(\psi_\e)= J_{\ld,\mu}(\max\{\psi_\e,\o\})+J^\e_{\ld,\mu}(\min\{\psi_\e,\o\}).\ee

With the aid of the asymptotic behaviors of $\psi$ and $\o$ in Lemma \ref{lc4}, we have
$$\psi_\e(x,y)\leq\o(x,y)\ \ \text{in}\ \ \O_\mu\setminus B_R\ \ \text{for any sufficiently large $R>0$}.$$ Thus $\max\{\psi_\e,\o\}=\o$ on $\p\O_R$, it follows from \eqref{c30} that $$J_R(\o)\leq J_R(\max\{\psi_\e,\o\}).$$ Taking $R\rightarrow+\infty$ in the above inequality yields that $$J_{\ld,\mu}(\o)\leq J_{\ld,\mu}(\max\{\psi_\e,\o\}),$$ which together with \eqref{c33} and \eqref{c34} gives that
$$J^\e_{\ld,\mu}(\psi_\e)=J^\e_{\ld,\mu}(\min\{\psi_\e,\o\}).$$ Since the minimizer $\psi_\e$ to the variational problem $(P_{\ld,\mu}^\e)$ is unique, one has
\be\label{c35}\psi(x,y-\e)=\psi_\e(x,y)=\min\{\psi_\e,\o\}\leq \o(x,y)\ \ \text{in}\ \ \O_\mu.\ee Similarly, we can show that \be\label{c36}\psi(x,y+\e)\geq \o(x,y)\ \ \text{in}\ \ \O_\mu.\ee Taking $\e\rightarrow 0$ in \eqref{c35} and \eqref{c36}, we have that $\psi=\o$ in $\O_\mu$.

\end{proof}

Next, we will obtain the continuous dependence of the free
boundary $\Gamma_{\ld,\mu}$ with respect to $\ld$. We remark that
even though the ideas of the proof borrow from the one
for incompressible jet as done in Theorem 3.1 in \cite{ACF3} and Theorem 6.1 in Chapter 3 in \cite{FA1}, due to the difference of the governing equations and the functional, we have to overcome several additional difficulties. Actually, the stream function satisfies the linear elliptic equation for the incompressible flows, and here we have to deal with a quasilinear elliptic equation. %and overcome some additional difficulties in the following.
%For convenience to the readers, we would like to give the details as follows.%even though the lemma is similar as Lemma 5.1 in \cite{ACF5}, the proof is different due to the different functional $J_{\lambda,\mu}$.%$\bar{G}(\nabla\psi,\psi,\lambda)$.

\begin{lem}\label{lc7}
The free boundary $y=k_{\ld_n,\mu}(x)$ of the minimizer
$\psi_{\ld_n,\mu}$ with $\ld_n\leq\Pi_{\ld_n}-4\t\e$ satisfies that
$$k_{\ld_n,\mu}(x)\rightarrow k_{\ld,\mu}(x) \ \ \text{for any $x\in(b,+\infty)$},$$
as $\lambda_n\rightarrow\ld$, where $y=k_{\ld,\mu}(x)$ is the free
boundary of the minimizer $\psi_{\ld,\mu}$.
\end{lem}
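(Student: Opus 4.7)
The plan is to prove pointwise convergence $k_{\ld_n,\mu}(x_0)\to k_{\ld,\mu}(x_0)$ at each fixed $x_0\in(b,+\infty)$ by establishing separately $\limsup k_{\ld_n,\mu}(x_0)\leq k_{\ld,\mu}(x_0)$ and $\liminf k_{\ld_n,\mu}(x_0)\geq k_{\ld,\mu}(x_0)$, using the convergence of minimizers already obtained in Lemma \ref{lc6}, the continuity of the limiting graph from Lemma \ref{lc3}, and the non-degeneracy Lemma \ref{lb5}. The first step is to upgrade the weak $H^1_{loc}$ convergence and a.e. convergence $\psi_{\ld_n,\mu}\to\psi_{\ld,\mu}$ from Lemma \ref{lc6} to locally uniform convergence on compact subsets of $\O_\mu$, which follows from the uniform local Lipschitz bound of Lemma \ref{lb3} combined with the Arzel\`a--Ascoli theorem.

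Set $y_0=k_{\ld,\mu}(x_0)$. For the \emph{liminf}, I would pick any $y<y_0$; by the monotonicity of $\psi_{\ld,\mu}$ in $y$ (Lemma \ref{lc1}) together with Lemma \ref{lc3}, we have $\psi_{\ld,\mu}(x_0,y)<m_0$ strictly. Locally uniform convergence then forces $\psi_{\ld_n,\mu}(x_0,y)<m_0$ for all large $n$, so by the monotonicity of $\psi_{\ld_n,\mu}$ in $y$ one obtains $y<k_{\ld_n,\mu}(x_0)$. Sending $y\nearrow y_0$ yields $\liminf_{n\to\infty} k_{\ld_n,\mu}(x_0)\geq y_0$.

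For the \emph{limsup}, I would pick any $y>y_0$; the continuity of $k_{\ld,\mu}$ from Lemma \ref{lc3} allows me to choose $r<x_0/2$ so small that the disc $B_r(x_0,y)$ lies inside $E\cap\{\psi_{\ld,\mu}=m_0\}$. Locally uniform convergence then gives $\psi_{\ld_n,\mu}\to m_0$ uniformly on $B_r(x_0,y)$, and consequently
\[
\f{1}{r}\left(\fint_{B_r(x_0,y)}(m_0-\psi_{\ld_n,\mu})^2\,dxdy\right)^{1/2}\longrightarrow 0\qquad\text{as }n\to\infty.
\]
Because $\Ld(\ld_n^2)\to\Ld(\ld^2)>0$ by \eqref{bb0}, for all large $n$ this quantity is bounded by $c^*\Ld(\ld_n^2)x_0$; the non-degeneracy Lemma \ref{lb5} then forces $\psi_{\ld_n,\mu}\equiv m_0$ on $B_{r/8}(x_0,y)\cap E$, so $y\geq k_{\ld_n,\mu}(x_0)$. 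Sending $y\searrow y_0$ gives $\limsup_{n\to\infty} k_{\ld_n,\mu}(x_0)\leq y_0$, which combined with the previous bound yields the lemma.

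The main obstacle is the limsup inequality, since it requires applying non-degeneracy to a sequence of minimizers rather than a single one; the key point is that the constant $c^*$ in Lemma \ref{lb5} is universal and the non-degeneracy threshold $c^*\Ld(\ld_n^2)x_0$ stays uniformly bounded away from zero thanks to the monotonicity of $\Ld(\ld^2)$ in \eqref{bb0}. Once this uniformity is secured, the Lipschitz bound from Lemma \ref{lb3} and the continuity of $k_{\ld,\mu}$ from Lemma \ref{lc3} make the rest of the argument routine.
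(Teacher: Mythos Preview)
Your argument is correct and coincides with the paper's treatment of the interior points: for $x\in(b,+\infty)$ the paper disposes of the claim in one sentence by invoking Step~1 of Lemma~\ref{lc6} (Hausdorff convergence of the free boundaries via locally uniform convergence together with the non-degeneracy Lemma~\ref{lb5}), and your liminf/limsup argument is exactly that step unwound for graph functions. It is worth noting, though, that the bulk of the paper's proof of Lemma~\ref{lc7} is actually devoted to the endpoint $x=b$, which is not part of the stated conclusion but is needed later in Proposition~\ref{lc8} for the continuous fit condition; that case requires a genuinely different mechanism (Cauchy--Kovalevskaya unique continuation from a flat piece of free boundary, plus the non-oscillation Lemma~\ref{lc2}), which your proposal does not---and was not asked to---address.
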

\begin{proof}
For any fixed $x\in(b,+\infty)$, the convergence of
$k_{\ld_n,\mu}(x)$ can be obtained by using the Step 1 in the proof of Lemma \ref{lc6}.

Next, we will consider the initial point of the free boundary and
show that $k_{\ld_n,\mu}(b)\rightarrow k_{\ld,\mu}(b)$ as
$\lambda_n\rightarrow\ld$. Suppose not, then there exists a
subsequence $\{k_{\ld_n,\mu}(b)\}$, such that
$k_{\ld_n,\mu}(b)\rightarrow k_{\ld,\mu}(b)+\beta$ and $\beta\neq
0$. We will show that it is impossible case by case based on the
sign of $\beta$.

{\bf Case 1.} $\beta<0$. The monotonicity of $\psi_{\ld,\mu}(x,y)$
with respect to $y$ gives that $k_{\ld,\mu}(b)+\beta\geq 1$. In
fact, if $k_{\ld,\mu}(b)+\beta<1$, it follows from Lemma \ref{lc6}
that $$\psi_{\ld,\mu}(b,y)=m_0\ \ \ \text{for}\ \ \
k_{\ld,\mu}(b)+\beta\leq y\leq\min\{k_{\ld,\mu}(b),1\},$$ which
contradicts to the fact $\psi_{\ld,\mu}<m_0$ for
$y<\min\{k_{\ld,\mu}(b),1\}$, due to the fact that
$\psi_{\ld,\mu}(x,y)$ is monotone increasing with respect to $y$.

 Denote $$T_\beta=\left\{(b,y)\mid k_{\ld,\mu}(b)+\f{3\beta}4<y<k_{\ld,\mu}(b)+\f\beta4\right\}.$$ Next, we claim that
\be\label{c37}\f{\p\psi_{\ld,\mu}(b+0,y)}{\p x}=-\lambda\ \
\text{on}\ \ T_\beta.\ee

For small $\e>0$, set $$U_\e=\left\{(x,y)\mid b-\e<x<b+\e,k_{\ld,\mu}(b)+\f{3\beta}4<y<k_{\ld,\mu}(b)+\f\beta4\right\}.$$ It is easy to check that
$$T_\beta=U_\e\cap\p\{\psi_{\ld,\mu}<m_0\}.$$
Set $\phi_n=m_0-\psi_{\ld_n,\mu}$ and $\phi=m_0-\psi_{\ld,\mu}$,
then one has
$$\left\{\begin{array}{ll}&\t Q_{\ld_n}\phi_n=0\ \ \text{in}\ U_\e\cap\{\phi_n>0\},\\
&\f1x\f{\p\phi_n}{\p\nu_n}=\lambda_n\ \text{on}\
U_\e\cap\p\{\phi_n>0\},\end{array}\right.$$
 %and $U\cap\p\{\psi_{\ld_n,\mu}<m_0\}$ is in $C^{1,\alpha}$ for $\alpha\in(0,1)$,
 where $\nu_n$ is the inner normal vector to $U_\e\cap\p\{\phi_n>0\}$. Now, in order to show the claim \eqref{c37}, it suffices to check that $\phi$ satisfies the following boundary value problem,
\be\label{c38}\left\{\begin{array}{ll}&\t Q_\ld\phi=0\ \ \text{in}\ U_\e\cap\{\phi>0\},\\
&\f1x\f{\p\phi}{\p \nu}=\lambda\ \text{on}\
U_\e\cap\p\{\phi>0\},\end{array}\right.\ee where $\nu=(1,0)$ is the inner normal vector to
$\p\{\phi>0\}$ at $T_\beta$.

We divide the proof into two steps to show that \eqref{c38} holds.

{\bf Step 1.}  In this step, we will verify that
\be\label{c39}\f1x\f{\p\phi}{\p \nu}\geq\ld\ \ \text{on}\ \
U_\e\cap\p\{\phi>0\}.\ee

It follows from \eqref{c24} that for
$\alpha\in(0,1)$, $$\phi_n\rightarrow\phi \ \ \text{uniformly
in $C^\alpha(U_\e)$},$$ and $U_\e\cap\{\phi_n>0\}\rightarrow
U_\e\cap\{\phi>0\}$ in the Hausdorff metric space, and
$$\g\phi_n\rightarrow\g\phi \ \ \text{weakly in $L^2(U_\e)$},$$ as
$n\rightarrow+\infty$. Moreover, by virtue of the bounded gradient Lemma \ref{lb3} and the Step 4 in the proof of Lemma \ref{lc6}, one has
\be\label{cj40}|\g\phi_n|\leq C\Lambda x\ \ \text{in $U_\e$ and}\ \ \g\phi_n\rightarrow\g\phi \ \ \text{a.e. in $U_\e$, as $n\rightarrow\infty$},\ee where the constant $C$ is independent of $n$.

Since $F_1(t;\ld)=\f{1}{\t\rho(t;\ld^2)}$ is $C^{1,\alpha}$, it follows from \eqref{cj40} that
\be\label{c40}\ba{rl}&\lim_{n\rightarrow+\infty}F_1(\ld_n^2;\ld_n)\ld_n\int_{U_\e\cap\p\{\phi_n>0\}}\xi
dS\\
=&-\lim_{n\rightarrow+\infty}\int_{U_\e\cap\{\phi_n>0\}}\f1x F_1\left(\left|\f{\g\phi_n}{x}\right|^2;\ld_n\right)\g\phi_n\cdot\g\xi
dxdy\\
=&-\int_{U_\e\cap\{\phi>0\}}\f1x F_1\left(\left|\f{\g\phi}{x}\right|^2;\ld\right)\g\phi\cdot\g\xi
dxdy\\
=&\int_{U_\e\cap\p\{\phi>0\}}\f1x F_1\left(\left|\f{\g\phi}{x}\right|^2;\ld\right)\f{\p\phi}{\p \nu}\xi
dS,\ea\ee  for any non-negative $\xi\in C_0^\infty(U_\e)$.

On other hand, it follows from (3.6) in Chapter 3 in \cite{FA1} that  \be\label{c41}\ba{rl}\int_{U_\e\cap\p\{\phi>0\}}\xi
dS\leq\liminf_{n\rightarrow+\infty}\int_{U_\e\cap\p\{\phi_n>0\}}\xi
dS.\ea\ee

In view of \eqref{c40} and \eqref{c41}, one has
\be\label{c42}\ba{rl}F_1(\ld^2;\ld)\ld\int_{U_\e\cap\p\{\phi>0\}}\xi
dS\leq\int_{U_\e\cap\p\{\phi>0\}}\f1x F_1\left(\left|\f{\g\phi}{x}\right|^2;\ld\right)\f{\p\phi}{\p \nu}\xi
dS,\ea\ee for any non-negative $\xi\in C_0^\infty(U)$.
Since $\phi$ is $C^{1,\alpha}$ up to the boundary $\p\{\phi>0\}$, it follows from \eqref{c42} that
\be\label{c43} F_1\left(\left|\f{\g\phi}{x}\right|^2;\ld\right)\left|\f{\g\phi}{x}\right|= F_1\left(\left|\f{\g\phi}{x}\right|^2;\ld\right)\f1x\f{\p\phi}{\p
\nu}\geq  F_1\left(\ld^2;\ld\right)\ld\ \ \text{on}\ \ U_\e\cap\p\{\phi>0\}.\ee Since
$F_1(t^2;\ld)t$ is increasing with respect to $t$,
\eqref{c43} gives that \eqref{c39} holds.

{\bf Step 2. } In this step, we will check that
\be\label{c44}\f1x\f{\p\phi}{\p \nu}\leq\ld\ \ \text{on}\ \
U_\e\cap\p\{\phi>0\}.\ee

By virtue of the non-oscillation Lemma \eqref{lc2} and the
flatness of the free boundary in Section 5 in \cite{ACF4}, we have
that $$\text{the free boundary $\Gamma_{\ld_n,\mu}$ is a $y$-graph in
$U_\e$ for sufficiently large $n$,}$$ where we denote
$\Gamma_{\ld_n,\mu}: x=f_{\ld_n,\mu}(y)$ in the region $U_\e$.
%$\left\{k_{\ld,\mu}(0)+\f{3\beta}4<y<k_{\ld,\mu}(0)+\f\beta4\right\}$,
It follows from the result in Theorem 6.3 and Remark 6.4 in
\cite{ACF4} that
 $$\left|f_{\ld_n,\mu}^{(j)}(y)\right|\leq C\ \ \text{for}\ \ k_{\ld,\mu}(b)+\f{3\beta}4<y<k_{\ld,\mu}(b)+\f\beta4,\ \ j=1,2.$$
Then we have
$$\text{$U_\e\cap\p\{\phi_n>0\}\rightarrow U_\e\cap\p\{\phi>0\}$  in
$C^{1,\alpha}$ for some $\alpha\in(0,1)$.}$$

For any fixed $X_0=(x_0,y_0)\in U_\e\cap\p\{\phi>0\}$, it follows from
Lemma \ref{lc6} that there exists a sequence $X_n\in U_\e$ with
$\phi_n(X_n)=0$, such that $X_n\rightarrow X_0$ as
$n\rightarrow+\infty$. Take a small $r>0$ and domain $E_n\subset
B_r(X_0)\subset U_\e$, such that \be\label{c45}B_r(X_0)\cap \p
E_n\rightarrow B_r(X_0)\cap \p\{\phi>0\} \ \text{in}\ C^{1,\gamma},\
\phi_n>0\ \text{in}\ E_n\ \text{and}\ X_n\notin E_n,\ee for
$0<\gamma<\alpha$. We can take a sequence $\{\e_n\}$ with $\e_n\downarrow 0$, such that
$$E_n=B_r(X_0)\cap\{x>b+\e_n\}.$$  Define a function $f_{\delta,n}(y)$ as
follows
$$f_{\delta,n}(y)=b+\e_n-\delta\eta\left(\f{2(y-y_0)}{r}\right),\ \ \delta>0,$$
where $$ \eta(y)=\left\{\begin{array}{ll} e^{-\f{y^2}{1-y^2}}\ \ \
&\text{for}\
\ |y|<1,\\
0 &\text{for}\ \ |y|\geq1.
\end{array}\right.$$ Denote the domain
$E_{\delta,n}=B_r(X_0)\cap\{x>f_{\delta,n}(y)\}$. It is easy to
check that $E_{0,n}=E_n$. By virtue of the definitions of $E_n$ and
$E_{\delta,n}$, there exists the largest number $\delta=\delta_n$,
such that $\phi_n>0$ in $E_{\delta_n,n}$, and $B_r(X_0)\cap\p
E_{\delta_n,n}$ contains a point of the free boundary of $\phi_n$,
which is denoted as $\t X_n=(\t x_n,\t y_n)$ with $\t
x_n=f_{\delta_n,n}(\t y_n)$. Furthermore,
$$\delta_n\rightarrow0 \ \ \text{as}\ \ n\rightarrow+\infty.$$

Let $\o_n$ be the solution of the following Dirichlet problem
$$\left\{\begin{array}{ll}&\t Q_{\ld_n}\o_n=0\ \ \text{in}\ E_{\delta_n,n},\\
&\o_n=0\ \text{on}\ \p E_{\delta_n,n}\cap B_{\f r2}(X_0),\
\o_n=\zeta\phi_n\ \text{on}\ \p E_{\delta_n,n}\cap(
B_r(X_0)\setminus B_{\f r2}(X_0)),\\
&\o_n=\phi_n\ \text{on}\ \p E_{\delta_n,n}\cap \p B_r(X_0),
\end{array}\right.$$ where $\zeta(X)=\min\left\{\max\left\{\f{2|X-X_0|-r}{r},0\right\},1\right\}$.
 Since $\phi_n$ satisfies the quasilinear equation $\t Q_{\ld_n}\phi_n=0$
 in $E_{\delta_n,n}$ and $\o_n\leq\phi_n$
on $\p E_{\delta_n,n}$, the maximum principle implies that
$\o_n\leq\phi_n$ in $E_{\delta_n,n}$. Hence, one has
 \be\label{c46}\ld_n=\f1{\t x_n}\f{\p\phi_n(\t X_n)}{\p\nu_n}\geq\f1{\t x_n}
\f{\p\o_n(\t X_n)}{\p\nu_n},\ee where $\nu_n$ is the inner normal
vector to $\p E_{\delta_n,n}$ at $\t X_n$.

It follows from the fact \eqref{c45} that
$$f_{\delta_n,n}(y)\rightarrow b\ \ \text{in}\ \
C^{1,\gamma}.$$ Thanks to the standard estimates of the solutions of
the elliptic equation of second order, we conclude that $\o_n$ in
$E_{\delta_n,n}\cap B_{\f r2}(X_0)$ converges to $\phi$ in
$\{\phi>0\}\cap B_{\f r2}(X_0)$ in $C^{1,\gamma}$-sense. Suppose that
$$\t X_n\rightarrow \t X=(\t x,\t y)\in\p\{\phi>0\}\cap B_{\f r2}(X_0).$$ This
together with \eqref{c46} gives that $$\f1{\t x_n}\f{\p\o_n(\t
X_n)}{\p\nu}\rightarrow\f1{\t x}\f{\p\phi(\t X)}{\p\nu} \ \ \text{and}\ \
\f1{\t x}\f{\p\phi(\t X)}{\p\nu}\leq \ld.$$ Taking $r\rightarrow 0$, we have $\t
X\rightarrow X_0$ and $$\ld\geq\f1{\t x}\f{\p\phi(\t
X)}{\p\nu}\rightarrow\f1{x_0}\f{\p\phi( X_0)}{\p\nu},\ \ X_0\in
U_\e\cap\p\{\phi>0\},$$ which gives the inequality \eqref{c44} holds.

Therefore, the claim \eqref{c38} follows from \eqref{c39} and \eqref{c44}.

With the aid of the claim \eqref{c37}, $\psi_{\ld,\mu}$ satisfies
$$\t Q_\ld \psi_{\ld,\mu}=0\ \ \text{in}\ E_\e,\ \ \f1x\f{\p\psi_{\ld,\mu}}{\p x}=-\ld\ \ \text{and}\ \ \psi_{\ld,\mu}=m_0\ \ \text{on}\ \p
E_\e\cap\{x=b\},$$ for small $\e>0$, where $E_\e=\left\{(x,y)\mid
b<x<b+\e,
k_{\ld,\mu}(b)+\f{3\beta}4<y<k_{\ld,\mu}(b)+\f\beta4\right\}$.

 It
follows from the Cauchy-Kovalevskaya theorem that %and unique continuation that
$$\psi_{\ld,\mu}=-\lambda (x^2-b^2)+m_0\ \text{in}\ \left\{(x,y)\mid
b<x<b+\e,g_0(b)<y<+\infty\right\}\cap \O_{\mu}.
$$
This contradicts to the fact $\psi_{\ld,\mu}=0$ on $N_0$.

{\bf Case 2.} $\beta>0$ and $k_{\ld,\mu}(b)< 1$. By using the
similar arguments in Case 1, we can conclude that
$$\f1b\f{\p\psi_{\ld,\mu}(b-0,y)}{\p x}=\lambda\ \ \text{if}\ \
k_{\ld,\mu}(b)+\f{\beta}4<y<\min\left\{k_{\ld,\mu}(b)+\f{3\beta}4,1\right\}.$$
We can obtain a contradiction by using Cauchy-Kovalevskaya theorem as
in Case 1 again.

{\bf Case 3.} $\beta>0$ and $k_{\ld,\mu}(b)\geq 1$.
 It follows from the arguments in Lemma \ref{lb2} that
$$\f1b\f{\p\psi_{\ld_n,\mu}(b+0,y)}{\p x}\leq-\lambda_n\ \ \text{on}\ \
\left\{x=b,
k_{\ld,\mu}(b)+\f{\beta}4<y<k_{\ld,\mu}(b)+\f{3\beta}4\right\},$$
for sufficiently large $n$. Let $D_n$ be bounded by $x=b$,
$y=k_{\ld_n,\mu}(x)$, $y=k_{\ld,\mu}(b)+\f{\beta}4$ and
$y=k_{\ld,\mu}(b)+\f{3\beta}4$. Furthermore, we have
$$x_n=\min\left\{x\mid
k_{\ld_n,\mu}(x)=k_{\ld,\mu}(b)+\f{3\beta}4\right\}\rightarrow b\ \
\text{as}\ \ n\rightarrow+\infty.$$ Thanks to the non-oscillation
Lemma 4.4 in \cite{ACF5} for $\psi_{\ld_n,\mu}$ in $D_n$, one has
$$\f\beta2\leq C(x_n-b),\ \  \text{the constant $C$ is independent of $n$},$$
which leads a contradiction for sufficiently large $n$.

\end{proof}

\subsection{The continuous fit condition of the free boundary $\Gamma_{\ld,\mu}$}

In the following, we will consider the continuous fit condition of the free boundary at $A$.

\begin{prop}\label{lc8} For any $\mu>1$ and small $m_0>0$, there exists a $\ld_\mu\leq\Pi_{\ld_\mu}-4\t\e$, such that the free boundary $\Gamma_{\ld_\mu,\mu}$ satisfies the continuous fit condition at $A$, namely, $k_{\ld_\mu,\mu}(b)=1$. Moreover,
$$\ld_\mu\leq C_0m_0,\ \ \text{the constant $C_0$ is independent of $\mu$ and $m_0$}.$$
\end{prop}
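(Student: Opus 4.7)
The plan is a shooting argument in the momentum parameter $\ld$, in the spirit of the continuous fit analysis for free jets in \cite{ACF3,ACF5,CDW,FA1}. Define
\begin{equation*}
  \Sigma_\mu := \{\ld \in (0, \Pi_\ld - 4\t\e] : k_{\ld,\mu}(b) < 1\}, \qquad \ld_\mu := \inf \Sigma_\mu.
\end{equation*}
Once $\Sigma_\mu$ is shown to be non-empty with $\ld_\mu > 0$, the continuous dependence Lemma \ref{lc7} (in particular the convergence $k_{\ld_n,\mu}(b) \to k_{\ld,\mu}(b)$ as $\ld_n \to \ld$) together with the graph property from Lemma \ref{lc3} and the definition of $\ld_\mu$ as an infimum will force $k_{\ld_\mu,\mu}(b) = 1$.

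The non-triviality $\ld_\mu > 0$ follows by showing $k_{\ld,\mu}(b) > 1$ for all sufficiently small $\ld$. The downstream asymptotic $x(k_{\ld,\mu}(x) - g_0(x)) \to m_0/(\ld\cos\theta)$ from Lemma \ref{lc4} forces the free surface to sit at height of order $m_0/(\ld\cos\theta) \to \infty$ as $\ld \downarrow 0$; propagating this height back to $x = b$ via the non-oscillation Lemma \ref{lc2} along a long horizontal strip yields $k_{\ld,\mu}(b) \gg 1$ when $\ld \ll m_0$. For the quantitative bound $\ld_\mu \leq C_0 m_0$ with $C_0 = C_0(b)$ independent of $\mu$ and $m_0$, I would show that $\ld > C_0 m_0$ forces $k_{\ld,\mu}(b) < 1$. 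Otherwise, the fluid region reaches the artificial boundary $T$ on a segment $l$ of positive length, and Lemma \ref{lb2} gives $|\g\psi_{\ld,\mu}|/x \geq \ld$ on $l$, i.e., $\psi_{\ld,\mu,x}(b, y_0) \geq \ld b$ for $y_0 \in l$; an interior barrier comparison (using $\t Q_\ld \psi_{\ld,\mu} = 0$ and the Lipschitz bound from Lemma \ref{lc5}) then propagates this into $\psi_{\ld,\mu,x}(x, y_0) \gtrsim \ld x$ across the slice, and the mass-flux identity $m_0 = \int_0^b \psi_{\ld,\mu,x}(x, y_0)\, dx$ yields $m_0 \gtrsim \ld b^2$, contradicting $\ld > C_0 m_0$ once $C_0$ is chosen larger than the implicit constant. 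The smallness of $m_0$ then guarantees $\ld_\mu \leq C_0 m_0 \leq \Pi_{\ld_\mu} - 4\t\e$.

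The main obstacle is the barrier comparison producing the interior lower bound $\psi_{\ld,\mu,x}(x, y_0) \gtrsim \ld x$ from the boundary inequality $|\g\psi_{\ld,\mu}|/x \geq \ld$ on $l$. This is delicate near the singular corner $A = (b,1)$ where $N$, $T$ and $l$ meet, and the comparison must yield a constant independent of $\mu$ (essential for the later passage $\mu \to \infty$). The quasilinear structure of $\t Q_\ld$ must be handled with care, and the geometric condition that $g$ is decreasing at $b^-$, so that the nozzle opens away from $T$, together with the monotonicity of $\psi_{\ld,\mu}$ in $y$ from Lemma \ref{lc1}, is what ultimately allows one to construct the required comparison function and close the estimate.
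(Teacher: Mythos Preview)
Your shooting framework matches the paper's: define the parameter set where $k_{\ld,\mu}(b)$ lies on one side of $1$, take the extreme $\ld_\mu$, and use the continuous dependence Lemma \ref{lc7} to conclude $k_{\ld_\mu,\mu}(b)=1$. (The paper takes $\Sigma_\mu=\{\ld : k_{\ld,\mu}(b)>1\}$ and $\ld_\mu=\sup\Sigma_\mu$, but this is equivalent to your $\inf$ formulation.)

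However, the two key estimates are obtained in the paper by much more direct means, and your argument for the upper bound $\ld_\mu\leq C_0 m_0$ has a genuine gap.

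For the lower bound (small $\ld$ forces $k_{\ld,\mu}(b)>1$): the paper does not go through the downstream asymptotics and non-oscillation. If $k_{\ld,\mu}(b)\leq 1$, one takes an annulus $S$ of fixed radii centered at $(b,k_{\ld,\mu}(b))$ meeting both $\Gamma_{\ld,\mu}$ and $N_0$. The bounded gradient Lemma \ref{lb3} gives $|\nabla\psi_{\ld,\mu}|\leq C\Lambda$ in $S\cap\{x>b\}$, and integrating along a circular arc from the free boundary to $N_0$ yields $m_0\leq C\Lambda\leq C\ld$ via \eqref{b7}, impossible for $\ld$ small. Your route is plausible, but the constant in Lemma \ref{lc2} depends on $\Ld$, which degenerates as $\ld\downarrow 0$, so the propagation back to $x=b$ is not obviously uniform in $\ld$.

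For the upper bound $\ld_\mu\leq C_0 m_0$: the paper never uses the inequality $|\nabla\psi|/x\geq\ld$ on $l\subset T$. If $k_{\ld,\mu}(b)\geq 1$, one places a disc $B_R(X_0)\subset E$ of fixed radius (depending only on $b$ and the gap $1-g_0(b)>0$) with $B_{R/8}(X_0)\cap\Gamma_{\ld,\mu}\neq\varnothing$. The non-degeneracy Lemma \ref{lb5} then gives
\[
\frac{m_0}{R}\;\geq\;\frac{1}{R}\Bigl(\fint_{B_R(X_0)}(m_0-\psi_{\ld,\mu})^2\,dxdy\Bigr)^{1/2}\;\geq\; c^*\Ld\, x_0\;\geq\; c\,\ld
\]
using \eqref{b13}, i.e.\ $m_0\geq C\ld$ with $C$ independent of $\mu$ and $m_0$. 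Contrapositively, $\ld>C_0 m_0$ forces $k_{\ld,\mu}(b)<1$.

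Your proposed route via the mass-flux identity $m_0=\int_0^b\psi_x(x,y_0)\,dx$ at a height $y_0>1$ hits a geometric obstruction: for $x<b$ near $b$, the point $(x,y_0)$ lies \emph{above} the nozzle $N$ (since $g(b)=1<y_0$ and $g$ is continuous), where $\psi\equiv m_0$ and $\psi_x=0$. Thus there is no ``interior'' immediately to the left of $l$ into which to propagate the bound $|\nabla\psi|/x\geq\ld$; that inequality is a one-sided derivative from $E=\{x>b\}$, while the mass-flux lives on cross-sections inside the nozzle, not through $l$. Moreover, you invoke ``the geometric condition that $g$ is decreasing at $b^-$'', but the hypotheses \eqref{a06} only assume $g$ decreasing on $(a,a+\e_0)$, not near $b$. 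So even granting the unspecified barrier step, your argument rests on an assumption the paper does not make. Replace it with the non-degeneracy argument above.
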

\begin{proof} {\bf Step 1.} In this step, we will show that $$\text{for $m_0>0$, if $\ld>0$ is sufficiently small, then $k_{\ld,\mu}(b)>1$.}$$  Suppose that there exists a small $\ld_0$, such that $k_{\ld_0,\mu}(b)\leq 1$. Let $S$ be a ring
centered at $P=(b,k_{\ld_0,\mu}(b))$ with some suitable radius
$R_1$ and $R_2$ which are independent of $m_0$ and $R_1<R_2$, such
that $\G_{\ld_0,\mu}\cap S\cap\{x>b\}$ and $N_0\cap \bar S\cap\{x>b\}$ are
nonempty.

It follows from Lemma \ref{lb3} that there exists a constant $C_0$ depending on $N, N_0, \vartheta$
(independent of $m_0$), such that
$$\left|D\psi_{\ld_0,\mu}\right|\leq C_0\Lambda(\ld_0)~~\text{in}~~~S\cap\{x>b\},$$
where $\Lambda(\ld_0)=\sqrt{2F_1(\ld_0^2;\ld_0)\ld_0^2-F(\ld_0^2;\ld_0)}$.

Choosing $X_1\in \G_{\ld_0,\mu}\cap \bar S\cap\{x>b\}$, $X_2\in
N_0\cap \bar S\cap\{x>b\}$ with $|X_1-P|=|X_2-P|$, shows that
\be\label{c47}m_0=\int_\gamma\left|\f{\p\psi_{\ld_0}}{\p
s}\right|dS\leq|D\psi_{\ld,\mu}|2\pi R_2\leq C\Lambda(\ld^2_0),\ee
where $\gamma\subset S$ is disc curve which connects $X_1$ and
$X_2$, $s$ is the unit tangent vector of $\gamma$. On another hand,
it follows from \eqref{b7} that
$$\Ld(\ld_0)\leq\ld_0\sqrt{\f{1}{\rho_0}\left(2-\left(\f{2}{\gamma+1}\right)^{\f1{\gamma-1}}\right)},$$ which together with \eqref{c47} gives that $$0<m_0\leq C\ld_0.$$
This is impossible for sufficiently small $\ld_0$.

{\bf Step 2.} For $\ld=\f{\ld_{cr}}2$, we will show that $$\text{if $m_0>0$ is sufficiently small, then $k_{\ld,\mu}(b)<1$. }$$ Suppose that $k_{\ld,\mu}(b)\geq 1$ for some small $m_0>0$, then there exists a disc $B_R(X_0)\subset\O_\mu$ ($R$ is fixed), such that
$X_0\in E$ and $B_{\f R8}(X_0)\cap\Gamma_{\ld,\mu}\neq\varnothing$.
According to the non-degeneracy Lemma \ref{lb5}, we have
$$\f{m_0}{R}\geq\f{1}{R}\left(\fint_{B_R(X_0)}(m_0-\psi_{\ld,\mu})^2 dxdy\right)^ {\f{1}{2}}\geq c\Lambda(\ld^2),$$ which together with \eqref{b13} gives that
$$m_0\geq cR\Ld(\ld^2)\geq C\ld=\f{C\ld_{cr}}2>0.$$ This leads a contradiction for sufficiently small $m_0$.

{\bf Step 3.} In this step, we will show that there exists a $\ld_\mu\leq\Pi_{\ld_\mu}-4\t\e$, such that $k_{\ld_\mu,\mu}(b)=1$.

For any small $m_0>0$, with the aid of the results in Step 1, we can define a set $$\Sigma_\mu=\{\ld\mid
k_{\ld,\mu}(b)>1\}.$$ Define
\be\label{c48}\ld_\mu=\sup_{\ld\in\Sigma_\mu}\ld.\ee The result in Step 2 gives that \be\label{c49}\ld_\mu\leq C_0m_0,\ee where $C_0$ is a constant depending on $N,N_0$ and $\vartheta$, independent of $m_0$ and $\mu$.
It is easy to check that there exists a $C_0>0$ (independent of $\mu$), such that
$$\ld_\mu\leq C_0m_0\leq \Pi_{\ld_\mu}-4\t\e,$$ for sufficiently small $m_0$. The continuous dependence of $k_{\ld,\mu}(b)$ with respect to $\ld$ gives that
$$k_{\ld_\mu,\mu}(b)=1.$$ If not, the definition of $\ld_\mu$ implies that $k_{\ld_\mu,\mu}(b)>1$.
By using the continuous dependence of $k_{\ld,\mu}(b)$ with respect
to $\ld$ in Lemma \ref{lc7}, there exists a $\ld\in(\ld_\mu,\Pi_\ld-3\t\e)$, such that
$$\text{$\ld-\ld_\mu$ is small and $k_{\ld,\mu}(b)>1$}.$$
Therefore, $\ld\in\Sigma_\mu$, which contradicts to the definition
of $\ld_\mu$ in \eqref{c48}.

\end{proof}

\section{The existence of subsonic solution to the impinging jet flow problem}
To establish the existence of subsonic solution to the impinging jet flow problem, we will take limit
$\mu\rightarrow\infty$ to the solution $\psi_{\ld_{\mu},\mu}$ of the
truncated variational problem $(P_{\ld_{\mu},\mu})$ and show the
limit $\psi_\ld$ is indeed a solution to the following variational
problem $(P_{\ld})$ stated as follows in the whole domain $\O$.

{\bf{The variational problem $(P_{\ld})$:}}
 For any bounded domain $D\subset\O$, find a $\psi_\ld\in K$ such that
$$J_{D}(\psi_\ld)\leq J_{D}(\psi),$$
for any $\psi\in K_0$ with $\psi=\psi_\ld$ on $\p D$, where
$J_D(\psi)=\int_D G(\g\psi,\psi,x;\ld)dxdy$ and  $$ K_0=\left\{\psi\in K\mid  \psi\leq\min\left\{\f{m_0}{a^2}x^2,m_0\right\}\ \ \text{a.e. in}\ \ \O_\mu \right\}.$$

\begin{thm}\label{ld1} If $m_0>0$ is sufficiently small, there exist a $\ld\leq\Pi_\ld-4\t\e$ and a subsonic solution $(\psi_\ld,\Gamma_\ld)$ to the compressible impinging jet flow. \end{thm}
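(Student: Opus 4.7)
The plan is to obtain $(\psi_\lambda,\Gamma_\lambda)$ as a limit of the truncated minimizers $\psi_{\lambda_\mu,\mu}$ produced in Proposition \ref{lc8} as $\mu\to+\infty$, and then to verify that the limit is a genuine (unmodified) subsonic solution to the FBP. Concretely, I would first fix a sequence $\mu_n\to+\infty$ and the corresponding continuous-fit parameters $\lambda_{\mu_n}$. By Proposition \ref{lc8} one has the uniform bound $\lambda_{\mu_n}\leq C_0 m_0$, so for $m_0$ small enough the sequence $\{\lambda_{\mu_n}\}$ is contained in a compact subset of $(0,\Pi_\lambda-4\tilde\varepsilon)$ and, up to a subsequence, $\lambda_{\mu_n}\to\lambda$ with $\lambda\leq C_0 m_0$.

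Next I would extract a limit of the truncated minimizers. Lemma \ref{lb3} gives $|\nabla\psi_{\lambda_{\mu_n},\mu_n}|\leq C\Lambda x$ on any fixed compact set that stays away from $A$ and from the corners of $\partial\Omega_{\mu_n}$, and the endpoint estimate of Lemma \ref{lc5} handles a neighborhood of $A$; combined with the a priori bound $0\leq\psi_{\lambda_{\mu_n},\mu_n}\leq m_0$, this yields uniform local Lipschitz bounds on any compact subset of $\overline\Omega$. A diagonal argument then produces $\psi_\lambda\in K_0$ with $\psi_{\lambda_{\mu_n},\mu_n}\rightharpoonup\psi_\lambda$ in $H^1_{loc}(\Omega)$ and $\psi_{\lambda_{\mu_n},\mu_n}\to\psi_\lambda$ uniformly on compact subsets of $\overline\Omega$. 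Following the scheme used in Step~1--Step~4 of the proof of Lemma \ref{lc6} (Hausdorff convergence of $\{\psi_{\lambda_{\mu_n},\mu_n}<m_0\}\cap E$ via the bounded-gradient Lemma \ref{lb4} and the non-degeneracy Lemma \ref{lb5}, $L^1_{loc}$ convergence of the characteristic functions, a.e. convergence of $\nabla\psi_{\lambda_{\mu_n},\mu_n}$, and a competitor comparison argument), one checks that $\psi_\lambda$ minimizes $J_{\lambda,D}$ on every bounded subdomain $D\subset\Omega$, i.e. $\psi_\lambda$ solves the variational problem $(P_\lambda)$.

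With this minimizing property in hand, the analogs of Lemmas \ref{lb2}--\ref{lc3} give that $\psi_\lambda\in C^{0,1}_{loc}(\overline\Omega)\setminus\{A\}$ is analytic in $\Omega_0=\Omega\cap\{\psi_\lambda<m_0\}$, that the free boundary $\Gamma_\lambda=E\cap\partial\{\psi_\lambda<m_0\}$ is analytic with $|\nabla\psi_\lambda|/x=\lambda$ on $\Gamma_\lambda$, and that $\Gamma_\lambda$ is a continuous graph $y=k_\lambda(x)$. The continuous-fit condition $k_\lambda(b)=1$ is inherited in the limit: by Lemma \ref{lc7} (applied with $\mu$ fixed) and a careful Hausdorff-convergence argument of the same type, the equalities $k_{\lambda_{\mu_n},\mu_n}(b)=1$ pass to $k_\lambda(b)=1$; the monotone decay estimate \eqref{c16} is preserved in the limit and rules out the pathological cases in Lemma \ref{lc3} in the unbounded domain.

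The main obstacle, and the step where the smallness of $m_0$ is crucially used, is the verification of genuine global subsonicity so that the modified equation coincides with the original one. What needs to be shown is $|\nabla\psi_\lambda|/x\leq\Pi_\lambda-2\tilde\varepsilon$ throughout $\overline\Omega_0$; once this holds, $\tilde\rho\bigl(|\nabla\psi_\lambda/x|^2;\lambda^2\bigr)=\rho\bigl(|\nabla\psi_\lambda/x|^2;\lambda^2\bigr)$ by \eqref{b9}, the subsonic cutoff disappears, and setting $u=\psi_{\lambda,y}/(x\rho)$, $v=-\psi_{\lambda,x}/(x\rho)$ recovers a $C^{1,\alpha}$ solution of \eqref{a04}--\eqref{a05} with the prescribed Bernoulli constant and $p=p_{\mathrm{atm}}$ on $\Gamma_\lambda$. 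To establish this uniform subsonicity I would combine three ingredients: Lemma \ref{lb3} together with $\lambda\leq C_0 m_0$ gives $|\nabla\psi_\lambda|/x\leq C\Lambda\leq C' m_0$ on compact subsets of the flow region bounded away from $A$, the endpoint estimate Lemma \ref{lc5} gives a uniform bound near $A$ (independent of $m_0$ in its form, but the bound $\lambda=O(m_0)$ makes the corresponding momentum small), and a maximum-principle argument on $q^2=|\nabla\psi_\lambda/x|^2$ applied to the quasilinear equation $\tilde Q_\lambda\psi_\lambda=0$ propagates these boundary bounds into the interior. Choosing $m_0$ small enough so that the resulting bound is strictly less than $\Pi_\lambda-2\tilde\varepsilon$ completes the removal of the cutoff and finishes the proof.
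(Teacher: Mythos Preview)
Your proposal is correct and follows essentially the same route as the paper: pass to the limit $\mu\to\infty$ in the truncated minimizers using the stability machinery of Lemma~\ref{lc6}, inherit the continuous fit $k_\lambda(b)=1$ via the argument of Lemma~\ref{lc7}, and then remove the subsonic cutoff by a maximum-principle argument combined with $\lambda\le C_0m_0$. Two small refinements are worth flagging in the last step: the quantity to which the maximum principle applies is the \emph{speed} $q=\sqrt{u^2+v^2}$ (via the velocity potential $\varphi$, as in \cite{ACF5}) rather than the momentum $|\nabla\psi_\lambda|/x=\rho q$---though by \eqref{d4} the two share extrema---and to bound $q$ on the full boundary of the unbounded fluid region one also needs the upstream/downstream asymptotics (the paper's Step~2) together with the nozzle-wall estimate near the axis borrowed from \cite{XX2}, since Lemmas~\ref{lb3} and~\ref{lc5} by themselves handle only compact pieces away from the far field.
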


\begin{proof}

By virtue of the uniform gradient estimate
$|\nabla\psi_{\ld_{\mu_n},\mu_n}|\leq C$ in any compact subset of
$\mathbb{R}^2$, it follows from the similar arguments in the proof of Lemma \ref{lc6} that there
exist a subsequence $\{\psi_{\ld_{\mu_n},\mu_n}\}$ and
$\{\ld_{\mu_n}\}$ with $\ld_{\mu_n}\leq\Pi_{\ld_{\mu_n}}-4\t\e$ and $k_{\ld_{\mu_n},\mu_n}(b)=1$, such that
$$\ld_{\mu_n}\rightarrow \ld\leq\Pi_\ld-4\t\e,$$and$$\psi_{\ld_{\mu_n},\mu_n}\rightarrow \psi_\ld\ \text{weakly~
in}~~H_{loc}^1(\mathbb{R}^2) ~\text{and uniformly in any compact
subset of $\mathbb{R}^2$},$$ as $n\rightarrow\infty$. Moreover, it follows from \eqref{c49} that
\be\label{d1}\ld\leq C_0m_0,\ \ \text{the constant $C_0$ depends on $N,N_0,\vartheta$, not on $m_0$}.\ee

{\bf Step 1.} $\psi_\ld$ is a subsonic solution of the free boundary
problem \eqref{b11}.

By using the similar arguments in Step 4 in the proof of Lemma \ref{lc6}, we can check that $\psi_\ld$ is a
minimizer to the variational problem $(P_\ld)$. Moreover, the monotonicity of $\psi_{\ld_{\mu_n},\mu_n}(x,y)$ in Lemma \ref{lc1} gives that $\psi_\ld(x,y)$ is monotonic increasing with respect to
$y$, which implies that
the free boundary of $\psi_\ld$ is $x$-graph. Applying the similar arguments in the proof of Lemma \ref{lc3}, there exists a
continuous function $k_{\ld}(x)$, such that the free boundary
$\Gamma_{\ld}$ of the minimizer $\psi_\ld$ can be
described as
$$\Gamma_\ld=E\cap\p\{\psi_\ld<m_0\}: y=k_\ld(x)\ \ \text{for}\ \ x\in(b,+\infty).$$

Furthermore, it follows from the similar arguments in Lemma \ref{lc7} that
$$k_\ld(x)=\lim_{n\rightarrow\infty}k_{\ld_{\mu_n},\mu_n}(x) \ \ \text{for any $x\in[b,+\infty)$}.$$
In particular, one has
$$k_\ld(b)=1,$$ which is the continuous fit condition to
the axially symmetric compressible subsonic impinging jet flow.

In
Theorem 6.1 in \cite{ACF4} and Section 3.11 in \cite{FA1}, Alt, Caffarelli and Friedman obtained that the continuous fit condition implies the smooth fit condition at the detachment point $A$, namely,
$N\cup\Gamma_{\ld}$ is $C^1$ at $A$. Since $\psi_\ld$ is a minimizer to the variational problem $(P_\ld)$, it follows from Theorem 6.3 in \cite{ACF4} that the free boundary $\Gamma_\ld$ is $C^{1,\alpha}$, and thus $\psi_{\ld}$ is $C^{1,\alpha}$-smooth up to the free boundary $\Gamma_\ld$. In view of $\ld\leq\Pi_\ld-3\t\e$, the subsonic cut-off can be removed near $\Gamma_\ld$. Then $F(t;\ld)$ is analytic with respect to $t$, near the free boundary $\Gamma_\ld$. Recalling Remark 6.4 in \cite{ACF4}, we can conclude that the free boundary $\Gamma_\ld$ is analytic.
By using the similar arguments in the proof of Theorem 9.1 in \cite{CDW}, we can conclude that
$$\text{$\f{1}{x}\f{\p\psi_\ld}{\p\nu}=\left|\f{\nabla\psi_\ld}{x}\right|=\ld$
on $\Gamma_\ld$,}$$ where $\nu$ is outer normal vector to $\Gamma_\ld$. Utilizing Lemma 6.4 in \cite{ACF5}, one has
$$\text{$\g\psi_\ld$ is uniformly continuous in a $\{\psi_\ld<m_0\}$-neighborhood of $A$},$$ and $$\text{$\left|\f{\nabla\psi_\ld}{x}\right|=\ld$
at $A$.}$$
Since $\psi_\ld$ is a minimizer to the variational problem $(P_\ld)$, applying the results (2) in Lemma \ref{lb2}, one has
$$\t Q_\ld\psi_\ld=0\ \ \text{in}\ \ \O\cap\{\psi_\ld<m_0\}.$$

Hence, the minimizer $\psi_\ld$ is a solution of the truncated free boundary problem \eqref{b11}.

{\bf Step 2.} The asymptotic behavior of $\psi_\ld$ will be obtained. For the asymptotic behavior of $\psi_\ld$ in upstream, we can use the blow-up arguments in the proof of Lemma 5 in \cite{XX2}, and obtain that
$$\g\psi_\ld(x,y)\rightarrow(-v_{in},0),~~\t\rho\rightarrow\rho_{in}\ \ \text{and}~~\nabla(\p_x\psi_\ld,\p_y\psi_\ld,\tilde{\rho})\rightarrow0,$$
uniformly in any compact subset of $(0,a)$, as $y\rightarrow+\infty$, where
$v_{in}=-\f{2m_0}{\rho_0a^2}$ and
$\rho_{in}=\t\rho\left(\f{4m_0^2}{a^4};\ld^2\right)$.

By virtue of \eqref{c16}, there exists a constant $C>0$, such that
\be\label{d0}\int_{\O\cap\{x>x_0\}}x\left|\f{\nabla\psi}{x}-\l
e\chi_{\{\psi<m_0\}}\right|^2dxdy\leq \f{C}{x_0^3},\ee for sufficiently large $x_0>b$, where the constant $C$ is independent of $x_0$.

With the aid of \eqref{d0}, by using the similar arguments in the proof of Lemma \ref{lc4}, we can show that
 \be\label{d2}x(k_{\ld}(x)-g_0(x))\rightarrow\f{m_0}{\l\cos\theta}~~ \text{as}~~x\rightarrow+\infty,\ee
 and
  \be\label{d3}\f{\nabla\psi_{\ld}(x,y)}{x}\rightarrow(-\ld\sin\th,\ld\cos\th)~~ \text{for $(x,y)\in\O_\mu\cap\{\psi_{\ld}<m_0\}$, as $x\rightarrow+\infty$.}\ee

{\bf Step 3.} In this step, we will remove the subsonic cut-off for $\t\rho_\ld$ in \eqref{b9}.

For $\ld\leq\Pi_\ld-3\t\e$, it is easy to check that $\t\rho\left(\left|\f{\g\psi_\ld}{x}\right|^2;\ld^2\right)$ is a monotonic decreasing function of
$\left|\f{\g\psi_\ld}{x}\right|\in(0,\Pi_\ld)$. Moreover,
\be\label{d4}\text{$\f{|\g\psi_\ld|}{x}$ takes the maximum at $X_0$ if and
only if $q$ takes the maximum at $X_0$,}\ee where $q=\sqrt{u^2+v^2}$ is the speed of the fluid.

By using the similar arguments in Page 114 in \cite{ACF5}, one has
$$\mathcal{Q} q^2=D_i\left(e^{\gamma q^2}a_{ij}(X;\ld)D_j
q^2\right)\geq 0\ \ \text{in the fluid region $\O_0$,}$$ for some
$\gamma>0$, where
$$a_{ij}(X;\ld)=\t\rho(|\g\varphi(X)|^2;\ld^2)\delta_{ij}+2\rho_1(|\g\varphi(X)|^2;\ld^2)D_i\varphi(X)
D_j\varphi(X)$$ and the potential function $\varphi$ satisfies that $\g\varphi=(u,v)$. In view of the maximum
principle for $q^2$ in $\O_0$, we have that $q^2$ cannot take its maximum in $\O_0$. Since the flow is assumed to be symmetric with respect to the symmetric axis $I$, thus $I$ can be regarded as the interior of the fluid field by the even extension
of $\varphi$. Thus we conclude that the speed $q$ cannot take its maximum at the symmetric axis $I$. By virtue of \eqref{d4}, $\f{|\g\psi_\ld(X)|}x$ takes its maximal value at $N\cup\Gamma_\ld\cup N_0$ or in the far field. Next, we consider
the following three cases for the maximal momentum $\f{|\g\psi_\ld(X)|}x$.

{\bf Case 1.} $\f{|\g\psi_\ld(X)|}x$ takes its maximum in the far field or on the free boundary $\Gamma_\ld$. By virtue of \eqref{d1}, it follows from the asymptotic behavior and the free boundary condition that
\be\label{d5}\sup_{X\in\bar{\O}_0}\f{|\g\psi_\ld(X)|}x=
\max\left\{\f{m_0}{a},\ld\right\}\leq C_0m_0,\ee where
$C_0$ is a constant depending only on $N, N_0$ and $\vartheta$.

{\bf Case 2.} $\f{|\g\psi_\ld(X)|}x$ takes its maximum on walls
$N_0\cap\left\{x\leq\f{a+b}{2}\right\}$ or on $N\cap\left\{x\leq\f{a+b}{2}\right\}$. By using the similar arguments in Section 3 in \cite{XX2}, we have
\be\label{d6}\sup_{X\in\bar{\O}_0}\f{|\g\psi_\ld(X)|}{x}\leq C_0m_0,\ee where $C_0$
is a constant depending only on $N,N_0$ and $\vartheta$.

%{\bf Case 3.} $|\g\psi_\ld|$ takes its maximum at the symmetric axis
%$T$. Define a function $\o(y)=\min\left\{\ld_b y,m_0\right\}$ with
%$\ld_b=\f{2m_0}{b}$, in view of the asymptotic behavior of
%$\psi_\ld$ in the far field, by using the maximum principle, one has
%$$\psi_\ld(x,y)\leq \o(y)\ \ \text{in}\ \ \O\cap\left\{y<\f{b}{2}\right\}.$$ Applying the maximum principle again, we have
%$$\f{\p \psi_\ld}{\p \nu}\leq\f{\p \o}{\p\nu}=\f{2m_0}b\ \ \text{at}\ \
%T,$$ where $\nu$ is the inner normal vector to $T$, which implies
%that \be\label{c14}\sup_{\bar{\O}_0}|\psi_\ld|\leq \f{2m_0}b.\ee

{\bf Case 3.} $\f{|\g\psi_\ld(X)|}x$ takes its maximum at the nozzle wall
$N_0\cap\left\{x\geq\f{a+b}2\right\}$ or $N\cap\left\{x\geq\f{a+b}2\right\}$. Applying the similar arguments in the proof of Theorem \ref{lb3} and Lemma \ref{lc5}, we have
\be\label{d7}\sup_{X\in\bar{\O}_0}\f{|\g\psi_\ld(X)|}{x}\leq C\ld\leq C_0m_0,\ee where $C_0$
is a constant depending only on $N,N_0$ and $\vartheta$.

It follows from \eqref{d6}-\eqref{d7} that
$$\f{|\g\psi_{\ld}(X)|}{x}\leq C_0m_0\ \ \text{in $\O_0$},$$
which implies  that $C_0m_0\leq\Pi_\ld-4\t\e$ for sufficiently small $m_0>0$. Then the subsonic cut-off can be taken away
 $\rho(t;\ld^2)=\t\rho(t;\ld^2)$.

{\bf Step 4.} In this step, the positivity of horizontal velocity will be obtained, namely,  then
\be\label{d8}\f{\p\psi_\ld}{\p y}>0~~\text{in}~~\bar{\O}_0\setminus I,\ee
where $\O_0=\O\cap\{\psi_\ld<m_0\}$.

Set $\o=\p_y\psi_\ld$, which solves
$$\p_i\left(\f{\p_i\o x^2\rho-2\rho_1\p_i\psi_\ld\p_j\psi_\ld\p_j\o}{x^3\rho^2}\right)=0\ \ \text{in}\ \ \O\cap\{\psi_\ld<m_0\}.$$
Since $\o\geq0$ in $\O_0$, the strong maximum principle gives that $\o>0$ in $\O_0$.

Owing to that $\psi_\ld$ attains its maximal value $m_0$ on $N$, it follows from Hopf's lemma that
$$0<\f{\p\psi_\ld}{\p\nu}(x,g(x))=\p_y\psi_\ld\sqrt{1+(g'(x))^2}=\o\sqrt{1+(g'(x))^2}\ \ \text{on}\ \ N\setminus A,$$ where $\nu=\f{(g'(x),1)}{\sqrt{1+(g'(x))^2}}$ is the outer normal vector to $N$. Similarly, we can show that
$$\o>0\ \ \text{on}\  \ N_0\setminus(0,g_0(0)).$$
Next, we will show that \be\label{d9}\o>0\ \ \text{on}\ \
\Gamma_\ld.\ee Suppose that there exists a free boundary point
$X_0=(x_0,y_0)$, such that $\o(X_0)=0$. Without loss of generality,
we take $\nu=(0,1)$ as the outer normal vector of $\Gamma_\ld$ at
$X_0$. Since $\Gamma_\ld$ is analytic at $X_0$, thanks to Hopf's
lemma, one has \be\label{d10}\p_{xy}\psi_\ld=\f{\p\o}{\p
x}=\f{\p\o}{\p\nu}<0\ \ \text{at}\ \ X_0.\ee On another hand, it
follows from $|\g\psi_\ld|^2=\ld^2 x^2$ on $\Gamma_{\ld}$ that
$$0=\f{\p(\ld^2x^2)}{\p s}=\f{\p|\g\psi_\ld|^2}{\p s}=2\p_x\psi_\ld\p_{xy}\psi_\ld+2\p_y\psi_\ld\p_{yy}\psi_\ld=2\p_x\psi_\ld\p_{xy}\psi_{\ld},$$ where $s=(0,1)$ is the tangential vector of $\Gamma_\ld$ at $X_0$. This contradicts to \eqref{d10}.

Recalling that $|g'(b-0)|<+\infty$, one has
$$\o=\f{|\g\psi_\ld|}{\sqrt{1+(g'(x))^2}}=\f{\ld x}{\sqrt{1+(g'(x))^2}}>0\ \ \text{at}\ \ A.$$

Hence, we complete the proof of \eqref{d8}. In view of \eqref{d9}, the implicit function theorem gives that $k_\ld(x)\in C^1((b,\infty))$. The analyticity of free boundary $\Gamma_\ld$ gives that $k_\ld(x)$ is analytic in $(b,+\infty)$.

\end{proof}

\subsection{Uniqueness of the compressible subsonic jet flow}

In this subsection, we will consider the uniqueness of subsonic solution of
the compressible jet flow problem for any given incoming mass flux $m_0$.

\begin{thm}\label{ld2}
For any given $m_0>0$, suppose that $(\psi_{\lambda_1},\t\Gamma_{\ld_1})$ and
$(\t\psi_{\lambda_2},\Gamma_{\ld_2})$ be two subsonic solutions to compressible impinging jet flow problem, respectively.
Then $\ld_1=\ld_2$ and $\psi_{\lambda_1}=\t\psi_{\lambda_2}$.
\end{thm}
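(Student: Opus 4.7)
The plan is to reduce the uniqueness to two separate claims: (a) each subsonic solution is a minimizer of the variational problem $(P_{\ld})$ for its own parameter $\ld$, and uniqueness of the minimizer for fixed $\ld$ then forces $\psi_{\ld_1}=\psi_{\ld_2}$ once $\ld_1=\ld_2$ is established; and (b) the continuous fit condition $k_{\ld}(b)=1$ uniquely determines $\ld$.

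For (a), I would first verify that any subsonic solution $(\psi_{\ld_i},\Gamma_{\ld_i})$ of the FBP is a minimizer of $(P_{\ld_i})$ in $K_0$. The argument is a convexity / integration-by-parts computation: for any competitor $\psi\in K_0$, convexity of $f(\eta;\ld_i)$ and linearity of the transport piece $-2\ld_i F_1(\ld_i^2;\ld_i)\g\psi\cdot e$ give a lower bound for $J_{\ld_i}(\psi)-J_{\ld_i}(\psi_{\ld_i})$ whose principal part is a volume integral against $f_\eta(\g\psi_{\ld_i}/x;\ld_i)$; the Euler--Lagrange equation $\t Q_{\ld_i}\psi_{\ld_i}=0$ kills this after integration by parts, and the free boundary condition $\tfrac{1}{x}|\g\psi_{\ld_i}|=\ld_i$ on $\Gamma_{\ld_i}$ combined with the decay estimate \eqref{c16} turns the remaining boundary/indicator contribution into a nonnegative quantity. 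With this identification, uniqueness of the minimizer of $(P_\ld)$ for fixed $\ld$ follows by passing $\mu\to+\infty$ in Lemma \ref{lc1}, using the convergence of Lemma \ref{lc6} and the stability of $k_{\ld,\mu}$ from Lemma \ref{lc7}.

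For (b), I would argue by contradiction, assuming $\ld_1<\ld_2$. The downstream asymptotics $x(k_{\ld_i}(x)-g_0(x))\to m_0/(\ld_i\cos\theta)$ from \eqref{d2} imply that $\Gamma_{\ld_1}$ lies eventually strictly above $\Gamma_{\ld_2}$. Translating $\psi_{\ld_1}$ downward, $\psi_{\ld_1,\varepsilon}(x,y):=\psi_{\ld_1}(x,y+\varepsilon)$, for $\varepsilon$ large enough $\Gamma_{\ld_1,\varepsilon}$ lies strictly below $\Gamma_{\ld_2}$ everywhere and $\psi_{\ld_1,\varepsilon}\le\psi_{\ld_2}$ in $\O\cap\{\psi_{\ld_2}<m_0\}$. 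Decreasing $\varepsilon$ to the critical value $\varepsilon^*\ge 0$ of first contact, there are three cases. \emph{Interior contact} of the two free boundaries: linearisation of the two equations $\t Q_{\ld_1}\psi_{\ld_1,\varepsilon^*}=0$ and $\t Q_{\ld_2}\psi_{\ld_2}=0$ produces a uniformly elliptic equation for $\psi_{\ld_2}-\psi_{\ld_1,\varepsilon^*}\ge 0$, and Hopf's lemma together with the free boundary condition at the contact point yields $\ld_1=\ld_2$, a contradiction. \emph{Contact only at $A$} with $\varepsilon^*=0$ (allowed by the continuous fit $k_{\ld_i}(b)=1$): a Cauchy--Kovalevskaya argument analogous to the one in the proof of Lemma \ref{lc7} identifies $\psi_{\ld_1}$ with $\psi_{\ld_2}$ in a one-sided neighbourhood of $A$, again forcing $\ld_1=\ld_2$. \emph{Contact on the fixed walls} $N$ or $N_0$ is excluded by the interior Hopf lemma applied to the linearised equation.

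The main obstacle will be the comparison step itself when $\ld_1\ne\ld_2$, because $\t Q_{\ld_1}$ and $\t Q_{\ld_2}$ are distinct quasilinear operators (the density $\rho(\cdot;\ld^2)$ depends on $\ld$). My remedy is to write the difference of the two Euler--Lagrange equations in divergence form with coefficients realised as line integrals of the derivatives in \eqref{b5} and \eqref{bb5}, exploiting the global subsonicity of both solutions to secure uniform ellipticity with bounded coefficients. The sign of \eqref{bb5} produces a zero-order correction of favourable sign, which reinforces rather than breaks the maximum principle and enables the Hopf analysis at the contact point.
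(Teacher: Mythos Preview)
Your plan for part (b) contains a real gap. When $\ld_1\neq\ld_2$ the two solutions satisfy genuinely different quasilinear operators, and the ``remedy'' you propose does not work as stated: writing the difference of the two divergence-form equations yields $\Div\!\big(A\nabla(\psi_{\ld_2}-\psi_{\ld_1,\varepsilon^*})/x\big)=-\Div B$, where $B$ comes from $\partial_{\ld^2}\big(1/\rho(t;\ld^2)\big)$ applied to $\nabla\psi_{\ld_1}/x$. This is an inhomogeneous source in divergence form, not a zero-order term; the sign in \eqref{bb5} fixes the sign of $B$ but not of $\Div B$ (which involves second derivatives of $\psi_{\ld_1}$), so neither the comparison principle nor Hopf's lemma is available for the difference. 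Your Cauchy--Kovalevskaya suggestion at $A$ suffers from the same defect: you cannot identify the two solutions locally when the equations themselves differ.

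The paper resolves this with a normalization you are missing. Set $\phi_1=(m_0-\psi_{\ld_1})/\Pi_{\ld_1}$ and $\tilde\phi_2=(m_0-\tilde\psi_{\ld_2})/\Pi_{\ld_2}$, where $\Pi_\ld=\rho_{\ld,cr}q_{\ld,cr}$. After this rescaling the Bernoulli relation becomes $\ld$-free (equations \eqref{d13}--\eqref{d14}), so $\phi_1$ and $\tilde\phi_2$ satisfy the \emph{same} quasilinear operator $Q$. A clean sliding argument then applies; at a free-boundary contact Hopf gives $\ld_1/\Pi_{\ld_1}<\ld_2/\Pi_{\ld_2}$ (or $\leq$ at $A$ via the $C^1$ smooth fit), and a direct computation shows $\ld\mapsto\ld/\Pi_\ld$ is strictly increasing on $(0,\ld_{cr})$, contradicting $\ld_1>\ld_2$. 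Your part (a) is also more elaborate than necessary: once $\ld_1=\ld_2$ the paper repeats the sliding argument directly (now both solutions satisfy the same equation with the same free-boundary value $\ld$), rather than passing through any variational characterization.
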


\begin{proof}%[Proof the uniqueness of compressible subsonic jet flow]

We will divide the proof into two steps.

 {\bf Step 1.} We will show
that $\ld_1=\ld_2$. If not, without loss of generality, one may
assume that $\ld_1>\ld_2$. In view of the asymptotic behaviors of
$\psi_{\ld_1}$ and $\t\psi_{\ld_2}$ in downstream (see Step 2 in the
proof of Theorem \ref{ld1}), one has  $$k_{\ld_1}(x)-g_0(x)\sim
\f{m_0}{\ld_1x\cos\th}\ \ \text{and}\ \ \t k_{\ld_2}(x)-g_0(x)\sim
\f{m_0}{\ld_2x\cos\th}$$ for sufficiently large $x>0$, which implies
\be\label{d12}k_{\ld_1}(x)<\t k_{\ld_2}(x)\ \ \text{for sufficiently
large $x>0$.}\ee   Denote $\phi_1=\f{m_0-\psi_{\ld_1}}{\Pi_{\ld_1}}$
and $\t\phi_{2}=\f{m_0-\t\psi_{\ld_2}}{\Pi_{\ld_2}}$, where
$\Pi_{\ld_1}=\rho_{\ld_1,cr}q_{\ld_1,cr}$ and
$\Pi_{\ld_2}=\rho_{\ld_2,cr}q_{\ld_2,cr}$. It is easy to check that
\be\label{d13}\f{|\g\phi_1|^2}{2x^2\rho^2}+\f{\rho^{\gamma-1}}{\gamma-1}=\f{\gamma+1}{2(\gamma-1)}\
\ \text{in}\ \ \O\cap\{\phi_1>0\},\ee and
\be\label{d14}\f{|\g\t\phi_2|^2}{2x^2\rho^2}+\f{\rho^{\gamma-1}}{\gamma-1}=\f{\gamma+1}{2(\gamma-1)}\
\ \text{in}\ \ \O\cap\{\t\phi_2>0\},\ee where
$\rho(t)=\f{\rho\left(\f{t}{\Pi^2_{\ld_1}};\ld^2_1\right)}{\rho_{\ld_1,cr}}=\f{\rho\left(\f{t}{\Pi^2_{\ld_2}};\ld^2_2\right)}{\rho_{\ld_2,cr}}$.
Moreover, $\rho=\rho\left(\left|\f{\g\phi}{x}\right|^2\right)$ is
monotone decreasing with respect to $\left|\f{\g\phi}x\right|\in [
0,1)$. Thus $\phi_1$ and $\t\phi_2$ satisfy the following
quasilinear elliptic equations,
$$Q\phi_1=\text{div}\left(\f{\g\phi_1}{x\rho(|\f{\g\phi_1}{x}|^2)}\right)=0\ \
\text{in the fluid $\O\cap\{\phi_1>0\}$},$$ and
$$Q\t\phi_2=\text{div}\left(\f{\g\t\phi_2}{x\rho(|\f{\g\t\phi_2}{x}|^2)}\right)=0\ \
\text{in the fluid $\O\cap\{\t\phi_{2}>0\}$},$$ respectively. Denote
$\t\phi_2^{\e}(x,y)=\t\phi_{2}(x,y-\e)$ for $\e\geq0$, let
$\t\Gamma_{\ld_2}^{\e}: y=\t k_{\ld_2}(x)+\e$ to be the free boundary of $\t\phi_2^{\e}$. Choose $\e_0\geq0$ to be the smallest one, such that
$$\t\phi_2^{\e_0}(X)\geq\phi_{1}(X)\ \text{in $\O$, and}\ \t\phi_2^{\e_0}(X_0)=\phi_{1}(X_0)\ \text{for some $X_0\in\overline{\O\cap\{\phi_1>0\}}$}.$$
 Next, we consider
the following two cases for $\e_0$.

{\bf Case 1.} $\e_0=0$, then we can choose $X_0=A$. The strong maximum principle gives that
$$\t\phi_{2}(X)>\phi_{1}(X)\ \text{and} \ Q\phi_{1}=Q\t\phi_{2}=0\ \text{in $\O\cap\{\phi_1>0\}$}.$$

Since $\Gamma_{\ld_1}\cup N$ and $\t\Gamma_{\ld_2}\cup N$ are $C^1$
at $A$, one has
\be\label{d15}\f{\ld_1}{\Pi_{\ld_1}}=\f 1b\f{\p\phi_{1}}{\p\nu}\leq\f1 b\f{\p\t\phi_{2}}{\p\nu}=\f{\ld_2}{\Pi_{\ld_2}}\
\ \ \text{at $A$,}\ee where $\nu$ is the inner normal vector of
$\Gamma_{\ld_1}$ and $\t\Gamma_{\ld_2}$ at $A$. After a direct computation, one has
$$\f{d}{d\ld}\left(\f{\Pi_\ld}{\ld}\right)=\f{\Pi_\ld(\ld^2-\ld_{cr}^2)}{(\gamma-1)\ld\rho^2_{0}\mathcal{B}(\ld^2)}<0
\ \ \text{ for $\ld<\ld_{cr}$,}$$ which implies that
\be\label{d16}\f{\Pi_{\ld_1}}{\ld_1}<\f{\Pi_{\ld_2}}{\ld_2}\ \
\text{for any $0<\ld_2<\ld_1<\ld_{cr}$}.\ee This contradicts to \eqref{d15}.

{\bf Case 2.} $\e_0>0$. It follows from strong maximum principle that $X_0\notin
\O\cap\{\phi_{1}>0\}$.
In fact, if there exists a point
$X_0\in\O\cap\{\phi_1>0\}$,
the continuity of $\phi_{1}(X)$ and $\t\phi_2^{\e_0}(X)$ give that
there exists a disc $B_r(X_0)\subset \O\cap\{\phi_1>0\}$ with $r>0$, such that
$$Q\phi_{1}=Q\t\phi_2^{\e_0}=0\ \text{in $B_r(X_0)$, and}\
\t\phi_2^{\e_0}(X)\geq\phi_{1}(X)\ \text{in $B_r(X_0)$}.$$ Since $\phi_{1}(X_0)=\t\phi_2^{\e_0}(X_0)$, the strong
maximum principle implies that $\phi_{1}(X)\equiv\t\phi_2^{\e_0}(X)$
in $B_r(X_0)$. By using the strong maximum principle again, one has
$$\phi_{1}(X)\equiv\t\phi_2^{\e_0}(X)\ \ \text{ in
$\O\cap\{\phi_{1}>0\}$},$$ which leads a contradiction.

In view of $\e_0>0$, it follows from \eqref{c16} that $|X_0|<+\infty$, and thus $X_0\in
\t\Gamma_{\ld_2}^{\e_0}\cap\Gamma_{\ld_1}$. Moreover,
$$\t\phi_2^{\e_0}(X)>\phi_{1}(X)\ \text{and} \
Q\phi_{1}=Q\t\phi_2^{\e_0}=0\ \text{in $\O\cap\{\phi_1>0\}$}.$$ Since
$\Gamma_{\ld_1}$ and $\t\Gamma_{\ld_2}^{\e_0}$ are analytic at $X_0$, it follows from
Hopf's lemma that
$$\f{\ld_2}{\Pi_{\ld_2}}=\f{1}{x}\f{\p\t\phi_2^{\e_0}}{\p\nu}>\f1{x}\f{\p\phi_{1}}{\p\nu}=\f{\ld_1}{\Pi_{\ld_1}}\
\ \ \text{at $X_0$,}$$ where $\nu$
is the inner normal vector to $\t\Gamma_{\ld_2}^{\e_0}$ and $\Gamma_{\ld_1}$
at $X_0$, which leads a contradiction to the assumption
$\ld_1>\ld_2$, due to \eqref{d16}.

Hence, we obtain that $\ld_1=\ld_2$, and denote $\ld=\ld_1=\ld_2$ in
the following.

{\bf Step 2.} $\psi_\ld=\t\psi_\ld$. Suppose that $\psi_\ld\neq\t\psi_\ld$,
without loss of the generality, one may assume that there exists some
$x_0\in(0,+\infty)$, such that \be\label{d17}k_{\ld}(x_0)>\t
k_{\ld}(x_0)\ \ \text{for some $x_0>0$.}\ee Consider a function
$\psi_\ld^{\e}(x,y)=\psi_{\ld}(x,y-\e)$ for $\e\geq0$ and $\Gamma_\ld^{\e}: y=k_{\ld}(x)+\e$
is the free boundary of $\psi_\ld^{\e}$, choosing the smallest
$\e_0\geq0$ such that
$$\psi_\ld^{\e_0}(X)\leq\t\psi_{\ld}(X)\ \text{in $\O$, and}\ \psi_\ld^{\e_0}(X_0)=\t\psi_{\ld}(X_0)\ \text{for some $X_0\in\bar\O$}.$$
It follows from \eqref{d17} that $\e_0>0$, which together with the
strong maximum principle and the asymptotic behavior imply that
$X_0\notin\O\cap\{\t\psi_{\ld}<m_0\}$ and $X_0\in
\Gamma_\ld^{\e_0}\cap\t\Gamma_{\ld}$ with $|X_0|<+\infty$. Then we have
$$\psi_\ld^{\e_0}(X)<\t\psi_{\ld}(X)\ \text{and} \
Q_\ld\psi_\ld^{\e_0}=Q_\ld\t\psi_{\ld}=0\ \text{in
$\O\cap\{\t\psi_{\ld}<m_0\}$}.$$

 Thanks to the Hopf's lemma, one has
$$\ld=\f1{x}\f{\p\psi_\ld^{\e_0}}{\p\nu}>\f1{x}\f{\p\t\psi_{\ld}}{\p\nu}=\ld\
\ \ \text{at $X_0$,}$$ where $\nu$ is the outer normal vector of $\Gamma_\ld^{\e_0}\cap\t\Gamma_{\ld}$ at $X_0$, which leads
a contradiction.

\end{proof}

\section{The existence of the critical mass flux}

For any sufficiently small
$m_0>0$, we have shown that there exist a unique $\ld\leq\Pi_\ld-4\t\e$ and a unique solution
$(u,v,\rho,\Gamma_\ld)$ to the free boundary problem in previous section. One key point is that the smallness of $m_0$ guarantee
the global subsonicity of the compressible jet flow. In this
section, we will increase $m_0$ as large as possible, and
obtain the critical upper bound of the incoming mass flux $m_0$.

Let $\{\e_n\}_{n=1}^{\infty}$ be a strictly decreasing sequence with
$\e_n\downarrow 0$. Denote $\psi^n_{\ld,m}(x,y)$ as the solution
of the following free boundary value problem
\be\label{e1}\left\{\begin{array}{ll}&\g\cdot\left(\f{\nabla
\psi}{x\rho^{n}(|\f{\nabla\psi}{x}|^2;\ld^2)}\right)=0\ \ \text{in}\ \O\cap\{\psi<m\},\\
&\psi=0\ \text{on}\ T,\ \psi=m\ \text{on}\ N\cup
\Gamma^n_{\ld,m},\\
&\f1x\f{\p\psi}{\p \nu}=\ld\ \ \text{on}\ \
\Gamma^n_{\ld,m},\end{array}\right.\ee for any sufficiently small
$m>0$ and the free boundary $\Gamma_{\ld,m}^n: y=k^n_{\ld,m}(x)$
satisfies the continuous fit condition $k^n_{\ld,m}(b)=1$, where
$\nu$ is the outer normal vector. Here, $\rho^{n}(t;\ld^2)$ is a
smooth function satisfying
$$\rho^{n}(t;\ld^2)=\left\{\begin{array}{ll} \rho(t;\ld^2)\ \ \
&\text{if}\ \
0\leq t\leq(\Pi_\ld-2\e_n)^2,\\
\rho\left((\Pi_\ld-\e_n)^2;\ld^2\right)\ \ \ &\text{if}\ \
t\geq(\Pi_\ld-\e_n)^2,
\end{array}\right.
$$ and
$\rho^{n}(t;\ld)-2\rho_1^{n}(t;\ld)t<\gamma_n<+\infty$ with some constant $\gamma_n>0$. %for
%all $m\geq 0$.

First,  we define a set
$$\mathcal{K}_n(m)=\left\{\psi_{\ld,m}^n\mid \text{$\psi_{\ld,m}^n$ is a subsonic solution to free boundary problem \eqref{e1}}\right\},$$ for any small $\e_n>0$ and
 $m>0$. For any small $m>0$ and $\e_n>0$, it follows from Theorem \eqref{ld1} that
there exist a $\ld=\ld(m)$ and a unique subsonic solution
$\psi^n_{\ld(m),m}$ to the free boundary problem with
$$\text{$\ld(m)\leq\Pi_{\ld(m)}-4\e_n$ and $\sup_{X\in\O\cap\{0<\psi^n_{\ld(m),m}<m\}}\left|\f{\g\psi^n_{\ld(m),m}}{x}\right|^2-\Pi_{\ld(m)}\leq -4\e_n$}.$$  Thus $m\in\mathcal{K}_n(m)$ for small $m>0$ and the set $\mathcal{K}_n(m)$ is not empty. The uniqueness of subsonic solution and $\ld$ are established in Section 4. Denote $\ld=\ld(m)$ and $\psi^n_{\ld(m),m}$ as the unique subsonic solution to the free boundary problem \eqref{e1} with
 $$\text{$\ld(m)\leq\Pi_{\ld(m)}-4\e_n$ and $\sup_{X\in\O\cap\{0<\psi^n_{\ld(m),m}<m\}}\left|\f{\g\psi_{\ld(m),m}^n}{x}\right|-\Pi_{\ld(m)}\leq -4\e_n$}.$$

Denote \be\label{e2}T_n(m)=\inf_{\psi_{\ld,m}^n\in
\mathcal{K}_n(m)}\left(\sup_{(x,y)\in\O^n_{\ld,m}}\left|\f{\g\psi_{\ld,m}^n}{x}\right|-\Pi_\ld\right),\ee
where $\O^n_{\ld,m}=\O\cap\{0<\psi^n_{\ld,m}<m\}$.

Define a set
$$\ba{rl}\Sigma_n=\{m\mid &\text{for any $\tau\in(0,m)$, there exists a subsonic solution
$\psi_{\ld,m}^n$ to the
}\\
&\text{free boundary problem with
$T_n(\tau)\leq-4\e_n$}\}.\ea$$

Along the above arguments, we have $m\in\Sigma_n$ for small $m>0$, which implies that the set
$\Sigma_n$ is non-empty. Obviously,
$\Sigma_n\subset\Sigma_{n+1}$.

Set \be\label{e3}m_n=\sup_{m\in \Sigma_n} m.\ee The definition of $m_n$ implies that $m_n$ is
monotone increasing with respect to $n$. For $m>0$, it is easy to check that
\be\label{e4}m=\psi_{\ld,m}^n(B)-\psi_{\ld,m}^n(b,g_0(b))\leq
\sup_{X\in\O^n_{\ld,m}}|\g\psi_{\ld,m}^n(X)||1-g_0(b)|\leq\ld_{cr}b|1-g_0(b)|.\ee

With the aid of \eqref{e4}, we can define \be\label{e5}m_{cr}=\lim_{n\rightarrow+\infty}
m_n.\ee

\begin{lem}\label{le1}
For any $m\in(0,m_n]$, $T_n(m)$ is left-continuous with respect to $m$, namely, $T_n(m)=\lim_{\tau\rightarrow m^-}
T_n(\tau)$.

\end{lem}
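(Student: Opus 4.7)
The plan is to show that as $\tau\to m^-$ the subsonic solutions $\psi^n_{\ld(\tau),\tau}\in\mathcal{K}_n(\tau)$ associated to mass flux $\tau$ converge to the subsonic solution $\psi^n_{\ld(m),m}\in\mathcal{K}_n(m)$ associated to mass flux $m$, and that the supremum of $\left|\f{\g\psi}{x}\right|$ passes to the limit; the left-continuity of $T_n$ then follows from the continuity of $\ld\mapsto\Pi_\ld$. The main ingredients will be the uniform gradient estimate (Lemma \ref{lb3}), the continuous dependence of the minimizer and of the free boundary with respect to $\ld$ (Lemmas \ref{lc6} and \ref{lc7}), and the uniqueness result (Theorem \ref{ld2}).

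I would fix any sequence $\tau_j\uparrow m$. For each $j$, by the definition of $T_n(\tau_j)$, choose a subsonic solution $\psi_j\in\mathcal{K}_n(\tau_j)$ with parameter $\ld_j\leq\Pi_{\ld_j}-4\e_n$ satisfying
\[
\sup_{\O^n_{\ld_j,\tau_j}}\left|\f{\g\psi_j}{x}\right|-\Pi_{\ld_j}\leq T_n(\tau_j)+\f{1}{j}.
\]
The a priori bound \eqref{e4} gives $\ld_j\leq\ld_{cr}$ uniformly, and Lemma \ref{lb3} yields local uniform Lipschitz bounds on $\psi_j$. Extracting a subsequence, $\ld_j\to\ld_*\leq\Pi_{\ld_*}-4\e_n$ and $\psi_j\to\psi_*$ weakly in $H^1_{loc}(\O)$ and uniformly on compact sets. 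Repeating the arguments in the proofs of Lemma \ref{lc6} (Hausdorff convergence of the free boundaries, passage to the limit in the quasilinear equation and in the overdetermined free-boundary condition) and Lemma \ref{lc7} (continuous fit at $A$), $\psi_*$ solves the modified free boundary problem \eqref{e1} with parameter $\ld_*$, mass flux $m$, and $k_*(b)=1$. By Theorem \ref{ld2}, $\psi_*=\psi^n_{\ld(m),m}\in\mathcal{K}_n(m)$.

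It then remains to establish $\sup_{\O^n_{\ld_j,\tau_j}}\left|\f{\g\psi_j}{x}\right|\to\sup_{\O^n_{\ld_*,m}}\left|\f{\g\psi_*}{x}\right|$. Locally uniform convergence of $\g\psi_j$ to $\g\psi_*$ on compact subsets of the fluid region (from interior elliptic estimates and $C^{1,\alpha}$ regularity up to $N$, $N_0$ and the free boundary) immediately yields $\liminf_j\sup|\g\psi_j/x|\geq\sup|\g\psi_*/x|$. For the reverse inequality I would follow the case analysis in Step 3 of the proof of Theorem \ref{ld1}: the supremum of $|\g\psi_j|/x$ is attained or approached either on the free boundary (where it equals $\ld_j\to\ld_*$), on the fixed walls (where uniform estimates from Lemma \ref{lb3} and the boundary $C^{1,\alpha}$ theory apply), near the detachment point $A$ (controlled uniformly in $j$ by Lemma \ref{lc5}), or in the far field (where $|\g\psi_j|/x\to\ld_j$ uniformly thanks to the decay estimate \eqref{c16}). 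In each case the limsup is bounded by $\sup|\g\psi_*/x|$, and combining with $\Pi_{\ld_j}\to\Pi_{\ld_*}$ gives $T_n(\tau_j)\to T_n(m)$.

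The main obstacle will be this last step: ruling out that the supremum of $|\g\psi_j|/x$ concentrates along a sequence of points whose limit is an exceptional point (the detachment point $A$, a corner of $\p\O$, or a point at infinity) where the local $C^{1,\alpha}$ convergence degenerates. Overcoming this requires combining, uniformly in $j$, the gradient bound near $A$ from Lemma \ref{lc5}, the boundary estimate of Lemma \ref{lb3} near $N$ and $N_0$, and the downstream decay \eqref{c16}, to conclude that the maximizing points stay in a region on which $C^{1,\alpha}$ compactness is available.
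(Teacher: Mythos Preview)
Your approach is essentially the same as the paper's: take $\tau_k\uparrow m$, pass to a subsequential limit $(\ld_*,\psi_*)$ by compactness, identify it with the unique solution at mass flux $m$ via Theorem~\ref{ld2}, and conclude. One minor simplification: since Theorem~\ref{ld2} already guarantees that $\mathcal{K}_n(\tau_j)$ is a singleton, the infimum defining $T_n(\tau_j)$ is attained and your ``$+1/j$'' approximation is unnecessary; the paper uses this directly and is then rather terser than you are about the convergence of the supremum of $|\nabla\psi|/x$, simply asserting it, whereas your case analysis (free boundary, fixed walls, near $A$, far field) makes that step explicit.
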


\begin{proof}For any $m\in(0,m_n]$, there exists a sequence $\{\tau_k\}$ with $\tau_k\uparrow m$. The definition of $\Sigma_n$
gives that there exists a subsonic solution $\psi^n_{\ld,\tau_k}$ to
the free boundary problem, which satisfies that
$$T_n(\tau_k)=\inf_{\psi_{\ld,\tau_k}^n\in
\mathcal{K}_n(\tau_k)}\left(\sup_{X\in\O^n_{\ld,\tau_k}}\left|\f{\g\psi_{\ld,\tau_k}^n(X)}{x}\right|-\Pi_\ld\right)\leq
-4\e_n.$$ By using the uniqueness result in Section 4, we have that
$\ld=\ld(\tau_k)$ and solution
$\psi^n_{\ld,\tau_k}=\psi^n_{\ld(\tau_k),\tau_k}$ is the unique
subsonic solution to the free boundary problem \eqref{e1}. Then one
has
\be\label{e6}T_n(\tau_k)=\sup_{X\in\O^n_{\ld(\tau_k),\tau_k}}\left|\f{\g\psi^n_{\ld(\tau_k),\tau_k}(X)}{x}\right|-\Pi_{\ld(\tau_k)}\leq-4\e_n.\ee
By using the similar arguments in the proof of Lemma \ref{lb5}, we
can take a subsequence $\{\tau_k\}$, such that
$$\ld(\tau_k)\rightarrow \ld_0\leq\Pi_{\ld_0}-4\e_n,$$ and
$$\psi^n_{\ld(\tau_k),\tau_k}\rightarrow\psi^n_{\ld_0,m} \
\text{weakly in $H_{loc}^1(\O)$ and uniformly in any compact subset
of $\mathbb{R}^2$},$$ as $k\rightarrow+\infty$. Moreover, the inequality \eqref{e6} gives that
$$\lim_{\tau_k\rightarrow m}T_n(\tau_k)=\sup_{X\in\O^n_{\ld_0,m}}\left|\f{\g\psi^n_{\ld_0,m}(X)}{x}\right|-\Pi_{\ld_0}\leq-4\e_n.$$
Thus $\psi^n_{\ld_0,m}$ is a subsonic solution to the free boundary problem \eqref{e1}. Applying the uniqueness results in Section 4, we conclude that $\ld_0=\ld(m)$ and
$\psi_{\ld_0,m}^n=\psi_{\ld(m),m}^n\in\mathcal{K}_n(m)$. It follows from
the definition of $T_n(m)$ in \eqref{e2} and the uniqueness result in Section 4 that
$$T_n(m)=\lim_{\tau_k\rightarrow m^-}T_n(\tau_k).$$

\end{proof}

\begin{lem}\label{le2}
There exists a critical mass flux $m_{cr}>0$, such that for any
$m\in(0,m_{cr})$, there exist a unique $\ld=\ld(m)<\ld_{cr}$ and a
unique subsonic solution $\psi_{\ld,m}$ to the free boundary problem \eqref{e1}, such that
\be\label{e7}T(m)=\sup_{X\in\O_{\ld,m}}\left|\f{\g\psi_{\ld,m}(X)}{x}\right|-\Pi_\ld<0,\ee
where $\O_{\ld,m}=\O\cap\{0<\psi_{\ld,m}<m\}$. And $m_{cr}$ is the
upper critical mass flux for the existence of subsonic solution in the
following sense: either \be\label{e8}T(m)\rightarrow 0 \ \
\text{as}\ \ m\rightarrow m_{cr},\ee or there is no $\sigma>0$, such
that for any $m\in(m_{cr},m_{cr}+\sigma)$, there exist a
$\ld<\ld_{cr}$ and a subsonic solution $\psi_{\ld,m}$ to the free boundary problem \eqref{e1}, and
\be\label{e9}\sup_{m\in(m_{cr},m_{cr}+\sigma)}T(m)<0.\ee

\end{lem}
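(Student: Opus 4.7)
The plan is to combine the monotonicity $m_n \leq m_{n+1}$ (which follows from the inclusion $\Sigma_n \subset \Sigma_{n+1}$, since the bound $T_n(\tau) \leq -4\e_n$ is strictly stronger than $T_{n+1}(\tau) \leq -4\e_{n+1}$ and keeps the cutoff in $\rho^n$ inactive, so the same $\psi$ works for both truncations) with the definition $m_{cr} = \lim_{n\to\infty} m_n$ to obtain existence below $m_{cr}$, and then to force a contradiction with the supremum property of $m_n$ if both alternatives of the criticality statement were to fail.

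For the existence and uniqueness of a strictly subsonic solution below $m_{cr}$, I would fix $m \in (0, m_{cr})$ and choose $N = N(m)$ so large that $m < m_N$. By the definition \eqref{e3} of $m_N = \sup \Sigma_N$, I can pick $m' \in \Sigma_N$ with $m < m'$, so that $m \in (0,m') \subset \Sigma_N$, producing a subsonic solution $\psi^N_{\ld(m),m}$ of the truncated problem \eqref{e1} with $T_N(m) \leq -4\e_N$. The bound $|\nabla\psi^N_{\ld,m}/x| \leq \Pi_\ld - 4\e_N < \Pi_\ld - 2\e_N$ keeps the cutoff in $\rho^N$ inactive, so $\rho^N(\cdot;\ld^2) \equiv \rho(\cdot;\ld^2)$ on the range actually attained, and the resulting $\psi_{\ld,m} := \psi^N_{\ld(m),m}$ solves the original (uncut) equation \eqref{b8} with $T(m) \leq -4\e_N < 0$; Theorem \ref{ld2} then gives uniqueness of both $\ld = \ld(m)$ and $\psi_{\ld,m}$.

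For the criticality of $m_{cr}$ I would argue by contradiction, assuming both $T(m) \not\to 0$ as $m \to m_{cr}^-$ and the existence of $\sigma > 0$ with $\sup_{m \in (m_{cr}, m_{cr}+\sigma)} T(m) = -c' < 0$ together with a subsonic solution at each such $m$. The first assumption provides a sequence $m_k \uparrow m_{cr}$ with $T(m_k) \leq -c < 0$; combining the uniform gradient bound from Lemma \ref{lb3}, the Hausdorff convergence of free boundaries in the style of Lemma \ref{lc6}--Lemma \ref{lc7}, and Theorem \ref{ld2} to identify the limit, one extracts a subsonic solution at $m_{cr}$ with $T(m_{cr}) \leq -c$. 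A second application of the same compactness machinery to $m \to m_{cr}^\pm$, using the hypothesized existence on $(m_{cr}, m_{cr}+\sigma)$ and part 1 on the left, yields $\delta > 0$ with $T(m) \leq -c_0$ throughout $(m_{cr}-\delta, m_{cr}+\sigma)$, where $c_0 = \tfrac12\min(c,c')$. On the left interval $(0, m_{cr}-\delta]$, fixing $K$ with $m_K > m_{cr} - \delta$, the bound from part 1 gives $T(\tau) \leq -4\e_K$ uniformly, a constant independent of $\tau$.

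Now fix $n$ so large that $4\e_n < c_0$ and $n > K$, so that $-4\e_K < -4\e_n$. These two bounds combine to yield $T(\tau) \leq -4\e_n$ on all of $(0, m_{cr} + \sigma/2)$, and in this range the cutoff in $\rho^n$ is inactive, so $\psi_{\ld,\tau} \in \mathcal{K}_n(\tau)$ with $T_n(\tau) = T(\tau) \leq -4\e_n$. Hence $m_{cr} + \sigma/2 \in \Sigma_n$, forcing $m_n \geq m_{cr}+\sigma/2$, which contradicts $m_n \uparrow m_{cr}$. The main obstacle I expect is the compactness/stability passage: pushing a sequence of uniformly subsonic minimizers with varying mass fluxes $m_k$ through a limit, identifying the limit via Theorem \ref{ld2}, and then upgrading a single pointwise bound $T(m_{cr}) \leq -c$ to a uniform bound on a two-sided neighborhood of $m_{cr}$ requires a careful combination of the interior gradient estimates, the analyticity of the free boundary near the detachment point, and the hypothesized one-sided existence past $m_{cr}$.
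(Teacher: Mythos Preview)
Your overall strategy matches the paper's: establish existence on $(0,m_{cr})$ by choosing $N$ with $m<m_N$ and removing the cutoff, then contradict $m_n\uparrow m_{cr}$ by exhibiting some $m_{cr}+\sigma'\in\Sigma_n$ under the simultaneous negation of both alternatives. The paper, however, takes a shorter route that avoids your compactness passage entirely. It reads the negation of \eqref{e8} as $\sup_{m\in(0,m_{cr})}T(m)<0$, picks $n$ with $-4\e_n$ above this supremum so that $m_{cr}\in\Sigma_n$ and hence $m_{cr}\le m_n$, and then invokes Lemma~\ref{le1} (left-continuity of $T_n$) to get $T_n(m_{cr})\le-4\e_n$ at the endpoint. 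Combining this with the hypothesized bound on $(m_{cr},m_{cr}+\sigma)$ immediately yields $m_{cr}+\sigma\in\Sigma_{n+k}$, the contradiction. No fresh compactness argument is needed because Lemma~\ref{le1} has already packaged it.

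The step you flag as the main obstacle is a genuine gap in your route. The literal negation of \eqref{e8} supplies only a \emph{single} sequence $m_k\uparrow m_{cr}$ with $T(m_k)\le-c$; to upgrade this to $T(m)\le-c_0$ on a full left-neighborhood $(m_{cr}-\delta,m_{cr})$ via compactness you would have to pass to limits along \emph{arbitrary} sequences $m_j\to m_{cr}^-$. For those you know only $T(m_j)<0$, and if $T(m_j)\to 0$ along some such sequence the solutions approach sonic speed, the ellipticity constant $\vartheta$ in the gradient bound of Lemma~\ref{lb3} degenerates (it depends on the truncation level $\t\e$), and the compactness machinery of Lemma~\ref{lc6} loses its uniformity---so the argument becomes circular. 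The paper sidesteps this by building the uniform bound into its working hypothesis; your compact-interval observation $T\le-4\e_K$ on $(0,m_{cr}-\delta]$ is exactly what justifies reading the negation of \eqref{e8} as a uniform bound on all of $(0,m_{cr})$, after which Lemma~\ref{le1} handles the remaining endpoint $m_{cr}$ without further work.
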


\begin{proof} For any $m\in(0,m_{cr})$, the definition of $m_{cr}$ in \eqref{e5} gives that there exists a $N$, such
that $m<m_n$ for any $n>N$. Therefore, it follows from the definition of $m_n$
that
 we have
$$T_n(m)=\inf_{\psi^n_{\ld,m}\in
\mathcal{K}_n(m)}\left(\sup_{X\in\O_{\ld,m}}\left|\f{\g\psi^n_{\ld,m}(X)}{x}\right|-\Pi_\ld\right)\leq
-4\e_n\ \ \text{for}\ \ n>N.$$ By virtue of Theorem \ref{ld1}, we can conclude that there exist a unique $\ld(m)\leq
\Pi_{\ld(m)}-4\e_n$ and a unique subsonic solution $\psi^n_{\ld(m),m}$ to the
free boundary problem \eqref{e1}, such that
$$T_n(m)=\sup_{X\in\O_{\ld(m),m}}\left|\f{\g\psi^n_{\ld(m),m}(X)}{x}\right|-\Pi_{\ld(m)}\leq
-4\e_n.$$ Taking $\psi_{\ld,m}=\psi^n_{\ld(m),m}$, then $\psi_{\ld,m}$
is the unique subsonic solution to the compressible impinging
jet flow problem \eqref{e1} and  $T(m)=T_n(m)\leq -4\e_n<0$.

If $\sup_{m\in(0,m_{cr})}T(m)<0$, there exists a large $N$, such
that \be\label{e11}\sup_{m\in(0,m_{cr})}T(m)<-4\e_n\ee for any $n>N$. It is easy to
check that $m_{cr}\in\Sigma_n$, and thus $m_{cr}\leq m_n$ for any
$n>N$.

It follows from Lemma \ref{le1} that $T_n(m)$ is left-continuous for
$m\in(0,m_n]$, and thus \be\label{e12}T(m_{cr})=T_n(m_{cr})\leq -4\e_n.\ee Suppose that there
exists a $\sigma>0$, such that for any $m\in(m_{cr},m_{cr}+\sigma)$,
there exists a subsonic solution $\psi_{\ld,m}$ with $\ld<\ld_{cr}$ to the
free boundary problem \eqref{e1} and
\be\label{e10}\sup_{m\in(m_{cr},m_{cr}+\sigma)}T(m)=\sup_{m\in(m_{cr},m_{cr}+\sigma)}
\left(\sup_{X\in\O_{\ld,m}}\left|\f{\g\psi_{\ld,m}(X)}{x}\right|-\Pi_\ld\right)<0.\ee
In view of \eqref{e10}, there exists a large $k>0$, such that
\be\label{e13}\sup_{m\in(m_{cr},m_{cr}+\sigma)}T(m)=
\sup_{m\in(m_{cr},m_{cr}+\sigma)}\left(\sup_{X\in\O_{\ld,m}}\left|\f{\g\psi_{\ld,m}(X)}{x}\right|-\Pi_\ld\right)\leq-4\e_{n+k}.\ee
By virtue of \eqref{e11}, \eqref{e12} and \eqref{e13}, one has
$$T_{n+k}(m_{cr}+\sigma)=T(m_{cr}+\sigma)\leq\sup_{m\in(0,m_{cr}+\sigma)} T(m)\leq 4\e_{n+k},$$ for any $n>N$,
which implies that $m_{cr}+\sigma\in\Sigma_{n+k}$. The definition of
$m_{n+k}$ in \eqref{e3} gives that $$m_{n+k}\geq
m_{cr}+\sigma>m_{cr}.$$ This leads a contradiction to the definition
of $m_{cr}$ in \eqref{e5}.

\end{proof}

\section{The proof of the main results}

Based on the previous sections, we will complete the proof of Theorem \ref{the1} and Theorem \ref{the2} in this section.

{\bf Proof of Theorem \ref{the1}.} For any given atmosphere pressure $p_{atm}>0$, it follows from  Lemma \ref{le2} that there exists a critical mass flux $M_{cr}>0$, such that for any $M_0\in(0,M_{cr})$, there exist a unique $\ld=\ld(m_0)<\ld_{cr}$ and a
unique subsonic solution $(\psi_{\ld,m_0},\Gamma_{\ld,m_0})$ to the axially symmetric compressible impinging flow, where $$M_{cr}=2\pi m_{cr}\ \ \text{and}\ \ M_0=2\pi m_0.$$ In view of the proof of Theorem \ref{ld1}, we conclude that $\psi_{\ld,m_0}$ and the free boundary $\Gamma_{\ld,m_0}:y=k_{\ld,m_0}(x)$ satisfy that
$$\psi_{\ld,m_0}\in C^{2,\alpha}(\O_0)\cap C^1(\bar\O_0),\ k_{\ld,m_0}(x)\in C^1([b,+\infty)),$$ and $$k_{\ld,m_0}(b+0)=1,~
~~k_{\ld,m_0}^{'}(b+0)=g'(b-0),\ k'_{\ld,m_0}(x)\rightarrow \tan\theta,~~~
k_{\ld,m_0}(x)-g_0(x)\rightarrow0$$ as $x\rightarrow+\infty$.
Moreover, $\sup_{(x,y)\in\bar{\O}_0}\f{|\g\psi_{\ld,m_0}|}{x\Pi_\ld}<1$ and $p=p_{atm}$ on $\Gamma_{\ld,m_0}$.

By virtue of the Bernoulli's law \eqref{a12}, one has $$\f{M^2_0}{2\pi a^4\rho^2_{in}}+\f{\mathcal{A}\gamma}{\gamma-1}\rho_{in}^{\gamma-1}
=\f{\ld^2}{2\rho_0^2}+\f{\mathcal{A}\gamma}{\gamma-1}\rho_0^{\gamma-1},\ \ \rho_0=\left(\f{p_{atm}}{\mathcal{A}}\right)^{\f1\gamma},$$
 which implies that the incoming pressure $p_{in}=A\rho^\gamma_{in}$ is determined uniquely by $\ld$. Moreover, the subsonicity of solution $\psi_{\ld,m_0}$ gives that $p_{in}\in(p_1,p_2)$, where $p_1$ and $p_2$ satisfy that $p_1<p_2$ and
$$\f{M^2_0}{2\pi^2a^4\left(\f{p_1}{\mathcal{A}}\right)^{\f2\gamma}}+\f{\mathcal{A}\gamma}{\gamma-1}\left(\f{p_1}{\mathcal{A}}\right)^{\f{\gamma-1}\gamma}
=\f{\mathcal{A}\gamma}{\gamma-1}\left(\f{p_{atm}}{\mathcal{A}}\right)^{\f{\gamma-1}{\gamma}}$$ and
$$\f{M^2_0}{2\pi^2a^4\left(\f{p_2}{\mathcal{A}}\right)^{\f2\gamma}}+\f{\mathcal{A}\gamma}{\gamma-1}\left(\f{p_2}{\mathcal{A}}\right)^{\f{\gamma-1}\gamma}
=\f{\mathcal{A}\gamma(\gamma+1)}{2(\gamma-1)}\left(\f{p_{atm}}{\mathcal{A}}\right)^{\f{\gamma-1}{\gamma}}.$$

Denote
 $$u=\f{1}{x\rho(\f{|\g\psi_{\ld,m_0}|^2}{|x|^2};\ld)}\f{\p\psi_{\ld,m_0}}{\p y}, v=-\f{1}{x\rho(\f{|\g\psi_{\ld,m_0}|^2}{|x|^2};\ld)}\f{\p\psi_{\ld,m_0}}{\p x},\ \ \Gamma=\O\cap\{\psi_{\ld,m_0}<m_0\}$$ and the density $\rho$ is determined uniquely by $$\f{|\g\psi_{\ld,m_0}|^2}{2x^2\rho^2}+\f{\mathcal{A}\gamma}{\gamma-1}\rho^{\gamma-1}
=\f{\ld^2}{2\rho_0^2}+\f{\mathcal{A}\gamma}{\gamma-1}\rho_0^{\gamma-1}.$$
Thus, $(u,v,\rho,\Gamma)$ satisfies the conditions (1)-(3) in Definition \ref{def1}, and $(u,v,\rho,\Gamma)$ is the unique subsonic solution to FBP.

The statements (1)-(3) of Theorem \ref{the1} follows from the Step 2 and Step 4 in the proof of Theorem \ref{ld1} directly. The final statement (4) of Theorem \ref{the1} is proved in Section 5.

Hence, we complete the proof of Theorem \ref{the1}.

{\bf Proof of Theorem \ref{the2}.} By virtue of Theorem \ref{the1}, the subsonic solution $(\psi_{\ld,m_0},\Gamma_{\ld,m_0})$ established in Theorem \ref{the1} satisfies that
\be\label{lll2}\text{$\g\psi_{\ld,m_0}$ is uniformly continuous in a $\{\psi_{\ld,m_0}<m_0\}$-neighborhood of $A$,}\ee and \be\label{lll3}\text{$\Gamma_{\ld,m_0}\cup N$ is $C^1$ at $A$.}\ee
Under the assumption that $N$ is $C^{1,\alpha}$ near $A$, with the aid of \eqref{lll2} and \eqref{lll3}, the proof of Theorem \ref{the2} follows from Theorem 1.1 in \cite{ACF6} directly.

\subsection*{Acknowledgments}
The authors would like to acknowledge the two anonymous reviewers
for improving this paper with their comments. Cheng  was supported
in part by NSFC grant 12001387 and the Fundamental Research Funds
for the Central Universities No. YJ202046. Du was supported by NSFC
grant 11971331. Zhang was supported by NSFC grant 12001071.

\end{document}